\newtheorem{assumption}{Assumption}
\newcommand{\ve}[2]{\langle #1 ,  #2 \rangle}
\newcommand{\eqdef}{:=}
\newcommand{\R}{\mathbb{R}}
\newcommand{\kp}{_{k+1}}
\newcommand{\tnorm}[1]{\lVert{#1}\rVert^2}
\newcommand{\prox}[2]{\mbox{prox}_{h,#2}\left(#1\right)}
\title{Fast and Safe: Accelerated gradient methods with optimality certificates and underestimate sequences}
\titlerunning{Fast and Safe: Accelerated gradient methods}        
\author{Majid Jahani\and
	 Naga~Venkata~C.~Gudapati \and
	  Chenxin Ma\and
	Rachael Tappenden \and Martin Tak\'a\v{c}
}
\institute{
M.~Jahani, N.~V.~C.~Gudapati, C.~Ma, M.~Tak\'a\v{c} \at
Department of Systems and Industrial Engineering, Lehigh University, H.S. Mohler Laboratory, 200 West Packer Avenue, Bethlehem, PA 18015, USA.\\
\email{maj316@lehigh.edu,nag415@lehigh.edu, chm514@lehigh.edu, Takac.MT@gmail.com}
	\and
R.~Tappenden \at
School of Mathematics and Statistics, University of Canterbury, Private Bag 4800, Christchurch 8140, New Zealand.
\email{rachael.tappenden@canterbury.ac.nz}
	\and
	Martin Tak\'a\v{c} was supported by NSF Grants CCF-1618717 and CMMI-1663256.
}
\date{Received: date / Accepted: date}
\begin{document}

\maketitle

\begin{abstract}
	In this work we introduce the concept of an Underestimate Sequence (UES), which is motivated by Nesterov's estimate sequence. Our definition of a UES utilizes three sequences, one of which is a lower bound (or under-estimator) of the objective function. The question of how to construct an appropriate sequence of lower bounds is addressed, and we present lower bounds for strongly convex smooth functions and for strongly convex composite functions, which adhere to the UES framework. Further, we propose several first order methods for minimizing strongly convex functions in both the smooth and composite cases. The algorithms, based on efficiently updating lower bounds on the objective functions, have natural stopping conditions that provide the user with a certificate of optimality. Convergence of all algorithms is guaranteed through the UES framework, and we show that all presented algorithms converge linearly, with the accelerated variants enjoying the optimal linear rate of convergence.
	\keywords{Underestimate Sequence \and Estimate Sequence \and Quadratic Averaging \and Lower bounds \and Strongly convex \and Smooth minimization \and Composite minimization \and Accelerated Algorithms}
	 \subclass{90C25 \and 90C47 \and 68Q25}
\end{abstract}

\section{Introduction}

In this work we are interested in solving the strongly convex, composite, optimization problem,
\begin{equation}\label{eq:Problem}
	\min_{x\in \R^n} \{F(x) \eqdef f(x) + h(x)\}.
\end{equation}
We use $x^*$ to denote the optimal solution of \eqref{eq:Problem}, and $F^*:= F(x^*)$ to denote the associated optimal function value. It is assumed that $h$ is a convex and possibly nonsmooth function.
\begin{assumption}\label{A_SCL}
The function $f(\cdot)$ is $\mu$-strongly convex and $L$-smooth, i.e., for all $x,y\in\R^n$, it holds that
\begin{align}
	f(x) \geq f(y) + \ve{\nabla f(y)}{x-y} + \tfrac{\mu}{2} \tnorm{x-y}, \label{eq:ass1} \\
	f(x) \leq f(y) + \ve{\nabla f(y)}{x-y} + \tfrac{L}{2} \tnorm{x-y}. \label{eq:ass2}
\end{align}
\end{assumption}
In words, Assumption~\ref{A_SCL} explains that $f$ is assumed to be continuously differentiable on $\R^n$ with $L\geq \mu>0$.
It is straightforward to show that strong convexity of $f$ implies strong convexity of $F$.

For problems of the form \eqref{eq:Problem}, which satisfy Assumption~\ref{A_SCL}, it is well known that Nesterov's methods \cite{Nesterov04,Nesterov07,Nesterov13} converge linearly, with the accelerated variants converging at the optimal rate of $(1-\sqrt{\mu/ L})$.

Nesterov's acceleration approach, and the idea of adding momentum, has led to the extensive analysis of accelerated first order methods in a variety of settings. This includes a recent surge of interest in investigating stochastic gradient methods \cite{robbins1951stochastic,schmidt2017minimizing,johnson2013accelerating,Ghadimi12} and their accelerated variants \cite{cotter2011better,shalev2013accelerated,kingma2014adam,nitanda2014stochastic}. Coordinate descent methods \cite{nesterov2012efficiency,richtarik2014iteration,Fountoulakis18,Tappenden16} are another class of algorithms that have proved extremely popular, largely because they can take advantage of modern parallel computing architecture \cite{jaggi2014communication,ma2015adding}, and this has also inspired much research into studying their accelerated versions \cite{fercoq2015accelerated,allen2016even,ACOCOA}.
However, while the theoretical and practical performance of Nesterov's methods is well established, a satisfactory geometric interpretation of these approaches has been elusive.

Recently the authors of \cite{Bubeck15,Drusvyatskiy16} proposed algorithms for smooth functions (i.e., $h \equiv 0$ in \eqref{eq:Problem}) that enjoy the same optimal rate of convergence as Nesterov's accelerated method, but also have a novel geometric intuition. Specifically, the geometric descent algorithm \cite{Bubeck15} achieves the optimal linear convergence rate, and shares a geometric intuition similar to that of ellipsoidal methods. The authors illustrate that the optimal rate is obtained by appropriately shrinking two balls that contain $x^*$ (the minimizer of $f(x)$) at each iteration.

Motivated by \cite{Bubeck15}, the paper \cite{Drusvyatskiy16} proposed the Optimal Quadratic Averaging (OQA) algorithm. Indeed, \cite{Drusvyatskiy16} show that their OQA algorithm generates the same iterate sequence as that generated by the algorithm in \cite{Bubeck15}, although the two schemes are slightly different. The OQA algorithm also maintains a sequence of quadratic lower bounds on the objective function, and at each iteration the new quadratic lower bound is formed as the optimal average of the current lower bound and the lower bound from the previous iteration. The gap between the function value $f(x_k)$ and the minimum value of lower bound, say $\phi_k^*$, converges to zero at the optimal rate. Importantly, the lower bound also acts a natural stopping criterion for the algorithm, and when $f(x_k) - \phi_k^* \leq \epsilon$, where $\epsilon >0$ is some stopping tolerance, then the user has a certificate of $\epsilon$-optimality, i.e., it is guaranteed that $f(x_k) - f^* \leq \epsilon$. In practice, the OQA algorithm can be equipped with historical information to achieve further speed up. However, the OQA algorithm and its history based variant need at least two calls of a line search process at every iteration, which can pose a heavy computational burden in terms of function evaluations. The authors in \cite{Drusvyatskiy16} also briefly describe how their \emph{unaccelerated OQA algorithm} can be extended to composite functions, and left as an open problem the possibility of deriving \emph{accelerated} proximal variants.

More recently, the authors of \cite{Chen16} successfully addressed the open problem in \cite{Drusvyatskiy16} and presented an accelerated algorithm for composite problems of the form \eqref{eq:Problem}, that achieves the optimal linear rate of convergence. Their algorithm, called the geometric proximal gradient (GeoPG) method also has a satisfying geometrical interpretation similar to that in \cite{Bubeck15}. Unfortunately, a major drawback of GeoPG in \cite{Chen16} is that the algorithm is rather complicated, and requires a couple of inner loops to determine necessary algorithm parameters. For example, for GeoPG one must find the root of a specific function and one is also required to compute a minimum enclosing ball via some iterative process; both of these steps must be carried out at every iteration, which is expensive.

Another relevant work is that in \cite{Ghadimi12} where the authors propose an accelerated stochastic approximation algorithm. That algorithm is based upon a modification of Nesterov's optimal smooth method \cite{Nesterov83} to fit a convex composite setting where only a noisy gradient is available. Section~5 of \cite{Ghadimi12} describes how certain stochastic lower bound can be incorporated into their method, although some additional computational effort is required to compute these. \textcolor{black}{Furthermore, in \cite{diakonikolas2019} the authors provide a general scheme for the analysis of first-order methods by construction of a `duality gap' involving an approximation of the objective function. In the continuous-time setting, the authors in \cite{diakonikolas2019} show that the approximate duality gap decreases. They also characterize the discretization errors incurred by different discretization methods (please see \cite{diakonikolas2019} for more details).}

In this paper we propose several new algorithms to solve problem \eqref{eq:Problem} that are motivated by, and extend, the previously mentioned works. In particular, we present four algorithms: a proximal Gradient Descent (GD) type algorithm for composite problems, an accelerated proximal GD type algorithm for composite problems, a GD type algorithm for smooth problems, and an accelerated GD type algorithm for smooth problems. Our algorithms all converge linearly, and the accelerated variants converge at the optimal linear rate. These algorithms blend the positive features of Nesterov's methods \cite{Nesterov04,Nesterov07,Nesterov13} and the OQA algorithm \cite{Drusvyatskiy16}, and thus enjoy the advantages of both approaches. First, similarly to Nesterov's methods, no line search is needed by any of our algorithms as long as we make the standard assumption that the Lipschitz constant $L$ is known or is easily computable. Hence, there are no `inner-loops' in any of our algorithm variants, which ensures that the computational cost is low and is fixed at every iteration. Secondly, our algorithms incorporate quadratic lower bounds so they have natural stopping conditions; a feature that is similar to OQA. Each iteration of the OQA method in \cite{Drusvyatskiy16} requires two `optimal steps': an optimal line-search in a given direction and the optimal choice of the averaging of successive quadratics. In this work, we show that instead of making such optimal choices, one can use carefully coupled convex combinations in the two steps, and this approach is computationally cheaper. The resulting scheme maintains the optimal linear rate of convergence and naturally extends to the proximal setting.

Furthermore, in this work we propose the definition of an UnderEstimate Sequence (UES), which was motivated by Nesterov's Estimate Sequence \cite{Nesterov83}. Perhaps surprisingly, early work on estimate sequences appeared to be largely overlooked, but since Nesterov's work on smoothing techniques in the early 2000s \cite{Nesterov05}, they have seen a revival in popularity. For example, the work of Baes in \cite{Baes09}, the development of a randomized estimate sequence in \cite{Lu15} and an approximate estimate sequence in \cite{Lin15}.
The definition we introduce is different from previous works because an underestimate sequence involves \emph{lower bounds} on the objective function. To the best of our knowledge, this is the first work which proposes estimate sequences that form \emph{lower bounds} on the objective function.
The UES definition is the powerhouse of our convergence analysis; we prove that each of our proposed algorithms generates a UES, and consequently the algorithms converge (linearly) to the optimal solution of problem \eqref{eq:Problem}. While we describe 4 new algorithms in this work, we stress that the UES definition is general, and it allows a plethora of algorithms to be developed. Moreover, any developed algorithm whose iterates generate a UES is guaranteed to converge to the optimal value $F^*$ (under a mild assumption on one of the algorithm parameters).

The algorithms presented in this work can be used to solve problem \eqref{eq:Problem}. However, they are also more widely applicable as a subproblem solver in existing optimization algorithms. In particular, the Inexact Coordinate Descent algorithm \cite{Tappenden2016}, and the Universal Catalyst algorithm \cite{Lin15}, are guaranteed to converge if at every iteration, the inexact solution to a certain subproblem is $\epsilon$-optimal. In both cases, the algorithms solve optimization problems where the objective function is convex (but not necessarily strongly convex) \emph{but the arising subproblems are strongly convex}. Neither paper explains how to verify the subproblem stopping conditions. This work builds the link, bridging theory and practice, by providing algorithms that can be used to solve subproblems and which return solutions that are easily verified to be $\epsilon$-optimal.

\subsection{Contributions}
The main contributions of this paper are stated now (listed in no particular order).
\begin{itemize}
\item \textbf{Underestimate Sequence.} We introduce the concept of an UnderEstimate Sequence (UES).
    The UES consists of three sequences $\{x_k\}_{k=0}^\infty$, $\{\phi_k(x)\}_{k=0}^\infty$ and $\{\alpha_k\}_{k=0}^\infty$, where for all $k$, $\phi_k(x)$ is a \emph{global lower bound} on the objective function $F(x)$. While there have been several extensions and variants of Nesterov's work on estimate sequences \cite{Nesterov83}, the definition of a UES involves \emph{lower bounds} or \emph{under}estimates of $F(x)$, which is new. The UES framework is general, conceptually simple, and it allows the construction of a wide variety of algorithms to solve \eqref{eq:Problem}.
\item \textbf{New algorithms.} Four new algorithms are presented that are computationally efficient and whose iterates generate a UES. Crucially, two of our algorithms solve the \emph{composite} problem \eqref{eq:Problem}. The algorithms are: (i) CUESA, a proximal GD type algorithm for composite problems; (ii) ACUESA, an accelerated proximal GD type algorithm for composite problems; (iii) SUESA, a GD type algorithm for smooth problems; and (iv) ASUESA, an accelerated GD type algorithm for smooth problems.
\item \textbf{Algorithms with optimal convergence rate.} Each of the four algorithms generate iterates that form a UES, and all are guaranteed to converge to the optimal solution of \eqref{eq:Problem}. Moreover, all algorithms converge linearly, and the accelerated algorithms (ACUESA and ASUESA) converge \emph{at the optimal rate.}
\item \textbf{Algorithms with convergence certificates.}  The underestimate sequence builds a global lower bound of $F(x)$ at each iteration, and the gap between the (minimum of the) lower bound and $F(x_k)$ tends to zero. Thus, this difference acts as a kind of surrogate ``duality gap'', and once this gap falls below some (user defined) stopping tolerance $\epsilon$, it is guaranteed that the point returned by the algorithm is $\epsilon$-optimal. The algorithms can be used as subproblem solvers within existing algorithms, to ensure that the solutions to the subproblems are $\epsilon$-accurate.
\item \textbf{No line search.} The algorithms developed in this work are computationally efficient and do not involve any `inner loops'. In contrast, the methods in \cite{Bubeck15,Drusvyatskiy16,Chen16} all involve an exact linesearch or a root finding process to determine necessary algorithmic parameters, which comes with an additional computational cost.
\end{itemize}

\subsection{Paper Outline}
The paper is organized as follows. In the next section the definition of an Underestimate Sequence (UES) is presented, along with a proposition which shows that if one has a UES, then, under a mild assumption, it is guaranteed that $F(x_k) - F^* \to 0$ linearly. Section~\ref{sec:lb} discusses lower bounds for function $F$ (in both the smooth and composite cases), and these lower bounds are a critical part of the underestimate sequences framework. In Section~\ref{sec:composite} we propose two algorithms for solving \eqref{eq:Problem} in the composite case ($h\neq 0$) and in Section~\ref{sec:smooth} we present two algorithms for solving \eqref{eq:Problem} in the smooth case ($h\equiv0$). The algorithms are supported by convergence theory, which shows that they are all guaranteed to converge linearly, and the accelerated algorithms achieve the optimal rate. In Section~\ref{sec:adaptiveL} an adaptive Lipschitz constant updating scheme is presented that can be used as an inner loop within the four previously mentioned algorithms, so that the true Lipschitz constant is not explicitly needed. Section~\ref{sec:numericalexperiments} presents numerical experiments to demonstrate the practical advantages of our proposed algorithms, and concluding remarks are given in Section~\ref{sec:conclusion}.

\section{Underestimate Sequence}\label{sec:UES}

In this section, the definition of an Underestimate Sequence (UES) is presented, as well as a proposition showing that if one has a UES then $F(x_k) - F^* \to 0$.

\begin{definition}\label{def:UES}
	A series of sequences $\{x_k\}_{k=0}^\infty$, $\{\phi_k(x)\}_{k=0}^\infty$ and $\{\alpha_k\}_{k=0}^\infty$, where $\alpha_k\in (0,1)$ for all $k\geq0$, 
is called an Underestimate Sequence (UES) of the function $F(x)$ if, for all $x\in\R^n$ and for all $k\geq 0$ we have,
	\begin{align}
		\phi_k(x) &\leq F(x),  \label{eq:def1}\\
		F(x_{k+1}) -\phi_{k+1}^* &\leq (1-\alpha_k) (F(x_{k}) - \phi_{k}^*), \label{eq:def2}
	\end{align}
	where $\phi_k^* := \min_x \phi_k(x)$.
\end{definition}

\begin{proposition}\label{prop:UES}
	If $\{x_k\}_{k=0}^\infty$, $\{\phi_k(x)\}_{k=0}^\infty$ and $\{\alpha_k\}_{k=0}^\infty$ is a UES of $F(x)$, then
	\begin{equation}\label{eq:linconverge}
		F(x_k) -\phi_k^* \leq \lambda_k(F(x_0) - \phi_0^*),
	\end{equation}
	where $\lambda_0 = 1$ and $\lambda_{k+1} =  (1-\alpha_k)\lambda_k$. 
Furthermore, since $\phi_k^*\leq \phi_k(x^*) \leq F^*$ for all $k\geq0$, if $\lambda_k \to 0$, then \eqref{eq:linconverge} implies that $\{F(x_k) - F^*\}_{k=0}^\infty$ converges to $0$.
\end{proposition}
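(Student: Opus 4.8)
The statement splits into two parts, and the plan is to handle \eqref{eq:linconverge} by a short induction on $k$ built entirely from the contraction inequality \eqref{eq:def2}, and then to obtain the convergence claim by sandwiching $F(x_k)-F^*$ between $0$ and $\lambda_k(F(x_0)-\phi_0^*)$ via the lower-bound property \eqref{eq:def1}. Nothing else is needed; note in particular that the first part does not invoke \eqref{eq:def1} at all.

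For \eqref{eq:linconverge} I would induct on $k$, interpreting $\lambda_k$ through the recursion $\lambda_0=1$, $\lambda_{k+1}=(1-\alpha_k)\lambda_k$ (an empty product at $k=0$). The base case $k=0$ is the trivial identity $F(x_0)-\phi_0^*=\lambda_0(F(x_0)-\phi_0^*)$. Assuming \eqref{eq:linconverge} at index $k$, apply \eqref{eq:def2} and then the inductive hypothesis:
\[
F(x_{k+1})-\phi_{k+1}^* \;\le\; (1-\alpha_k)\bigl(F(x_k)-\phi_k^*\bigr)\;\le\;(1-\alpha_k)\lambda_k\bigl(F(x_0)-\phi_0^*\bigr)\;=\;\lambda_{k+1}\bigl(F(x_0)-\phi_0^*\bigr),
\]
which closes the induction. (It is worth recording, from \eqref{eq:def1} at $x=x_k$, that $\phi_k^*=\min_x\phi_k(x)\le\phi_k(x_k)\le F(x_k)$, so the quantity being contracted stays nonnegative and the recursion is consistent.)

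For the convergence claim, evaluate \eqref{eq:def1} at the minimizer $x=x^*$ to get $\phi_k(x^*)\le F(x^*)=F^*$, hence $\phi_k^*\le\phi_k(x^*)\le F^*$ for all $k$. Together with $F(x_k)\ge F^*$ and \eqref{eq:linconverge} this yields
\[
0\;\le\;F(x_k)-F^*\;\le\;F(x_k)-\phi_k^*\;\le\;\lambda_k\bigl(F(x_0)-\phi_0^*\bigr),
\]
and since each $1-\alpha_k\in(0,1)$ the sequence $\{\lambda_k\}$ is non-increasing and contained in $(0,1]$; once $\lambda_k\to0$ the squeeze forces $F(x_k)-F^*\to0$, and linearly when the $\lambda_k$ decay geometrically.

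The sole point that needs a word of care — the ``main obstacle,'' such as it is — is the step $\lambda_k\to0$: an infinite product of factors in $(0,1)$ can have a strictly positive limit, so in full generality one needs $\sum_k\alpha_k=+\infty$ (equivalently $\lambda_k\to0$) in addition to $\alpha_k\in(0,1)$. This holds automatically in every algorithm constructed later in the paper, where $\alpha_k$ is a fixed positive constant and the $\lambda_k$ therefore decay geometrically — which is precisely why the conclusion is stated as linear convergence. Beyond that remark, the proof is a one-line telescoping argument.
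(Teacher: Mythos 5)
Your argument is correct, and it is the natural one: the paper itself states Proposition~\ref{prop:UES} without proof, so there is nothing to diverge from --- the induction on \eqref{eq:def2} followed by the squeeze $0\le F(x_k)-F^*\le F(x_k)-\phi_k^*$ via \eqref{eq:def1} is exactly what the authors implicitly intend. Your reading of $\lambda_k$ as $\prod_{i=0}^{k-1}(1-\alpha_i)$ with $\lambda_0=1$ is the right one (the paper's formula $\prod_{i=0}^{k}(1-\alpha_k)$ has an index typo that would break the base case). Your caveat is also well taken and is really a gap in the \emph{statement} rather than in your proof: $\alpha_k\in(0,1)$ alone does not force $\lambda_k\to0$, so the final convergence claim tacitly assumes $\sum_k\alpha_k=\infty$; this is harmless here because every algorithm in the paper uses a constant $\alpha_k\in\{\mu/L,\sqrt{\mu/L}\}$, making $\lambda_k$ decay geometrically.
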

\begin{remark}
  Note that if $\sum_{k=0}^\infty \alpha_k = \infty$ then $\lambda_k\to 0$; also see \cite[pg.72--73]{Nesterov04}.
\end{remark}


Definition~\ref{def:UES} is different from Nesterov's Estimate Sequence (ES) in several ways. Although both a UES and an ES contain a sequence of estimators $\{\phi_k(x)\}_{k=0}^\infty$ for $F$, Definition~\ref{def:UES} shows that $\phi_k$ must be a \emph{lower/under estimator} of $F$ for all $k\geq0$, but this necessarily does not hold for an ES. Nesterov's ES convergence guarantees rely upon $F(x_{k}) \leq \phi^*_{k}$ holding, but this \emph{does not hold} in our case. Moreover, the definition of an ES only contains two sequences, while the UES has an extra sequence of points $\{x_k\}_{k=0}^\infty$. This enables us to show that the gap between the function value at $x_k$ and $\phi_k^*$ decreases in the $k$th iteration. The quantities in Definition~\ref{def:UES} and Proposition~\ref{prop:UES} are all computable; they do not require knowledge of $x^*$. (A more detailed comparison can be found in Appendix~\ref{sec:appendix}.)

Proposition~\ref{prop:UES} shows that, if $\lambda_k\to 0$ (where $\lambda_{k+1} = \prod_{j=0}^{k}(1-\alpha_j)$), then any sequences that form a UES (i.e., satisfy Definition~\ref{def:UES}) are guaranteed to converge to the optimal solution of problem \eqref{eq:Problem}, and $F(x_k) -\phi_k^* \to 0$. Additionally, if $\alpha_k \in (0,1)$ is constant for all $k\geq 0$, then a linear rate of convergence holds. Thus, the UES construction provides a general framework for determining whether an optimization algorithm for problem \eqref{eq:Problem} will converge (linearly). In particular, if the iterates generated by an optimization algorithm satisfy Definition~\ref{def:UES} and $\lambda_k \to 0$, then that algorithm converges.

The UES framework is not only interesting from a theoretical perspective, but it also has an important practical advantage. In particular, $F(x_k) -\phi_k^*$ provides a natural stopping criterion when designing algorithms, because $F(x_k)$ and $\phi_k^*$ are upper and lower bounds for $F^*$, respectively. This difference is a kind of surrogate for the duality gap, so algorithms that adhere to the UES framework are provided with a certificate of optimality, which is a desirable attribute.

%
%

\section{Lower Bounds via Quadratic Averaging}\label{sec:lb}

The purpose of this section is to introduce (global) lower bounds for the function $F$ defined in \eqref{eq:Problem}, in both the smooth ($h=0$) and nonsmooth cases. Lower bounds are the cornerstone of the UES set up, as seen in \eqref{eq:def1} in Definition~\ref{def:UES}. The efficient construction of global lower bounds for $F$ allows the development of practical algorithms whose convergence can be analyzed via the UES framework.

Before stating the lower bounds, several technical results are presented that will be used throughout this paper.

\subsection{Preliminary Technical Results}

The proximal map is defined as
\begin{equation}\label{eq:proxmap}
\prox{x}{\gamma} \eqdef \arg\min_{u} \{ h(u) + \tfrac \gamma 2 \tnorm{x-u} \},
\end{equation}
and the proximal gradient is
\begin{equation}\label{eq:proxgrad}
	G_\gamma(x) \eqdef \gamma\left(x - \prox{x- \tfrac{1}{\gamma} \nabla f(x) }{\gamma}\right).
\end{equation}
Definitions \eqref{eq:proxmap} and \eqref{eq:proxgrad} will be used with $\gamma \equiv L$. Given some point $x\in \R^n$, a short step and a long step are denoted by
\begin{eqnarray}
    x^{+}  &\eqdef& x - \tfrac{1}{L} G_L(x),\label{eq:shortproxstep}\\
	x^{++} &\eqdef& x - \tfrac{1}{\mu} G_L(x). \label{eq:longproxstep}
\end{eqnarray}
In the smooth case ($h\equiv0$), the proximal gradient is simply the gradient $\nabla f(\cdot)$, so the short and long steps (\eqref{eq:shortproxstep} and \eqref{eq:longproxstep}) simplify as
\begin{eqnarray}
  x^+ &=& x - \tfrac1 L \nabla f(x) \label{eq:shortstep}\\
  x^{++} &=& x - \tfrac{1}{\mu} \nabla f(x).\label{eq:longstep}
\end{eqnarray}

For a function $h:\R^n \to \R$, the elements $s \in \R^n$ that satisfy
\begin{equation*}
  h(y) \geq h(x) + \langle s,y-x\rangle, \qquad \forall y \in \R^n,
\end{equation*}
are called the subgradients of $h$ at the point $x$. In words, all elements defining a linear function that supports
$h$ at a point $x$ are subgradients. The set of all $s$ at a point $x$ is called the subdifferential of $h$ and it is
denoted by $\partial h(x)$. The following Lemma characterizes elements of $\partial h(x^+)$.
\begin{lemma}\label{smalllemma}
Let $G_L(x)$ and $x^+$ be defined in \eqref{eq:proxgrad} and \eqref{eq:shortproxstep}, respectively. Then, for all $x\in\R^n$, $G_L(x) - \nabla f(x)\in \partial h(x^+)$.
\end{lemma}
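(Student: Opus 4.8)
The plan is to recognize that the short step $x^+$ is nothing but a proximal point, and then to read off the claimed membership from the first-order optimality condition of the (strongly convex) proximal subproblem. Concretely, combining \eqref{eq:proxgrad} and \eqref{eq:shortproxstep} with $\gamma \equiv L$ gives
\[
x^+ = x - \tfrac{1}{L}G_L(x) = x - \left(x - \prox{x - \tfrac{1}{L}\nabla f(x)}{L}\right) = \prox{x - \tfrac{1}{L}\nabla f(x)}{L},
\]
so that $x^+$ is the unique minimizer of $u \mapsto h(u) + \tfrac{L}{2}\tnorm{x - \tfrac{1}{L}\nabla f(x) - u}$, directly from the definition \eqref{eq:proxmap}.

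Next I would write down the optimality condition for this minimization. Since the objective is the sum of the convex function $h$ and the everywhere-differentiable quadratic $u\mapsto \tfrac{L}{2}\tnorm{x - \tfrac1L\nabla f(x) - u}$, the sum rule for subdifferentials applies, and the stationarity condition at the minimizer $x^+$ reads $0 \in \partial h(x^+) + L\bigl(x^+ - x + \tfrac1L \nabla f(x)\bigr)$, i.e.
\[
L\left(x - x^+\right) - \nabla f(x) \in \partial h(x^+).
\]
Finally, substituting $x - x^+ = \tfrac1L G_L(x)$ (again from \eqref{eq:shortproxstep}) into the left-hand side yields $L(x - x^+) - \nabla f(x) = G_L(x) - \nabla f(x)$, which is exactly the asserted inclusion $G_L(x) - \nabla f(x) \in \partial h(x^+)$.

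I do not anticipate any real obstacle here: the argument is a short, direct calculation. The only point deserving a word of care is the justification that the subdifferential of the proximal objective splits as $\partial h(x^+)$ plus the gradient of the smooth quadratic term, so that the optimality condition can be written in the separated form used above; this is the standard Moreau--Rockafellar sum rule, valid because the quadratic part is finite and differentiable on all of $\R^n$.
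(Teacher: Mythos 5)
Your proof is correct and follows essentially the same route as the paper's: identify $x^+$ as the proximal point $\prox{x - \tfrac{1}{L}\nabla f(x)}{L}$, write the first-order optimality condition for the proximal subproblem, and substitute $x - x^+ = \tfrac{1}{L}G_L(x)$. The only difference is cosmetic (the paper states the optimality condition scaled by $\tfrac1L$ and then multiplies through by $L$), and your explicit appeal to the Moreau--Rockafellar sum rule is a welcome extra word of justification.
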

	
\begin{proof} For a given point $x\in\R^n$, combining \eqref{eq:proxmap}, \eqref{eq:proxgrad} and \eqref{eq:shortproxstep} gives
\begin{eqnarray*}
		x^+ = \arg\min_u \left\{h(u) + \tfrac L2 \tnorm{u- (x - \tfrac{1}{L} \nabla f(x))}  \right\}.
	\end{eqnarray*}
	Then
	$$ 0\in  \tfrac 1L \partial h(x^+) + x^+ - (x - \tfrac{1}{L} \nabla f(x) ) \overset{\eqref{eq:shortproxstep}}{=} \tfrac 1L \partial h(x^+) - \tfrac 1L (G_L(x) -\nabla f(x)).$$
Multiplying through by $L$, and rearranging, gives the result.\qed
\end{proof}

\subsection{A Lower Bound for Composite Functions}\label{s:lbcomposite}

Here, a lower bound is developed for composite function, i.e., it is assumed that $h$ is not equivalent to the zero function. The following Lemma defines a lower bound for $F(x)$ in \eqref{eq:Problem}. The lower bound is the same as that presented in \cite{Chen16} and \cite{Drusvyatskiy16}, with the roles of $x$ and $y$ reversed here; the proof is included for completeness.
\begin{lemma}[Lemma~3.1 in \cite{Chen16}; Lemma~6.1 in \cite{Drusvyatskiy16}]
  Given a point $y \in \R^n$, let $G_L(y)$ and $y^+$ be defined in \eqref{eq:proxgrad} and \eqref{eq:shortproxstep}, respectively. Then for all $x\in \R^n$
  \begin{eqnarray}\label{eq:lbcomposite}
    \varphi(x;y) \eqdef F(y^+) + \langle G_L(y),x-y\rangle + \tfrac{\mu}2\tnorm{x-y} + \tfrac1{2L}\tnorm{G_L(y)} \leq F(x).
  \end{eqnarray}
\end{lemma}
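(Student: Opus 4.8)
The plan is to establish the inequality $\varphi(x;y) \leq F(x) = f(x) + h(x)$ by bounding $f(x)$ from below and $h(x)$ from below separately, and then adding the two bounds. For the smooth part, I would invoke strong convexity \eqref{eq:ass1} at the point $y$, which gives
\[
  f(x) \geq f(y) + \langle \nabla f(y), x - y\rangle + \tfrac{\mu}{2}\|x-y\|^2.
\]
For the nonsmooth part, the key observation is Lemma~\ref{smalllemma}, which tells us that $s := G_L(y) - \nabla f(y) \in \partial h(y^+)$. Hence by convexity of $h$,
\[
  h(x) \geq h(y^+) + \langle G_L(y) - \nabla f(y), x - y^+\rangle.
\]

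Next I would add these two inequalities. The terms involving $\nabla f(y)$ should telescope: on the right-hand side we get $\langle \nabla f(y), x-y\rangle - \langle \nabla f(y), x - y^+\rangle = \langle \nabla f(y), y^+ - y\rangle = -\tfrac1L\langle \nabla f(y), G_L(y)\rangle$, using the definition \eqref{eq:shortproxstep} of $y^+$. Collecting everything, the sum becomes
\[
  F(x) \geq f(y) + h(y^+) - \tfrac1L\langle \nabla f(y), G_L(y)\rangle + \langle G_L(y), x - y^+\rangle + \tfrac{\mu}{2}\|x-y\|^2.
\]
Now I would rewrite $\langle G_L(y), x - y^+\rangle = \langle G_L(y), x - y\rangle + \langle G_L(y), y - y^+\rangle = \langle G_L(y), x-y\rangle + \tfrac1L\|G_L(y)\|^2$, again via \eqref{eq:shortproxstep}. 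This produces the linear term $\langle G_L(y), x-y\rangle$ and the quadratic term $\tfrac{\mu}{2}\|x-y\|^2$ that appear in \eqref{eq:lbcomposite}, so it remains to check that the constant terms match, i.e.\ that
\[
  f(y) + h(y^+) - \tfrac1L\langle \nabla f(y), G_L(y)\rangle + \tfrac1L\|G_L(y)\|^2 \;\geq\; F(y^+) + \tfrac1{2L}\|G_L(y)\|^2.
\]

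The main obstacle — really the only nontrivial estimate — is controlling $f(y^+)$ in terms of $f(y)$. Here I would use the $L$-smoothness upper bound \eqref{eq:ass2} at $y$ with $x$ replaced by $y^+ = y - \tfrac1L G_L(y)$, which gives
\[
  f(y^+) \leq f(y) - \tfrac1L\langle \nabla f(y), G_L(y)\rangle + \tfrac1{2L}\|G_L(y)\|^2.
\]
Since $F(y^+) = f(y^+) + h(y^+)$, substituting this bound shows exactly that the constant terms on the left dominate those on the right (the $\tfrac1L\|G_L(y)\|^2$ splits as $\tfrac1{2L}\|G_L(y)\|^2 + \tfrac1{2L}\|G_L(y)\|^2$, matching), so the claimed inequality \eqref{eq:lbcomposite} follows. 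In the smooth case $h\equiv 0$ one has $G_L(y) = \nabla f(y)$ and $y^{++}$ recovers \eqref{eq:lowerboundDF} after completing the square, which is a useful sanity check. I expect the whole argument to be a few lines once the two subgradient/convexity inequalities and the descent lemma are lined up correctly; the bookkeeping of the $\langle \nabla f(y), G_L(y)\rangle$ and $\|G_L(y)\|^2$ coefficients is the part that needs care.
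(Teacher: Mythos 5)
Your argument is correct and uses exactly the same three ingredients as the paper's proof — strong convexity of $f$ at $y$, the descent bound from $L$-smoothness at $y^+$, and the subgradient characterization $G_L(y)-\nabla f(y)\in\partial h(y^+)$ from Lemma~\ref{smalllemma} combined with convexity of $h$ — merely assembled in a slightly different order (building the lower bound on $F(x)$ directly rather than bounding $F(y^+)$ from above and rearranging). The bookkeeping of the $\langle\nabla f(y),G_L(y)\rangle$ and $\|G_L(y)\|^2$ terms checks out, so this is essentially the paper's proof.
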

\begin{proof}
   By Assumption~\ref{A_SCL} ($\mu$-strongly convex)
\begin{eqnarray}\label{eq:mustrong}
 f(y) + \ve{\nabla f(y) }{x-y} + \tfrac \mu 2 \tnorm{x-y} \leq f(x), \qquad \forall x,y\in\R^n,
\end{eqnarray}
and ($L$-smooth)
\begin{eqnarray}\label{eq:Lsmooth}
\notag
  f(y^+) &\leq& f(y) + \langle \nabla f(y), y^+ - y \rangle + \tfrac L{2 }\tnorm{y^+-y }\\
  &\overset{\eqref{eq:shortproxstep}}{=}& f(y) -\tfrac1L \langle \nabla f(y), G_L(y)\rangle + \tfrac1{2L}\tnorm{G_L(y)}.
\end{eqnarray}
Combining \eqref{eq:mustrong} and \eqref{eq:Lsmooth} gives
\begin{eqnarray*}
\notag
  F(y^+) &\leq&  F(x) - \ve{\nabla f(y) }{x-y} - \tfrac \mu 2 \tnorm{x-y} -\tfrac1L \langle \nabla f(y), G_L(y)\rangle\\
  && + \tfrac1{2L}\tnorm{G_L(y)} + (h(y^+) - h(x))\\
  &=&  F(x) - \ve{\nabla f(y) }{x-y^+} - \tfrac \mu 2 \tnorm{x-y} \\
  && + \tfrac1{2L}\tnorm{G_L(y)} + (h(y^+) - h(x))\\
  &=&  F(x) - \ve{\nabla f(y) - G_L(y)}{x-y^+} - \tfrac \mu 2 \tnorm{x-y}\\
  && + \tfrac1{2L}\tnorm{G_L(y)} + (h(y^+) - h(x)) - \ve{G_L(y)}{x-y^+}\\
  &\overset{{\rm Lemma}~\ref{smalllemma}}{\leq}&  F(x)- \tfrac \mu 2 \tnorm{x-y} + \tfrac1{2L}\tnorm{G_L(y)} - \ve{G_L(y)}{x-y^+}\\
  &=&  F(x)- \tfrac \mu 2 \tnorm{x-y} - \tfrac1{2L}\tnorm{G_L(y)} - \ve{G_L(y)}{x-y}.
\end{eqnarray*}
Rearranging gives the result.\qed
\end{proof}

Before stating the next result, which shows that $\varphi(x;y)$ is a quadratic lower bound, we give the following equivalence,
\begin{eqnarray}\label{eq:normequivcomposite}
  \tfrac{\mu}2\tnorm{x- y^{++}} \overset{\eqref{eq:longproxstep}}{=} \tfrac{\mu}2\tnorm{x-y} + \langle x-y,G_L(y)\rangle + \tfrac1{2\mu}\tnorm{G_L(y)}.
\end{eqnarray}

\begin{lemma}\label{lem:varphixycanon}
  For all $x,y\in\R^n$, the lower bound \eqref{eq:lbcomposite} has the canonical form
  \begin{equation}\label{eq:varphixycanon}
  \varphi(x;y) = \varphi^* + \tfrac{\mu}{2} \tnorm{x-y^{++}},
\end{equation}
where
 \begin{equation}\label{eq:varphiminval}
  \varphi^* = F(y^+) + \left(\tfrac1{2L} - \tfrac1{2\mu} \right)\tnorm{G_L(y)}.
\end{equation}
\end{lemma}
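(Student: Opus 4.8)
The plan is to obtain the canonical form directly from the algebraic identity \eqref{eq:normequivcomposite}, which is the only substantive ingredient. First I would rearrange \eqref{eq:normequivcomposite} so as to isolate exactly the linear-plus-quadratic part appearing in the definition \eqref{eq:lbcomposite}, namely
\[
  \ve{G_L(y)}{x-y} + \tfrac{\mu}{2}\tnorm{x-y} \;=\; \tfrac{\mu}{2}\tnorm{x-y^{++}} - \tfrac{1}{2\mu}\tnorm{G_L(y)}.
\]
Substituting this into \eqref{eq:lbcomposite} replaces those two terms, and the remaining multiples of $\tnorm{G_L(y)}$ collect as $\left(\tfrac{1}{2L} - \tfrac{1}{2\mu}\right)\tnorm{G_L(y)}$, leaving
\[
  \varphi(x;y) = F(y^+) + \left(\tfrac{1}{2L} - \tfrac{1}{2\mu}\right)\tnorm{G_L(y)} + \tfrac{\mu}{2}\tnorm{x-y^{++}},
\]
which is precisely \eqref{eq:varphixycanon} with $\varphi^*$ given by \eqref{eq:varphiminval}.

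It then remains to justify the notation $\varphi^*$: since $\tfrac{\mu}{2}\tnorm{x-y^{++}}\ge 0$ for all $x$, with equality at $x=y^{++}$, we get $\varphi^* = \min_x\varphi(x;y)$, attained at the long proximal step $y^{++}$ of \eqref{eq:longproxstep} (equivalently, differentiate \eqref{eq:varphixycanon} in $x$ and set the gradient to zero). For completeness one should also verify \eqref{eq:normequivcomposite} itself, which is a one-line expansion: writing $y^{++} = y - \tfrac1\mu G_L(y)$ and squaring gives $\tfrac{\mu}{2}\tnorm{x-y^{++}} = \tfrac{\mu}{2}\tnorm{x-y} + \ve{x-y}{G_L(y)} + \tfrac1{2\mu}\tnorm{G_L(y)}$ after collecting terms.

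There is no real obstacle here — the argument is a short computation. The only things to watch are the bookkeeping of the three $\tnorm{G_L(y)}$ coefficients when combining terms, and a minor typo in \eqref{eq:normequivcomposite}, where $\tnorm{\nabla G_L(y)}$ should read $\tnorm{G_L(y)}$ (this is what makes the cancellation come out correctly, since the proximal-gradient step uses $G_L(y)$, not its gradient).
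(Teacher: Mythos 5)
Your proof is correct and uses the same essential ingredient as the paper — the completing-the-square identity \eqref{eq:normequivcomposite} — merely run in the forward direction (substituting into \eqref{eq:lbcomposite} to produce the canonical form) rather than the paper's order of first computing the minimizer $y^{++}$ and minimum value and then verifying that the canonical form expands back to \eqref{eq:lbcomposite}. You are also right that $\tnorm{\nabla G_L(y)}$ in \eqref{eq:normequivcomposite} is a typo for $\tnorm{G_L(y)}$.
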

\begin{proof}
Minimizing $\varphi(x;y)$ in \eqref{eq:lbcomposite} w.r.t. $x$, and using the definition in \eqref{eq:proxmap}, yields the minimizer
\begin{equation}\label{eq:varphiminimizer}
  y^{++} = \arg\min_x \varphi(x;y).
\end{equation}
The corresponding minimal value is
\begin{align}\label{eq:varphistar}
 \varphi^* &\eqdef \min_x\varphi(x;y) = \varphi(y^{++};y)\notag\\
 &\overset{\eqref{eq:lbcomposite}}{=} F(y^+) + \langle G_L(y),y^{++}-y\rangle + \tfrac{\mu}2 \tnorm{y^{++}-y} + \tfrac1{2L}\tnorm{G_L(y)}\notag\\
 &\overset{\eqref{eq:longproxstep}}{=} F(y^+) - \tfrac1{\mu}\langle G_L(y),G_L(y) + \tfrac{\mu}2\tnorm{\tfrac1{\mu}G_L(y)} + \tfrac1{2L}\tnorm{G_L(y)}\notag\\
 &= F(y^+) + \left(\tfrac1{2L} - \tfrac1{2\mu} \right)\tnorm{G_L(y)},
\end{align}
which is equivalent to \eqref{eq:varphiminimizer}. (Note also that \eqref{eq:varphiminimizer} and \eqref{eq:varphiminval} are the minimizer and minimum value of \eqref{eq:varphixycanon}, respectively.) Furthermore,
\begin{eqnarray*}
    \varphi(x;y) &\overset{\eqref{eq:lbcomposite}}{=}& F(y^+) + \langle G_L(y),x-y\rangle + \tfrac{\mu}2\tnorm{x-y} + \tfrac1{2L}\tnorm{G_L(y)}\\
    &\overset{\eqref{eq:normequivcomposite}}{=}& F(y^+) + \left(\tfrac1{2L} - \tfrac1{2\mu} \right)\tnorm{G_L(y)} + \tfrac{\mu}2\tnorm{x-y^{++}},
  \end{eqnarray*}
which, by \eqref{eq:varphistar}, confirms that \eqref{eq:varphixycanon} is equivalent to \eqref{eq:lbcomposite}.\qed
\end{proof}
\begin{remark}
  Lemma~\ref{lem:varphixycanon} shows that the lower bound \eqref{eq:lbcomposite} (equivalently \eqref{eq:varphixycanon}) is a quadratic lower bound for $F(x)$.
\end{remark}
Now, a sequence of lower bounds $\{\varphi_k(x)\}_{k=0}^\infty$ can be defined in the following way. Using \eqref{eq:lbcomposite} and a given initial point $x_0$, define the function
\begin{equation}\label{eq:compositecanonical}
  \varphi_0(x) \eqdef \varphi(x;x_0) = \varphi_0^* + \tfrac{\mu}2\tnorm{x-v_0},
\end{equation}
where
\begin{eqnarray}\label{eq:minvalminmize}
  \varphi_0^* \eqdef F(x_0^+) + \left(\tfrac1{2L} - \tfrac1{2\mu} \right)\tnorm{G_L(x_0)} \quad \text{and} \quad v_0 = x_0^{++}.
\end{eqnarray}
Differentiating \eqref{eq:compositecanonical} w.r.t. $x$ shows that the minimum value and minimizer of $\varphi_0(x)$ are given by \eqref{eq:minvalminmize}. This motivates the following construction: for a given $x_0 \in \R^n$,
\begin{enumerate}
  \item $\varphi_0(x) \eqdef \varphi(x;x_0) = \varphi_0^* + \tfrac{\mu}2\tnorm{x-v_0},$
  \item For $k\geq 0$, $\alpha_k \in (0,1)$, and some point $y_k$, recursively define
  \begin{equation}\label{eq:varphiconvexcomb}
    \varphi_{k+1}(x) \eqdef (1-\alpha_k) \varphi_k(x) + \alpha_k \varphi(x;y_k).
  \end{equation}
\end{enumerate}

\begin{lemma}\label{lem:varphicanonical}
  For all $k \geq 0$, $\varphi_{k+1}$ can be written in the canonical form
  \begin{eqnarray}\label{eq:canon}
    \varphi_{k+1}(x) \eqdef \varphi_{k+1}^* + \tfrac{\mu}2\tnorm{x-v_{k+1}},
  \end{eqnarray}
  where
  \begin{eqnarray}
    v_{k+1} &\eqdef&(1-\alpha_k)v_k + \alpha_ky_k^{++}\label{eq:vkcomp}\\
    \varphi_{k+1}^* &\eqdef& (1-\alpha_k)(\varphi_k^* + \tfrac{\mu}2\tnorm{v_{k+1}-v_{k}})\label{eq:phikcomp}\\
    \notag
     &&+\; \alpha_k\left(F(y_k^+) + \left(\tfrac1{2L} - \tfrac1{2\mu}\right)\tnorm{G_L(y_k)} + \tfrac{\mu}2\tnorm{v_{k+1} - y_k^{++}}\right).
  \end{eqnarray}
\end{lemma}
\begin{proof}
The proof is by induction and the induction hypothesis is
\begin{equation}\label{eq:inductionhypoth}
  \varphi_{k}(x) = \varphi_k^* + \tfrac{\mu}2\tnorm{x - v_k}.
\end{equation}
The following holds:
\begin{eqnarray}\label{eq:normintermediate}
  &&(1-\alpha_k)\tnorm{x - v_k}+ \alpha_k\tnorm{x-y_k^{++}}\notag\\
  &=&  (1-\alpha_k)\tnorm{x -v_{k+1}+ v_{k+1}- v_k}+ \alpha_k\tnorm{x-v_{k+1}+ v_{k+1}-y_k^{++}}\notag\\
  &=&  (1-\alpha_k)\left(\tnorm{x -v_{k+1}}+ \tnorm{v_{k+1}- v_k} + 2\langle x -v_{k+1},v_{k+1}- v_k\rangle\right)\notag\\
  &&+ \alpha_k\left(\tnorm{x-v_{k+1}}+ \tnorm{v_{k+1}-y_k^{++}} + 2\langle x-v_{k+1},v_{k+1}-y_k^{++}\rangle\right) \notag\\
  &\overset{\eqref{eq:vkcomp}}{=}& \tnorm{x -v_{k+1}}+ (1-\alpha_k)\tnorm{v_{k+1}- v_k} + \alpha_k\tnorm{x-v_{k+1}}\notag\\
  && + 2(1-\alpha_k)\langle x -v_{k+1},-\alpha_k(v_k-y_k^{++})\rangle + 2\alpha_k\langle x-v_{k+1},(1-\alpha_k)(v_k-y_k^{++})\rangle\notag\\
  &=& \tnorm{x -v_{k+1}}+ (1-\alpha_k)\tnorm{v_{k+1}- v_k} + \alpha_k\tnorm{x-v_{k+1}}.
\end{eqnarray}
Now,
\begin{eqnarray*}
  \varphi_{k+1}(x) &\overset{\eqref{eq:varphiconvexcomb}}{=}& (1-\alpha_k) \varphi_k(x) + \alpha_k \varphi(x;y_k)\\
  &\overset{\eqref{eq:inductionhypoth}}{=}& (1-\alpha_k)(\varphi_k^* + \tfrac{\mu}2\tnorm{x - v_k}) + \alpha_k \varphi(x;y_k)\\
  &\overset{\eqref{eq:varphixycanon}+\eqref{eq:varphiminval}}{=}& (1-\alpha_k)(\varphi_k^* + \tfrac{\mu}2\tnorm{x - v_k})\\
   &&+ \alpha_k\left(F(y_k^+) + \left(\tfrac1{2L} - \tfrac1{2\mu} \right)\tnorm{G_L(y_k)} + \tfrac{\mu}{2} \tnorm{x-y_k^{++}}\right)\\
   &\overset{\eqref{eq:normintermediate}}{=}& (1-\alpha_k)(\varphi_k^* + \tfrac{\mu}2\tnorm{v_{k+1}- v_k}) + \tfrac{\mu}2\tnorm{x -v_{k+1}}\\
   &&+ \alpha_k\left(F(y_k^+) + \left(\tfrac1{2L} - \tfrac1{2\mu} \right)\tnorm{G_L(y_k)} + \tfrac{\mu}{2} \tnorm{x-v_{k+1}}\right)\\
   &\overset{\eqref{eq:phikcomp}}{=}& \varphi_{k+1}^* + \tfrac{\mu}2\tnorm{x -v_{k+1}}.
\end{eqnarray*}
The proof is complete.\qed
\end{proof}

Lemma~\ref{lem:varphicanonical} shows that the lower bound is quadratic. This feature is important because it is easy to find the minimizer and minimum value of a quadratic. Indeed, Lemma~\ref{lem:varphicanonical} shows that the minimizer of the quadratic lower bound $\phi_{k+1}(x)$ is $v_{k+1}$, which is easily computed/updated using \eqref{eq:vkcomp}. Moreover, the minimum value of the quadratic is $\phi_{k+1}^*$ in \eqref{eq:phikcomp}, which is also readily available. Both $v_{k+1}$ and $\phi_{k+1}^*$ are used in the algorithms presented later in this work, and the expressions given in the previous lemma show that they can be computed cheaply and only depend upon quantities that are computed at iteration $k$ (no long memory is needed).

The following lemma gives an equivalent expression for $\varphi_{k+1}^*$, which is slightly more convenient in terms of ease of computation.
\begin{lemma}
  An equivalent expression for $\varphi_{k+1}^*$ in \eqref{eq:phikcomp} is
\begin{eqnarray}\label{eq:varphistarequiv}
    \varphi_{k+1}^* &=& (1-\alpha_k)\left(\varphi_k^* + \alpha_k\tfrac{\mu}2\tnorm{v_{k}-y_k^{++}}\right)\\
    \notag
    &&+ \alpha_k\left(F(y_k^+) +  \left(\tfrac1{2L} -\tfrac1{2\mu} \right)\tnorm{G_L(y_k)}\right).
\end{eqnarray}
\end{lemma}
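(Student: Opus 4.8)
The plan is to follow the proof of Lemma~\ref{l:equivphikstar} almost verbatim. The only structural difference between \eqref{eq:zzz1} and \eqref{eq:phikcomp} is that the scalar $f(y_k) - \tfrac{1}{2\mu}\tnorm{\nabla f(y_k)}$ in the smooth case is replaced by $F(y_k^+) + \bigl(\tfrac1{2L} - \tfrac1{2\mu}\bigr)\tnorm{G_L(y_k)}$ in the composite case; all terms involving $v_k$, $v_{k+1}$ and $y_k^{++}$ are identical, and crucially $v_{k+1}$ satisfies the same recursion \eqref{eq:vkcomp} here as \eqref{eq:zzzzzz0} did there. Hence the same algebra applies.

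First I would record the two elementary identities that follow by substituting $v_{k+1} = (1-\alpha_k)v_k + \alpha_k y_k^{++}$ from \eqref{eq:vkcomp}, exactly as in \eqref{eq:vkp1vk} and \eqref{eq:vkp1xkpp}:
\[
\tnorm{v_{k+1}-v_k} = \alpha_k^2\tnorm{v_k - y_k^{++}}, \qquad \tnorm{v_{k+1}-y_k^{++}} = (1-\alpha_k)^2\tnorm{v_k - y_k^{++}}.
\]
Next I would combine them to evaluate the quantity appearing inside \eqref{eq:phikcomp}, namely $(1-\alpha_k)\tnorm{v_{k+1}-v_k} + \alpha_k\tnorm{v_{k+1}-y_k^{++}}$: substituting the two identities above, factoring out $\alpha_k(1-\alpha_k)\tnorm{v_k-y_k^{++}}$, and using $\alpha_k + (1-\alpha_k)=1$ collapses it to $\alpha_k(1-\alpha_k)\tnorm{v_k - y_k^{++}}$, which is precisely \eqref{eq:vkequiv}.

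Finally, substituting this into \eqref{eq:phikcomp} replaces the term $(1-\alpha_k)\tfrac{\mu}{2}\tnorm{v_{k+1}-v_k} + \alpha_k\tfrac{\mu}{2}\tnorm{v_{k+1}-y_k^{++}}$ by $(1-\alpha_k)\alpha_k\tfrac{\mu}{2}\tnorm{v_k-y_k^{++}}$; pulling the common factor $(1-\alpha_k)$ together with $\varphi_k^*$ then yields exactly \eqref{eq:varphistarequiv}. There is no genuine obstacle here: the argument is purely algebraic and is handled word-for-word by the proof already given for Lemma~\ref{l:equivphikstar}. The only point requiring a little care is the bookkeeping of which $\alpha_k$-factor multiplies which norm term so that the cross terms cancel, but this is completely routine.
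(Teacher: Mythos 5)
Your proposal is correct and follows exactly the same route as the paper, which likewise just observes that \eqref{eq:vkcomp} coincides with \eqref{eq:zzzzzz0} and then reuses the identity \eqref{eq:vkequiv} from the proof of Lemma~\ref{l:equivphikstar} inside \eqref{eq:phikcomp}. You have merely written out the steps the paper leaves as a reference back to the smooth case.
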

\begin{proof}
Using \eqref{eq:vkcomp} gives the equivalences
\begin{eqnarray}\label{eq:vkp1vk}
  \tnorm{v_{k+1}-v_{k}} &=& \tnorm{(1-\alpha_k)v_k + \alpha_k y_k^{++}-v_{k}} = \alpha_k^2\tnorm{v_{k}-y_k^{++}}
\end{eqnarray}
and
\begin{eqnarray}
\notag
  \tnorm{v_{k+1} - y_k^{++}} &=& \tnorm{(1-\alpha_k)v_k + \alpha_k y_k^{++} - y_k^{++}}\\
  &=& (1-\alpha_k)^2\tnorm{v_k - y_k^{++}}.\label{eq:vkp1xkpp}
\end{eqnarray}
Combining \eqref{eq:vkp1vk} and \eqref{eq:vkp1xkpp} gives
\begin{eqnarray}
\notag
  &&\hspace{-5mm}(1-\alpha_k)\tnorm{v_{k+1}-v_{k}} + \alpha_k\tnorm{v_{k+1} - y_k^{++}}\\
  \notag
  &=&\alpha_k^2(1-\alpha_k)\tnorm{v_{k}-y_k^{++}} + \alpha_k(1-\alpha_k)^2\tnorm{v_k - y_k^{++}}\\
  \notag
  &=&\alpha_k(1-\alpha_k)(\alpha_k + (1-\alpha_k)) \tnorm{v_{k}-y_k^{++}} \\
  &=&\alpha_k(1-\alpha_k)\tnorm{v_{k}-y_k^{++}}.\label{eq:vkequiv}
\end{eqnarray}
Substituting \eqref{eq:vkequiv} into \eqref{eq:phikcomp} gives the result.\qed
\end{proof}

\begin{lemma}\label{lem:varphilessthanF}
Let $\alpha_k \in (0,1)$ $\forall k$. Then, for all $x\in\R^n$, $\varphi_k(x) \leq F(x)$ for all $k\geq 0$.
\end{lemma}
\begin{proof}
  The proof follows by a simple induction argument so is omitted for brevity.
\end{proof}
Lemma~\ref{lem:varphilessthanF} is fundamentally important for this work, because it shows that the quadratic bounds presented in the previous lemmas are \emph{lower bounds} for $F(x_k)$ for all $k\geq 0$.  If we can develop an algorithm that satisfies both conditions \eqref{eq:def1} and \eqref{eq:def2} in Definition~\eqref{def:UES}, then the algorithm is guaranteed to converge by Proposition~\ref{prop:UES}. But the quadratic lower bounds do indeed satisfy \eqref{eq:def1}, so it remains to develop an algorithm whose iterates satisfy \eqref{eq:def2}.

\subsection{A Lower Bound for Smooth Functions}\label{s:lbsmooth}

In this section, only smooth functions are considered, so here it is assumed that $h=0$. Note that when $h\equiv 0$, \eqref{eq:proxmap} and \eqref{eq:proxgrad} show that $G_L(y) = \nabla f(y)$. Now, for any point $y\in \R^n$, one can define a lower bound
\begin{equation}\label{eq:lowerboundDF}
	\phi(x;y) := f(y) - \tfrac{1}{2\mu}\tnorm{\nabla f(y)} + \tfrac{\mu}{2} \tnorm{x- y^{++} } \leq f(x),
\end{equation}
which holds with equality $\phi(x;y) = f(x)$ if and only if $x = y$. The lower bound in \eqref{eq:lowerboundDF} is a consequence of Assumption~\ref{A_SCL} ($\mu$-strongly convex) and the equivalence \eqref{eq:normequivcomposite} used with $G_L(y)=\nabla f(y)$.

\begin{remark}
  Notice that $\varphi(x;y)$ in \eqref{eq:lbcomposite} does not generalize $\phi(x;y)$ in \eqref{eq:lowerboundDF} in the sense that \eqref{eq:lbcomposite} used with $h\equiv 0$ \emph{does not} recover \eqref{eq:lowerboundDF}. (This will be discussed further in Section~\ref{sec:compare}.)
\end{remark}

Now, a sequence of lower bounds $\{\phi_k(x)\}_{k=0}^\infty$ can be defined in the following way. Using \eqref{eq:lowerboundDF} and a given initial point $x_0$, define the function
\begin{equation}\label{eq:phi0}
  \phi_0(x) \eqdef \phi(x; x_0) = \phi_0^* + \tfrac{\mu}{2}\tnorm{x-v_0},
\end{equation}
where
\begin{eqnarray}\label{eq:c0v0}
  \phi_0^* = f(x_0) - \tfrac{1}{2\mu}\tnorm{\nabla f(x_0)} \quad \text{and}\quad  v_0 = x_0^{++}.
\end{eqnarray}
Differentiating the expression in \eqref{eq:phi0} w.r.t. $x$ shows that $\phi_0^*$ and $v_0$ in \eqref{eq:c0v0} are the minimum value and minimizer of $\phi_0(x)$, respectively. This motivates the following construction:
\begin{enumerate}
  \item $\phi_0(x) \eqdef \phi(x; x_0) = \phi_0^* + \tfrac{\mu}{2}\tnorm{x-v_0}$
  \item For $k\geq 0$, $\alpha_k \in (0,1)$, and some point $y_k$, recursively define
  \begin{equation}\label{eq:zzzzz4}
	\phi_{k+1}(x) := (1- \alpha_k) \phi_k(x) + \alpha_k \phi(x; y_k).
\end{equation}
\end{enumerate}

\begin{lemma}\label{l:equivphikstar}
  For all $k\geq0$, $\phi\kp$ has the canonical form
  \begin{equation}\label{eq:phicanonical}
    \phi\kp(x) = \phi\kp^* + \tfrac{\mu}2 \tnorm{x - v\kp},
  \end{equation}
  where $\alpha_k \in (0,1)$, $v_{k+1}$ is defined in \eqref{eq:vkequiv} and
  \begin{align}\label{eq:phistarequiv}
    \phi_{k+1}^* &= (1-\alpha_k)\left(\phi_k^* + \alpha_k\tfrac{\mu}2\tnorm{v_{k}-y_k^{++}}\right) +\alpha_k( f(y_k) - \tfrac{1}{2\mu}\tnorm{\nabla f(y_k)}).
\end{align}
\end{lemma}

The following Lemma shows that $\phi_k$ is a global lower bound for $f$.
\begin{lemma}\label{lem:lb}
Let $\alpha_k \in (0,1)$ $\forall k$. Then, for all $x\in\R^n$, $\phi_k(x) \leq F(x)$ for all $k\geq 0$.
\end{lemma}

\subsection{Geometry of the lower bounds}
Here we reproduce the example in Section 2 of \cite{Drusvyatskiy16} to show how the lower bounds are combined to generate new quadratic lower bounds. All parameters used are identical to those in \cite{Drusvyatskiy16}, but using the notation of this paper. The left plot shows the quadratics $\phi_k(x) = 1+0.5(x+2)^2$ and $\phi(x;x_k) = 3+0.5(x-4)^2$, and the resulting quadratics generated by taking convex combinations as described previously.
\begin{figure}[H]\centering
\includegraphics[width=5.8cm]{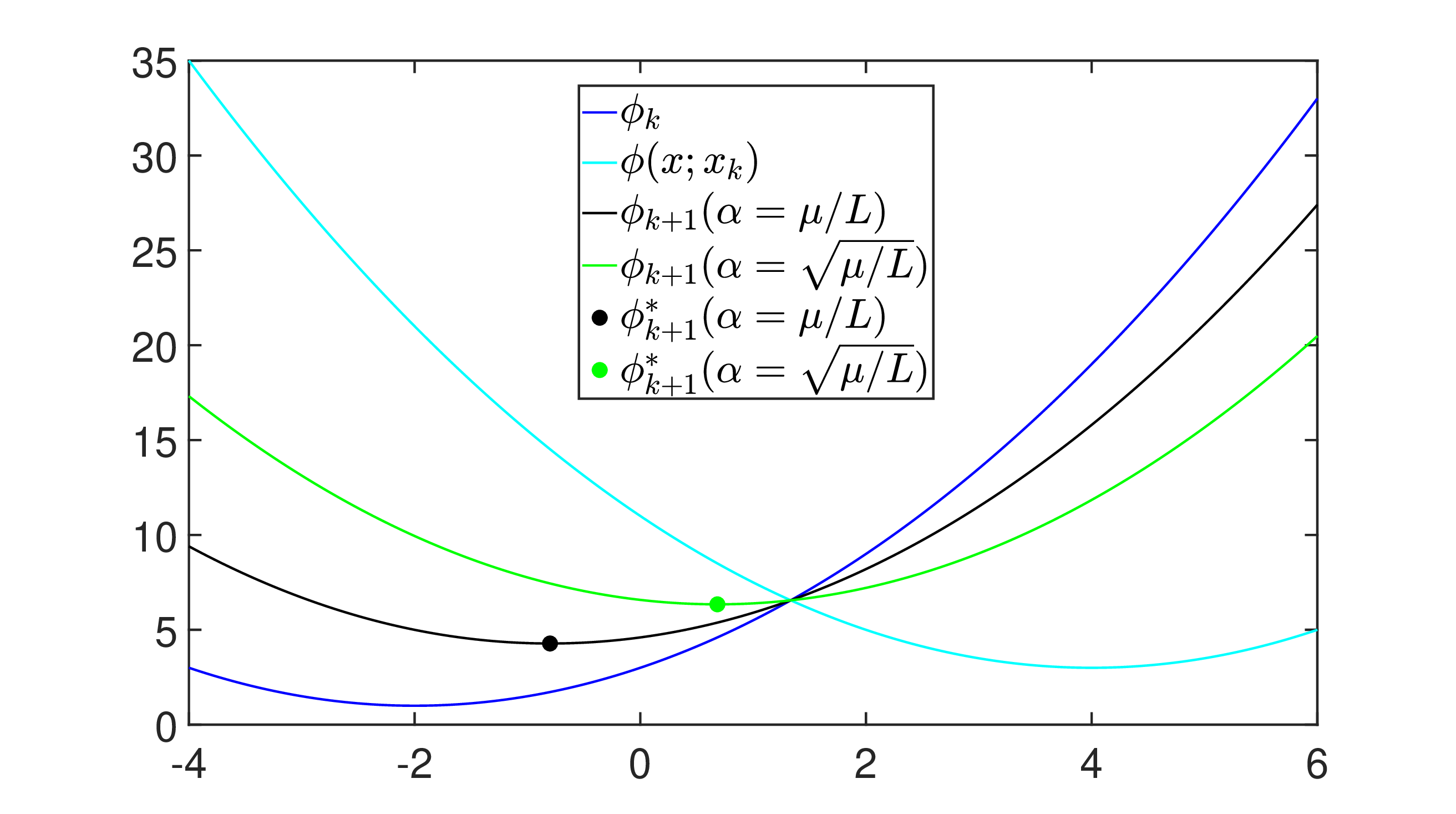}
\includegraphics[width=5.8cm]{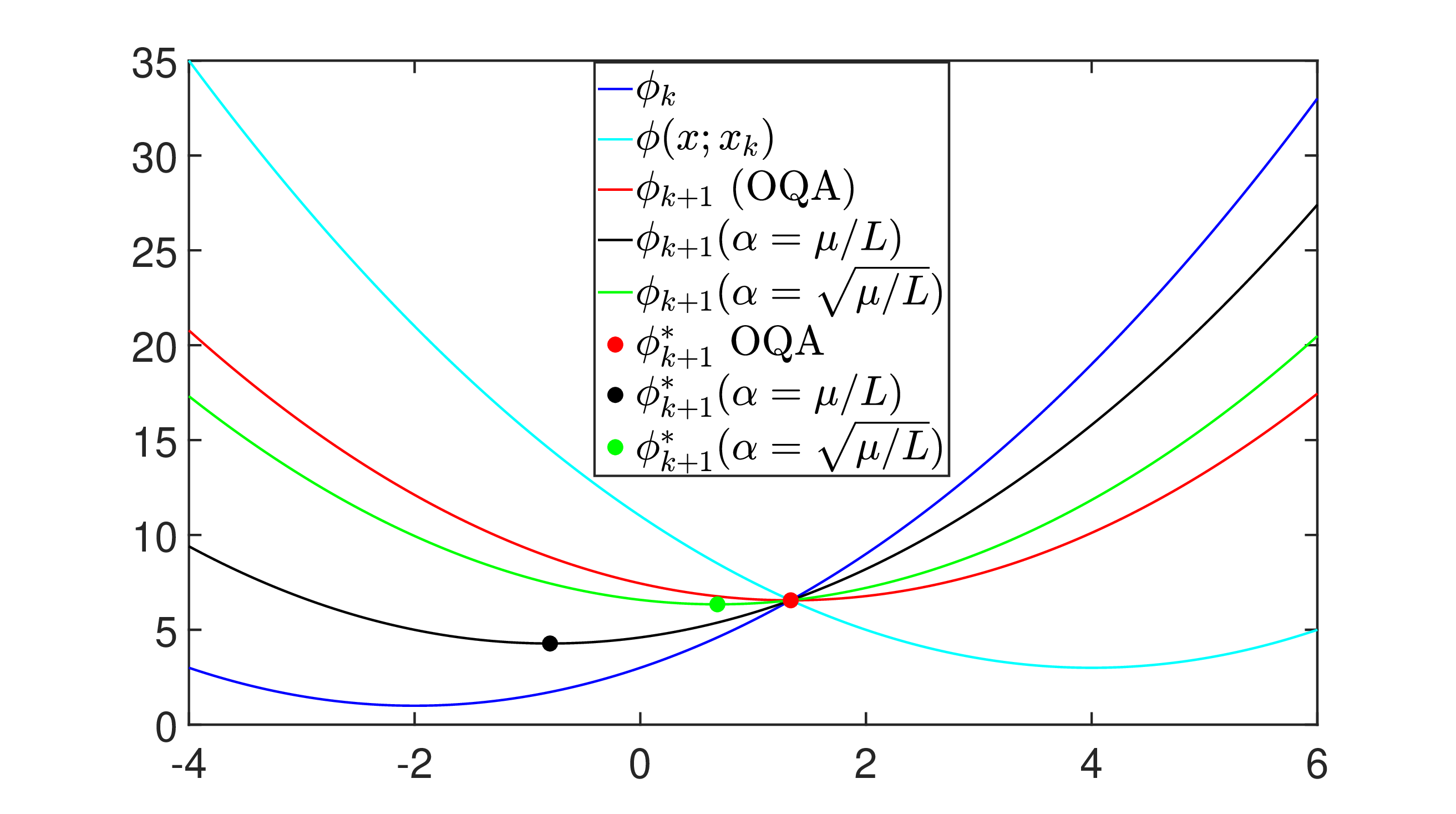}
\caption{Left: The quadratic lower bounds $\phi_k(x) = 1+0.5(x+2)^2$ and $\phi(x;x_k) = 3+0.5(x-4)^2$. The new lower bounds $\phi_{k+1}(x)$ for $\alpha = \mu/L=0.2$ (corresponding to the unaccelerated algorithm in this work) and $\phi_{k+1}(x)$ for $\alpha = \sqrt{\mu/L}=\sqrt{0.2}$ (corresponding to the accelerated algorithm in this work). Right: This is the same plot as the left but it also shows the optimal quadratic average (corresponding to the OQA algorithm) from \cite{Drusvyatskiy16}. Finally, the minimizer of each quadratic model is indicated by the round marker.}
\label{fig:QLB}
\end{figure}

\section{Algorithms and Convergence Guarantees for Composite Functions}\label{sec:composite}

The purpose of this section is to demonstrate that the UES framework, and the previously presented lower bounds, are \emph{useable} definitions that give rise to \emph{efficient implementable algorithms}. Throughout this section we consider composite optimization problems (problems of the form \eqref{eq:Problem} with $h \neq 0$) and, as for all results in this work, we suppose that Assumption~\ref{A_SCL} holds.

We present two algorithms whose iterates fit the Underestimate Sequence framework described in Section~\ref{sec:UES}, and use the lower bounds developed in Section~\ref{s:lbcomposite}. Both algorithms fit the composite setting very naturally; the first algorithm is a proximal gradient descent type method, while the second algorithm is an accelerated proximal gradient variant. Both algorithms incorporate verifiable stopping conditions, and convergence guarantees are established via the UES framework.

\subsection{A Composite Underestimate Sequence Algorithm}

We now present an algorithm to solve \eqref{eq:Problem}, which is based on the UES framework. A brief description will follow.

\begin{algorithm}
	\caption{Composite UES Algorithm (CUESA)}
	\label{alg:CUESA}
	\begin{algorithmic}[1]
		\STATE Initialization: Set $k=0$, $\epsilon >0$, initial point $x_0\in\R^n$ and compute $\mu$, $L$.
		\STATE Set $v_0$ and $\varphi_0^*$ as in \eqref{eq:minvalminmize} and let $\alpha_k= \frac{\mu}{L}$.
		\WHILE {$F(x_k) - \varphi_k^* > \epsilon$}
        \STATE Set $y_k = x_k$, $y_k^+ = x_k^+$, and $y_k^{++} = x_k^{++}$
		\STATE Set $x_{k+1} = x_k - \tfrac{1}{L} G_L (x_k),$
		\STATE Update $v_{k+1}$ and $\varphi_{k+1}^*$ as in \eqref{eq:vkcomp} and \eqref{eq:phikcomp} respectively.
		\STATE $k=k+1$.
		\ENDWHILE
	\end{algorithmic}
\end{algorithm}

The Composite (functions) UnderEstimate Sequence Algorithm (CUESA) presented in Algorithm~\ref{alg:CUESA} solves problem~\eqref{eq:Problem} when $h \neq 0$. The algorithm uses a fixed step size $\alpha_k = \tfrac{\mu}{L}$, and note that $y_k$ is not explicitly used in CUESA ($y_k = x_k$ for all $k\geq 0$). Moreover, the lower bound $\varphi_{k+1}(x)$ is not explicitly constructed; the algorithm simply computes and uses the minimizer of the lower bound $\varphi_{k+1}^*$.

Considering Step~5 in isolation shows that each iteration of CUESA is simply a proximal gradient descent step. However, CUESA is distinct from a standard proximal gradient method due to the inclusion of additional ingredients related to the lower bound $\varphi_k(x)$, which guarantee an $\epsilon$-optimal solution.

In addition to the proximal gradient, the algorithm utilizes two vectors at every iteration, $x_k$ and $v_k$. By \eqref{eq:phikcomp}, no additional vectors are required to update $\varphi_{k}^*$, but the function value $F(x_k^+) \equiv F(x_{k+1})$ is required. Overall, at every iteration of CUESA, three vectors must be stored, and one function value must be computed.

\begin{theorem}\label{thm3}
  Let Assumption~\ref{A_SCL} hold. The sequences $\{x_k\}_{k=0}^\infty$, $\{\varphi_k(x)\}_{k=0}^\infty$ and $\{\alpha_k\}_{k=0}^\infty$ generated by CUESA (Algorithm~\ref{alg:CUESA}) form a UES.
\end{theorem}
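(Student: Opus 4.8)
The plan is to mirror the structure of the proof of Theorem~\ref{thm1}, replacing the smooth-function lower bound with the composite lower bound from Section~\ref{s:lbcomposite}. Concretely, I must verify the two conditions of Definition~\ref{def:UES} for the sequences produced by CUESA. The first condition, $\varphi_k(x) \leq F(x)$, is immediate: $\alpha_k = \mu/L \in (0,1)$ since $\mu \leq L$ (a consequence of combining \eqref{eq:ass1} and \eqref{eq:ass2}), and then the induction lemma showing $\varphi_k(x) \leq F(x)$ for all $k$ applies directly. So the entire burden is the descent inequality \eqref{eq:def2}.

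For \eqref{eq:def2}, I would start from the step $x_{k+1} = x_k - \tfrac1L G_L(x_k)$ (Step~4) together with the fact that $y_k = x_k$, $y_k^+ = x_k^+$, so that $x_{k+1} = x_k^+ = y_k^+$. The key descent estimate is the one already established inside the proof of the composite lower bound lemma, namely the $L$-smoothness bound $f(y_k^+) \leq f(y_k) - \tfrac1L\langle \nabla f(y_k), G_L(y_k)\rangle + \tfrac1{2L}\|G_L(y_k)\|^2$; adding $h(y_k^+)$ to both sides and using the characterization $G_L(y_k) - \nabla f(y_k) \in \partial h(y_k^+)$ from Lemma~\ref{smalllemma} together with convexity of $h$ gives a clean bound of the form $F(x_{k+1}) = F(y_k^+) \leq F(x_k) - \tfrac1{2L}\|G_L(x_k)\|^2$ (or something very close, possibly with the extra $-\langle G_L(x_k), x_k - x_k\rangle$ terms vanishing since $y_k = x_k$). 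Then I would subtract the equivalent expression \eqref{eq:varphistarequiv} for $\varphi_{k+1}^*$ from both sides, exactly as in Theorem~\ref{thm1}.

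After the subtraction, the right-hand side should organize into $(1-\alpha_k)(F(x_k) - \varphi_k^*)$ plus a collection of leftover terms: a coefficient times $\|G_L(x_k)\|^2$ and a nonnegative quadratic term $-(1-\alpha_k)\alpha_k \tfrac{\mu}{2}\|v_k - y_k^{++}\|^2$. The coefficient in front of $\|G_L(x_k)\|^2$ should, after collecting $-\tfrac1{2L}$ from the descent step and the $\alpha_k(\tfrac1{2L} - \tfrac1{2\mu})$ piece from $\varphi_{k+1}^*$, reduce to something like $-\tfrac1{2L} - \tfrac{\alpha_k}{2L} + \tfrac{\alpha_k}{2\mu}$; substituting $\alpha_k = \mu/L$ makes $\tfrac{\alpha_k}{2\mu} = \tfrac1{2L}$, and the remaining $-\tfrac{\alpha_k}{2L} = -\tfrac{\mu}{2L^2} \leq 0$, so the whole term is $\leq 0$. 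Dropping the two nonpositive leftover terms yields \eqref{eq:def2}, establishing that the CUESA iterates form a UES.

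The main obstacle I anticipate is bookkeeping the constants in the $\|G_L(x_k)\|^2$ coefficient, since the composite case introduces the extra $(\tfrac1{2L} - \tfrac1{2\mu})\|G_L(y_k)\|^2$ factor inside $\varphi_{k+1}^*$ that has no analogue in the smooth case, and I need to be careful that $F(x_{k+1}) - \varphi_{k+1}^*$, not merely $f(x_{k+1}) - \varphi_{k+1}^*$, appears on the left — this is where Lemma~\ref{smalllemma} and convexity of $h$ do the real work in passing from $f(y_k^+)$ to $F(y_k^+)$. Once that identification is in place, the algebra collapses just as in Theorem~\ref{thm1}, with $\alpha_k = \mu/L$ killing the dominant error term and the quadratic $\|v_k - y_k^{++}\|^2$ term being harmlessly nonnegative.
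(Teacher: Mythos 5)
Your proposal is correct and follows essentially the same route as the paper: establish the descent inequality $F(x_{k+1}) \leq F(x_k) - \tfrac{1}{2L}\|G_L(x_k)\|^2$ (the paper obtains it by setting $x=y=x_k$ in \eqref{eq:lbcomposite}, which is equivalent to your derivation via Lemma~\ref{smalllemma} and convexity of $h$), subtract \eqref{eq:varphistarequiv}, drop the nonnegative $\|v_k - y_k^{++}\|^2$ term, and check that the coefficient of $\|G_L(x_k)\|^2$ is nonpositive when $\alpha_k = \mu/L$. Your bookkeeping of that coefficient differs slightly from the paper's (which gets exactly $\tfrac{\alpha_k}{2\mu}-\tfrac{1}{2L}=0$ after cancelling $\alpha_k F(x_{k+1})$ against $\alpha_k F(y_k^+)$), but since you only need nonpositivity the conclusion stands.
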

\begin{proof}
It must be shown that the iterates generated by Algorithm~\ref{alg:CUESA} satisfy the conditions of Definition~\ref{def:UES}. Because $\alpha_k = \mu/L \in (0,1)$ for all $k \geq 0$, by Lemma~\ref{lem:varphilessthanF}, \eqref{eq:def1} holds. It remains to prove \eqref{eq:def2}. From Step~4 in CUESA, one sees that $y_k = x_k$ for all $k$, so it also follows that $y_k^+ = x_{k+1}$ for all $k$. Using $y = x= x_k$ in the lower bound \eqref{eq:lbcomposite} gives
\begin{eqnarray}\label{eq:compFreduce}
  F(x_{k+1}) \leq F(x_k) - \tfrac1{2L} \tnorm{G_L(x_k)}.
\end{eqnarray}
Thus,
\begin{align*}
&F(x_{k+1})-\varphi_{k+1}^*\\
&=(1-\alpha_k) F(x_{k+1}) +\alpha_k F(x_{k+1}) -\varphi_{k+1}^*\\
&\overset{\eqref{eq:varphistarequiv}}{=} (1-\alpha_k) F(x_{k+1}) +\alpha_k F(x_{k+1}) -
    (1-\alpha_k)\left(\varphi_k^* + \alpha_k\tfrac{\mu}2\tnorm{v_{k}-y_k^{++}}\right)\\
    &\qquad-\alpha_k\left(F(y_k^+) +  \left(\tfrac1{2L} -\tfrac1{2\mu} \right)\tnorm{G_L(y_k)}\right)\\
&= (1-\alpha_k) F(x_{k+1})  -
    (1-\alpha_k)\left(\varphi_k^* + \alpha_k\tfrac{\mu}2\tnorm{v_{k}-y_k^{++}}\right)\\
    &\qquad-\alpha_k\left(\tfrac1{2L} -\tfrac1{2\mu} \right)\tnorm{G_L(y_k)}\\
    &\leq (1-\alpha_k) \left(F(x_{k+1}) - \varphi_k^*\right)-\alpha_k\left(\tfrac1{2L} -\tfrac1{2\mu} \right)\tnorm{G_L(y_k)}
    \\
&\overset{\eqref{eq:compFreduce}}{\leq} 
(1-\alpha_k)\left( F(x_{k}) - \tfrac1{2L} \tnorm{G_L(x_k)} - \phi_k^*\right)- \alpha_k\left(\tfrac1{2L}- \tfrac{1}{2\mu}\right)\tnorm{G_L(x_k)}\\
&\leq 
(1-\alpha_k)
\left( F(x_{k})- \phi_k^*\right)+ \left(
  \tfrac{\alpha_k}{2\mu} -\tfrac{\alpha_k}{2L}
 -(1-\alpha_k) \tfrac1{2L}
  \right) \tnorm{ G_L(x_k)}\\
&\leq (1-\alpha_k)(F(x_k)-\varphi_k^*),
\end{align*}
where the last step follows because $\alpha_k = \tfrac{\mu}{L}$ so
$\tfrac{\alpha_k}{2\mu} -\tfrac1{2L}=  0.$\qed
\end{proof}
Theorem~\ref{thm3} confirms that the iterates generated by Algorithm~\ref{alg:CUESA} form a UES. Recalling Proposition~\ref{prop:UES}, and noting that $\sum_{k=0}^\infty \alpha_k = \infty$ because $\alpha_k = \mu/L$ for all $k\geq 0$, CUESA is guaranteed to converge linearly to the solution of \eqref{eq:Problem}, as is stated in the following corollary.
\begin{corollary}\label{coro3}
Let Assumption~\ref{A_SCL} hold. Then, the sequence of iterates $\{x_k\}_{k\geq0}$ generated by Algorithm \ref{alg:CUESA} exhibits a linear rate of convergence
	\begin{align*}
		 F(x_k) -\varphi_k^* \leq \left(1- \tfrac{\mu}{L}\right)^k (F(x_0) -\varphi_0^*).
	\end{align*}
\end{corollary}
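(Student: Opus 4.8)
The plan is to obtain Corollary~\ref{coro3} as an immediate consequence of Theorem~\ref{thm3} together with Proposition~\ref{prop:UES}, in exactly the same way that Corollary~\ref{coro1} follows from Theorem~\ref{thm1}. First I would invoke Theorem~\ref{thm3}, which establishes that the sequences $\{x_k\}_{k=0}^\infty$, $\{\varphi_k(x)\}_{k=0}^\infty$ and $\{\alpha_k\}_{k=0}^\infty$ generated by CUESA (Algorithm~\ref{alg:CUESA}) satisfy Definition~\ref{def:UES}, i.e., they form a UES of $F(x)$; in particular $\varphi_k(x) \leq F(x)$ for all $x\in\R^n$ and all $k\geq 0$, and $F(x_{k+1}) - \varphi_{k+1}^* \leq (1-\alpha_k)(F(x_k) - \varphi_k^*)$.

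Next I would apply Proposition~\ref{prop:UES} to this UES, or equivalently just telescope the recursion directly. Since CUESA uses the constant stepsize $\alpha_k = \tfrac{\mu}{L}$ for every $k\geq 0$, and $\tfrac{\mu}{L} \in (0,1)$ under Assumption~\ref{A_SCL}, the per-iteration contraction $F(x_{k+1}) - \varphi_{k+1}^* \leq (1-\tfrac{\mu}{L})(F(x_k) - \varphi_k^*)$ can be unrolled $k$ times to give $F(x_k) - \varphi_k^* \leq (1-\tfrac{\mu}{L})^k (F(x_0) - \varphi_0^*)$, which is precisely the claimed bound. If one prefers to quote Proposition~\ref{prop:UES} verbatim, the factor $\lambda_k$ there reduces to $(1-\tfrac{\mu}{L})^k$ because every $\alpha_i$ equals $\tfrac{\mu}{L}$. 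As a final remark one could note that, since $\varphi_k^* \leq \varphi_k(x^*) \leq F^*$, the same inequality certifies $F(x_k) - F^* \leq (1-\tfrac{\mu}{L})^k (F(x_0) - \varphi_0^*) \to 0$, i.e., CUESA converges linearly to the optimum.

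There is essentially no substantive obstacle: the analytical work has already been carried out in Theorem~\ref{thm3}, and the corollary is pure bookkeeping. The only mild point requiring care is aligning the indexing convention used in the definition of $\lambda_k$ in Proposition~\ref{prop:UES} with the constant value $\alpha_k = \tfrac{\mu}{L}$, so that the exponent in $(1-\tfrac{\mu}{L})^k$ comes out exactly; telescoping the recursion from Theorem~\ref{thm3} directly sidesteps this and is the cleanest route.
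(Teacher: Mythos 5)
Your proposal is correct and matches the paper's treatment: the corollary is stated as an immediate consequence of Theorem~\ref{thm3} together with Proposition~\ref{prop:UES}, with the constant choice $\alpha_k = \mu/L$ giving the rate $(1-\mu/L)^k$, exactly as the paper handles the analogous Corollary~\ref{coro1}. Your remark about telescoping directly to avoid the indexing quirk in the definition of $\lambda_k$ is a sensible refinement but changes nothing substantive.
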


\subsection{An Accelerated Composite UES Algorithm}

CUESA has a linear rate of convergence, but the rate constant is suboptimal. Thus, the Accelerated Composite UnderEstimate Sequence Algorithm (ACUESA) for solving \eqref{eq:Problem} when $h \neq 0$ is presented in Algorithm~\ref{alg:ACUESA}, and ACUESA achieves the best possible rate of convergence.
\begin{algorithm}[h!]
	\caption{Accelerated Composite UES Algorithm (ACUESA)}
	\label{alg:ACUESA}
	\begin{algorithmic}[1]
		\STATE Initialization: Set $k=0$, $\epsilon >0$, initial point $x_0\in\R^n$ and compute $\mu$, $L$.
		\STATE Set $v_0$ and $\varphi_0^*$ as in \eqref{eq:minvalminmize}.	
                Let $\alpha_k= \sqrt{\frac{\mu}{L}}$, $\beta_k = \frac{1}{1+\alpha_k}$.
		\WHILE {$F(x_k) - \varphi_k^* > \epsilon$}
		\STATE Set $y_k = \beta_k x_k + (1- \beta_k) v_k$.
		\STATE Set $x_{k+1} = y_k - \frac{1}{L}  G_L(y_k).$	
		\STATE Update $v_{k+1}$ and $\varphi_{k+1}^*$ as in \eqref{eq:vkcomp} and \eqref{eq:phikcomp} respectively.
		\STATE $k=k+1$.
		\ENDWHILE
	\end{algorithmic}
\end{algorithm}

ACUESA uses a fixed step size $\alpha_k = \sqrt{\mu/L}$ and a fixed acceleration parameter $\beta_k = \frac{1}{1+\alpha_k}$. ACUESA explicitly uses a point $y_k (\neq x_k)$, which is a convex combination of the points $x_k$ and $v_k$. At each iteration, a proximal gradient descent step is taken \emph{from} $y_k$, with the proximal gradient computed at $y_k$. In addition to the proximal gradient, ACUESA constructs \emph{three} points at every iteration, $x_k$, $v_k$ and $y_k$. Thus, at every iteration of ACUESA, four $n$-dimensional vectors must be stored, and one function value must be computed.

\begin{theorem}\label{thm4}
	Let Assumption~\ref{A_SCL} hold. The sequences $\{x_k\}_{k=0}^\infty$, $\{\varphi_k(x)\}_{k=0}^\infty$ and $\{\alpha_k\}_{k=0}^\infty$ generated by ACUESA (Algorithm~\ref{alg:ACUESA}) form a UES.
\end{theorem}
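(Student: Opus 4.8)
\textbf{Proof proposal for Theorem~\ref{thm4}.}

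The plan is to mirror the structure of the proof of Theorem~\ref{thm2}, replacing the gradient $\nabla f$ by the proximal gradient $G_L$ throughout and the smooth lower bound by the composite lower bound $\varphi(x;y)$ from \eqref{eq:lbcomposite}. First I would verify the easy condition \eqref{eq:def1}: since $\alpha_k = \sqrt{\mu/L} \in (0,1)$ for all $k$, the lower bound Lemma for composite functions gives $\varphi_k(x)\leq F(x)$ for all $x\in\R^n$ immediately. Hence the work is entirely in establishing the descent inequality \eqref{eq:def2}.

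For \eqref{eq:def2}, I would start from the $L$-smoothness inequality applied at the point $y_k$, namely \eqref{eq:compFreduce} with $x_k$ replaced by $y_k$, which (using $x_{k+1}=y_k^+$ by Step~5 of ACUESA) reads $F(x_{k+1}) = F(y_k^+)\leq F(y_k) - \tfrac{1}{2L}\|G_L(y_k)\|^2$. Next, subtract $\varphi_{k+1}^*$ using the equivalent form \eqref{eq:varphistarequiv}; the $F(y_k^+)$ terms cancel against $\alpha_k F(y_k^+)$ after writing $F(x_{k+1}) = (1-\alpha_k)F(x_{k+1}) + \alpha_k F(x_{k+1})$ and bounding $(1-\alpha_k)F(x_{k+1}) \le (1-\alpha_k)(F(y_k) - \tfrac1{2L}\|G_L(y_k)\|^2)$. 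This produces
\begin{align*}
F(x_{k+1}) - \varphi_{k+1}^* \leq{}& (1-\alpha_k)(F(y_k) - \varphi_k^*) - \left(\tfrac{1}{2L} - \tfrac{\alpha_k}{2\mu} + \tfrac{\alpha_k}{2\mu}\right)\|G_L(y_k)\|^2 \\
&{}- \alpha_k(1-\alpha_k)\tfrac{\mu}{2}\|v_k - y_k^{++}\|^2,
\end{align*}
after which I expand $\tfrac{\mu}{2}\|v_k - y_k^{++}\|^2$ via the composite norm equivalence \eqref{eq:normequivcomposite} (with $x=v_k$) to split off a $\langle G_L(y_k), v_k - y_k\rangle$ term and a $\tfrac{1}{2\mu}\|G_L(y_k)\|^2$ term. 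Collecting coefficients of $\|G_L(y_k)\|^2$ gives exactly $\tfrac{1}{2L} - \tfrac{\alpha_k^2}{2\mu}$, which vanishes since $\alpha_k = \sqrt{\mu/L}$, leaving
\[
F(x_{k+1}) - \varphi_{k+1}^* \leq (1-\alpha_k)(F(y_k) - \varphi_k^*) - \alpha_k(1-\alpha_k)\langle G_L(y_k), v_k - y_k\rangle.
\]

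To finish, I would bound the inner product term. Using $y_k = \beta_k x_k + (1-\beta_k)v_k$ from Step~4, rearrange as in \eqref{eq:vk} to get $v_k - y_k = \tfrac{\beta_k}{1-\beta_k}(y_k - x_k)$, and use the identity \eqref{eq:alphabeta1}, i.e.\ $\tfrac{\alpha_k\beta_k}{1-\beta_k} = 1$. The one genuinely composite-specific step, and the place I expect to need care, is replacing the convexity-of-$f$ estimate \eqref{eq:pqr} by a convexity estimate for $F$ in terms of the proximal gradient: specifically I need $-\langle G_L(y_k), x_k - y_k\rangle \le F(x_k) - F(y_k^+) - \tfrac{1}{2L}\|G_L(y_k)\|^2$ (a standard proximal-gradient inequality, following from $G_L(y_k) - \nabla f(y_k)\in\partial h(y_k^+)$ by Lemma~\ref{smalllemma}, $\mu$-strong convexity of $f$, and $L$-smoothness). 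However, the cleaner route that matches the smooth proof is to bound $-\langle G_L(y_k), v_k - y_k\rangle = \tfrac{\beta_k}{1-\beta_k}\langle G_L(y_k), x_k - y_k\rangle$ and absorb the resulting $F(x_k) - F(y_k^+)$ via the reduction inequality. Either way, after multiplying by $\alpha_k(1-\alpha_k)$ and using \eqref{eq:alphabeta1} to turn $\alpha_k(1-\alpha_k)\tfrac{\beta_k}{1-\beta_k}$ into $(1-\alpha_k)$, the terms telescope: $(1-\alpha_k)(F(y_k)-\varphi_k^*) + (1-\alpha_k)(F(x_k) - F(y_k^+)) + (\text{leftover nonpositive }\|G_L\|^2\text{ terms}) \le (1-\alpha_k)(F(x_k) - \varphi_k^*)$, which is \eqref{eq:def2} with $\alpha_k = \sqrt{\mu/L}$. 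The main obstacle is getting the $\|G_L(y_k)\|^2$ bookkeeping exactly right so that the leftover coefficient is nonpositive once the composite convexity inequality (rather than the plain smooth one) is substituted; unlike the smooth case, the proximal-gradient convexity bound itself carries a $\tfrac{1}{2L}\|G_L(y_k)\|^2$ term that must be tracked.
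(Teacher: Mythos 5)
Your overall architecture matches the paper's: verify \eqref{eq:def1} via the composite lower-bound lemma, use \eqref{eq:varphistarequiv} with $x_{k+1}=y_k^+$ to cancel the $\alpha_k F(y_k^+)$ terms, expand $\tfrac{\mu}{2}\|v_k-y_k^{++}\|^2$ via \eqref{eq:normequivcomposite}, kill the $\|G_L(y_k)\|^2$ coefficient using $\alpha_k=\sqrt{\mu/L}$, and finish with $v_k-y_k=\tfrac{\beta_k}{1-\beta_k}(y_k-x_k)$ and $\tfrac{\alpha_k\beta_k}{1-\beta_k}=1$. However, the final bookkeeping as you describe it does not close, and the problem is exactly at the spot you flagged. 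By applying the reduction inequality $F(y_k^+)\le F(y_k)-\tfrac1{2L}\|G_L(y_k)\|^2$ up front, you arrive at
\[
F(x_{k+1})-\varphi_{k+1}^*\;\le\;(1-\alpha_k)\bigl(F(y_k)-\varphi_k^*\bigr)+(1-\alpha_k)\langle G_L(y_k),x_k-y_k\rangle-(\text{nonnegative terms}),
\]
so to conclude you would need $F(y_k)+\langle G_L(y_k),x_k-y_k\rangle\le F(x_k)$. That is false in general for composite $F$: the inequality supplied by \eqref{eq:lbcomposite} is $F(y_k^+)+\langle G_L(y_k),x_k-y_k\rangle+\tfrac{\mu}{2}\|x_k-y_k\|^2+\tfrac1{2L}\|G_L(y_k)\|^2\le F(x_k)$, with $F(y_k^+)$ and \emph{not} $F(y_k)$ (unlike the smooth case, where plain convexity gives the bound in terms of $f(y_k)$ itself). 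If you substitute this and try to ``absorb $F(x_k)-F(y_k^+)$ via the reduction inequality,'' you are left with the residual $(1-\alpha_k)\bigl(F(y_k)-F(y_k^+)-\tfrac1{2L}\|G_L(y_k)\|^2\bigr)$, which is \emph{nonnegative} by that very reduction inequality and therefore cannot be discarded; the reduction inequality bounds $F(y_k^+)$ from above, which is the wrong direction for converting $-F(y_k^+)$ back into $-F(y_k)$. (Your displayed ``proximal-gradient inequality'' also has the sign of the inner product reversed, but that is secondary.)

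The fix is small and is what the paper does: do \emph{not} invoke the reduction inequality at all in this proof. Keep $F(x_{k+1})=F(y_k^+)$ intact through the algebra; after expanding $\|v_k-y_k^{++}\|^2$ and simplifying the $\|G_L(y_k)\|^2$ coefficients one obtains
\[
F(x_{k+1})-\varphi_{k+1}^*=(1-\alpha_k)\Bigl(F(y_k^+)+\langle G_L(y_k),x_k-y_k\rangle+\tfrac1{2L}\|G_L(y_k)\|^2-\varphi_k^*-\tfrac{\mu}{2}\tfrac{\beta_k}{1-\beta_k}\|x_k-y_k\|^2\Bigr),
\]
and a single application of $\varphi(x_k;y_k)\le F(x_k)$ replaces the entire grouping $F(y_k^+)+\langle G_L(y_k),x_k-y_k\rangle+\tfrac1{2L}\|G_L(y_k)\|^2$ by $F(x_k)-\tfrac{\mu}{2}\|x_k-y_k\|^2$, after which the remaining terms are nonpositive and \eqref{eq:def2} follows. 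Your plan has the right ingredients but combines them in an order that double-spends the $\tfrac1{2L}\|G_L(y_k)\|^2$ budget.
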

\begin{proof}
From Step~5 in ACUESA,
\begin{equation}\label{eq:xkp1yp}
  x_{k+1} = y_k - \tfrac1LG_L(y_k) \overset{\eqref{eq:shortproxstep}}{\equiv} y_k^+.
\end{equation}
Hence,
  \begin{eqnarray}
    F(x_{k+1}) - \varphi_{k+1}^* &=& (1-\alpha_k)F(x_{k+1}) + \alpha_k F(x_{k+1})  - \varphi_{k+1}^*\notag\\
    &\overset{\eqref{eq:varphistarequiv}}{=}&(1-\alpha_k)F(x_{k+1}) + \alpha_k F(x_{k+1}) - (1-\alpha_k)\varphi_k^* -  \alpha_kF(y_k^+)\notag\\
    && -\alpha_k(1-\alpha_k)\tfrac{\mu}2\tnorm{v_{k} - y_k^{++}} - \alpha_k\left(\tfrac1{2L} - \tfrac1{2\mu} \right)\tnorm{G_L(y_k)}\notag\\
     &\overset{\eqref{eq:xkp1yp}}{=}&(1-\alpha_k)F(x_{k+1})- (1-\alpha_k)\varphi_k^*\notag\\
     && -\alpha_k(1-\alpha_k)\tfrac{\mu}2\tnorm{v_{k} - y_k^{++}}+\left(\tfrac{\alpha_k}{2\mu} -\tfrac{\alpha_k}{2L}\right)\tnorm{G_L(y_k)}\notag\\
     &=&(1-\alpha_k)\left(F(x_{k+1})-\varphi_k^* -\alpha_k\tfrac{\mu}2\tnorm{v_{k} - y_k^{++}}\right)\notag\\
     &&+\left(\tfrac{\alpha_k}{2\mu} -\tfrac{\alpha_k}{2L}\right)\tnorm{G_L(y_k)}. \label{eq:Fkp11}
  \end{eqnarray}
  Rearranging the update for $y_k$ in Step~4 of Algorithm~\ref{alg:ACUESA} gives
  \begin{equation}\label{eq:vk}
    v_k = \tfrac1{1-\beta_k}(y_k - \beta_k x_k) = y_k + \tfrac{\beta_k}{1-\beta_k}(y_k - x_k),
  \end{equation}
and because $\beta_k = \tfrac{1}{1+\alpha_k}$,
  \begin{eqnarray}\label{eq:alphabeta1}
    1-\beta_k =  1-\tfrac{1}{1+\alpha_k} = \tfrac{\alpha_k}{1+\alpha_k} =\alpha_k\beta_k \quad \Rightarrow \quad \tfrac{\alpha_k\beta_k}{1-\beta_k}= 1.
  \end{eqnarray}
Thus, combining \eqref{eq:vk} and \eqref{eq:longproxstep} gives
  \begin{align}
    &\tfrac{\alpha_k\mu}2\tnorm{v_k - y_k^{++}}\notag \\
    &= \tfrac{\alpha_k\mu}2\tnorm{\tfrac{\beta_k}{1-\beta_k}(y_k - x_k) + \tfrac1{\mu}G_L(y_k)}\notag\\
    &= \tfrac{\alpha_k\mu}2\tfrac{\beta_k^2}{(1-\beta_k)^2}\tnorm{x_k-y_k} + \tfrac{\alpha_k}{2\mu}\tnorm{G_L(y_k)}
    - \tfrac{\alpha_k\beta_k}{1-\beta_k}\langle G_L(y_k),x_k-y_k\rangle\notag\\
    &\overset{\eqref{eq:alphabeta1}}{=} \tfrac{\mu}2\tfrac{\beta_k}{1-\beta_k}\tnorm{x_k-y_k} + \tfrac{\alpha_k}{2\mu}\tnorm{G_L(y_k)} - \langle G_L(y_k),x_k-y_k\rangle.\label{eq:vkyknorm}
  \end{align}
  Substituting \eqref{eq:vkyknorm} into \eqref{eq:Fkp11} results in
  \begin{eqnarray*}
    && \hspace{-5mm} F(x_{k+1}) - \varphi_{k+1}^*\\
    &=& (1-\alpha_k)\left(F(x_{k+1})-\varphi_k^* -\tfrac{\mu}2\tfrac{\beta_k}{1-\beta_k}\tnorm{x_k-y_k}  + \langle G_L(y_k),x_k-y_k\rangle \right)\\
     && -(1-\alpha_k)\tfrac{\alpha_k}{2\mu}\tnorm{G_L(y_k)}+\left(\tfrac{\alpha_k}{2\mu} -\tfrac{\alpha_k}{2L}\right)\tnorm{G_L(y_k)}\\
     &=&(1-\alpha_k)\left(F(x_{k+1})-\varphi_k^* -\tfrac{\mu}2\tfrac{\beta_k}{1-\beta_k}\tnorm{x_k-y_k} + \langle G_L(y_k),x_k-y_k\rangle \right)\\
     && + (1-\alpha_k)\tfrac{1}{2L}\tnorm{G_L(y_k)}.
  \end{eqnarray*}
  \textcolor{black}{The second equality uses the fact that $\alpha_k =\sqrt{\mu/L}$.} 
 Using a rearrangement of the lower bound \eqref{eq:lbcomposite}, and \eqref{eq:xkp1yp}, gives
  \begin{eqnarray*}
    && \hspace{-5mm}F(x_{k+1}) - \varphi_{k+1}^*\\
     &\leq&(1-\alpha_k)\left(F(x_k)- \langle G_L(y_k),x_k-y_k\rangle - \tfrac{\mu}2\tnorm{x_k-y_k} - \tfrac1{2L}\tnorm{G_L(y_k)} -\varphi_k^*\right) \\
     && +(1-\alpha_k)\left(-\tfrac{\mu}2\tfrac{\beta_k}{1-\beta_k}\tnorm{x_k-y_k}  + \langle G_L(y_k),x_k-y_k\rangle  + \tfrac{1}{2L}\tnorm{G_L(y_k)}\right)\\
     &=&(1-\alpha_k)(F(x_k) -\varphi_k^*) +(1-\alpha_k)\left( - \tfrac{\mu}2\tnorm{x_k-y_k} -\tfrac{\mu}2\tfrac{\beta_k}{1-\beta_k}\tnorm{x_k-y_k}\right)\\
     &\leq&(1-\alpha_k)(F(x_k) -\varphi_k^*)  - \tfrac{\mu}2(1-\alpha_k)\tfrac{1}{1-\beta_k}\tnorm{x_k-y_k}\\
     &\leq&(1-\alpha_k)(F(x_k) -\varphi_k^*).
  \end{eqnarray*}
  Thus, the iterates generated by ACUESA form a UES.\qed
\end{proof}

Theorem~\ref{thm4} confirms that the iterates generated by Algorithm~\ref{alg:ACUESA} form a UES, and because $\alpha_k = \sqrt{\mu/L}$, ACUESA is guaranteed to converge linearly at the optimal rate to the solution of problem \eqref{eq:Problem}, as is shown in the following corollary.

\begin{corollary}\label{coro4}
Let Assumption~\ref{A_SCL} hold. Then, the sequence of iterates $\{x_k\}_{k\geq0}$ generated by Algorithm \ref{alg:ACUESA} exhibits the optimal linear rate of convergence
	\begin{align*}
		 F(x_k) -\varphi_k^* \leq \left(1- \sqrt{\tfrac{\mu}{L}}\right)^k (F(x_0) -\varphi_0^*).
	\end{align*}
\end{corollary}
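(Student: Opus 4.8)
The plan is to show that the iterates generated by ACUESA satisfy Definition~\ref{def:UES}, after which Corollary~\ref{coro4} follows immediately from Proposition~\ref{prop:UES} with $\alpha_k = \sqrt{\mu/L}$ fixed, giving $\lambda_k = (1-\sqrt{\mu/L})^k$. Condition \eqref{eq:def1} is already established by the last Lemma of Section~\ref{s:lbcomposite} (since $\alpha_k = \sqrt{\mu/L}\in(0,1)$ under Assumption~\ref{A_SCL}), so the real work is verifying the contraction inequality \eqref{eq:def2}, i.e. $F(x_{k+1}) - \varphi_{k+1}^* \leq (1-\alpha_k)(F(x_k) - \varphi_k^*)$.

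First I would record the structural identities that make the algebra go through: from Step~7, $x_{k+1} = y_k - \tfrac1L G_L(y_k) = y_k^+$ (by \eqref{eq:shortproxstep}); from Step~6, $v_k = y_k + \tfrac{\beta_k}{1-\beta_k}(y_k - x_k)$ as in \eqref{eq:vk}; and since $\beta_k = \tfrac1{1+\alpha_k}$, the relations $1-\beta_k = \alpha_k\beta_k$ and $\tfrac{\alpha_k\beta_k}{1-\beta_k}=1$ from \eqref{eq:alphabeta1}. Next I would start from the equivalent form \eqref{eq:varphistarequiv} for $\varphi_{k+1}^*$, write $F(x_{k+1}) = (1-\alpha_k)F(x_{k+1}) + \alpha_k F(x_{k+1})$, and use $F(y_k^+) = F(x_{k+1})$ to cancel the $\alpha_k F(x_{k+1})$ term against the $\alpha_k F(y_k^+)$ appearing in $\varphi_{k+1}^*$. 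This collapses the expression to
\[
(1-\alpha_k)\bigl(F(x_{k+1}) - \varphi_k^* - \alpha_k\tfrac{\mu}{2}\|v_k - y_k^{++}\|^2\bigr) + \bigl(\tfrac{\alpha_k}{2\mu} - \tfrac{\alpha_k}{2L}\bigr)\|G_L(y_k)\|^2 .
\]
The crucial algebraic step is then to expand $\alpha_k\tfrac{\mu}{2}\|v_k - y_k^{++}\|^2$ using $v_k - y_k^{++} = \tfrac{\beta_k}{1-\beta_k}(y_k - x_k) + \tfrac1\mu G_L(y_k)$ (combining \eqref{eq:vk} with \eqref{eq:longproxstep}), which produces a $\|x_k-y_k\|^2$ term with coefficient $\tfrac{\mu}{2}\tfrac{\beta_k}{1-\beta_k}$ (after using \eqref{eq:alphabeta1}), a $\tfrac{\alpha_k}{2\mu}\|G_L(y_k)\|^2$ term that exactly cancels the leftover Moreau-type term, and a cross term $-\langle G_L(y_k), x_k - y_k\rangle$.

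After this cancellation one is left with $F(x_{k+1}) - \varphi_{k+1}^* = (1-\alpha_k)\bigl(F(x_{k+1}) - \varphi_k^* - \tfrac{\mu}{2}\tfrac{\beta_k}{1-\beta_k}\|x_k-y_k\|^2 + \langle G_L(y_k), x_k-y_k\rangle\bigr) + (1-\alpha_k)\tfrac1{2L}\|G_L(y_k)\|^2$. The final move is to bound $F(x_{k+1})$ from above using the lower bound \eqref{eq:lbcomposite} evaluated at $x = x_k$, $y = y_k$, namely $F(x_{k+1}) = F(y_k^+) \leq F(x_k) - \langle G_L(y_k), x_k - y_k\rangle - \tfrac{\mu}{2}\|x_k-y_k\|^2 - \tfrac1{2L}\|G_L(y_k)\|^2$ (a rearrangement of \eqref{eq:lbcomposite}). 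Substituting this in, the $\langle G_L(y_k), x_k-y_k\rangle$ terms cancel, the $\tfrac1{2L}\|G_L(y_k)\|^2$ terms cancel, and what remains is $(1-\alpha_k)(F(x_k) - \varphi_k^*) - (1-\alpha_k)\tfrac{\mu}{2}\bigl(1 + \tfrac{\beta_k}{1-\beta_k}\bigr)\|x_k-y_k\|^2 = (1-\alpha_k)(F(x_k)-\varphi_k^*) - (1-\alpha_k)\tfrac{\mu}{2}\tfrac1{1-\beta_k}\|x_k-y_k\|^2 \leq (1-\alpha_k)(F(x_k)-\varphi_k^*)$, since the dropped term is nonnegative. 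This establishes \eqref{eq:def2} and hence the UES property; Corollary~\ref{coro4} then follows.

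The main obstacle I anticipate is bookkeeping rather than conceptual: correctly expanding the square $\|v_k - y_k^{++}\|^2$ in terms of $\|y_k - x_k\|^2$, $\|G_L(y_k)\|^2$, and the cross term, and then tracking how the $\tfrac{\alpha_k}{2\mu}\|G_L(y_k)\|^2$ pieces — one from this expansion, one from the $\bigl(\tfrac{\alpha_k}{2\mu}-\tfrac{\alpha_k}{2L}\bigr)\|G_L(y_k)\|^2$ term, and one from the final use of \eqref{eq:lbcomposite} — combine so that everything cancels cleanly and leaves only a manifestly nonpositive residual in $\|x_k - y_k\|^2$. Unlike the smooth case (Theorem~\ref{thm2}), here there is no explicit "complete the square" step producing an $\alpha_k^2/(2\mu)$ coefficient; instead the choice $\alpha_k = \sqrt{\mu/L}$ enters through $\tfrac{\alpha_k}{2\mu} - \tfrac{\alpha_k}{2L}$ combining with the $(1-\alpha_k)\tfrac{\alpha_k}{2\mu}$ term, and one must check the arithmetic $(1-\alpha_k)\tfrac{\alpha_k}{2\mu} = \tfrac{\alpha_k}{2\mu} - \tfrac{\alpha_k^2}{2\mu} = \tfrac{\alpha_k}{2\mu} - \tfrac{\alpha_k}{2L}$ so that the $\|G_L(y_k)\|^2$ terms indeed match before the final substitution from \eqref{eq:lbcomposite}.
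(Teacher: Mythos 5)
Your proposal is correct and follows essentially the same route as the paper: it re-derives Theorem~\ref{thm4} (the identity $x_{k+1}=y_k^+$, the expansion of $\varphi_{k+1}^*$ via \eqref{eq:varphistarequiv}, the expansion of $\tfrac{\alpha_k\mu}{2}\|v_k-y_k^{++}\|^2$ using \eqref{eq:vk} and \eqref{eq:alphabeta1}, and the final substitution of the rearranged lower bound \eqref{eq:lbcomposite}) and then invokes Proposition~\ref{prop:UES} with constant $\alpha_k=\sqrt{\mu/L}$. One small caveat: the arithmetic asserted in your closing paragraph, $(1-\alpha_k)\tfrac{\alpha_k}{2\mu}=\tfrac{\alpha_k}{2\mu}-\tfrac{\alpha_k}{2L}$, is false for $\alpha_k=\sqrt{\mu/L}$ (since $\tfrac{\alpha_k^2}{2\mu}=\tfrac{1}{2L}$, not $\tfrac{\alpha_k}{2L}$); the $\|G_L(y_k)\|^2$ terms do not cancel before the final substitution but instead leave the residual $(1-\alpha_k)\tfrac{1}{2L}\|G_L(y_k)\|^2$, exactly as your main derivation correctly shows, so the proof itself is unaffected.
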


The difference in convergence rates between Algorithms~\ref{alg:CUESA}~and~\ref{alg:ACUESA} is essentially explained by the quadratic term $\tnorm{v_k -y_k^{++}}$, which is entirely ignored in the proof of Theorem~\ref{thm3}. In the proof of Theorem~\ref{thm4}, one is able to incorporate an additional term containing $\tnorm{G_L(y_k)}$, which leads to a larger allowable value of $\alpha_k$, and ultimately, a tighter bound for Algorithm~\ref{alg:ACUESA}.

\section{Algorithms and Convergence Guarantees for Smooth Functions}\label{sec:smooth}

Here smooth optimization problems (problem \eqref{eq:Problem} with $h\equiv 0$) are considered, and two gradient descent type algorithms whose iterates form a UES are presented. The algorithms are similar to those for composite problems, but they use the lower bounds developed in Section~\ref{s:lbsmooth}.

\subsection{UES Algorithms for Smooth Functions}
\begin{algorithm}[h!]
	\caption{Smooth Underestimate Sequence Algorithm (SUESA)}
	\label{alg:SUESA}
	\begin{algorithmic}[1]
        \STATE Initialization: Set $k=0$, $\epsilon >0$, initial point $x_0\in\R^n$ and compute $\mu$, $L$.
		\STATE Set $v_0$ and $\phi_0^*$ as in \eqref{eq:c0v0}, and let $\alpha_k= \frac{\mu}{L}$.
		\WHILE {$f(x_k) - \phi_k^* > \epsilon$}
        \STATE Set $y_k = x_k$, $y_k^{+} = x_k^{+}$ and $y_k^{++} = x_k^{++}$.
		\STATE Set $x_{k+1} = x_k - \frac{1}{L} \nabla f(x_k)$.
        \STATE Update $v_{k+1}$ and $\phi_{k+1}^*$ as in \eqref{eq:vkcomp} and \eqref{eq:phistarequiv}, respectively.
		\STATE $k=k+1$.
		\ENDWHILE
	\end{algorithmic}
\end{algorithm}
\begin{algorithm}
	\caption{Accelerated Smooth Underestimate Sequence Algorithm (ASUESA)}
	\label{alg:ASUESA}
	\begin{algorithmic}[1]
		\STATE Initialization: Set $k=0$, $\epsilon >0$, initial point $x_0\in\R^n$ and compute $\mu$, $L$.
\STATE Set $v_0$ and $\phi_0^*$ as in \eqref{eq:c0v0}. Let $\alpha_k= \sqrt{\frac{\mu}{L}}$ and $\beta_k = \frac{1}{1+\alpha_k}$.	
		\WHILE {$f(x_k) - \phi_k^* > \epsilon$}
		\STATE Set $y_k = \beta_k x_k + (1- \beta_k) v_k$.
		\STATE Set $x_{k+1} = y_k - \frac{1}{L} \nabla f(y_k).$
		\STATE Update $v_{k+1}$ and $\phi_{k+1}^*$ as in \eqref{eq:vkcomp} and \eqref{eq:phistarequiv}, respectively.
		\STATE $k=k+1$.
		\ENDWHILE
	\end{algorithmic}
\end{algorithm}

SUESA (Algorithm~\ref{alg:SUESA}) is similar to CUESA, and ASUESA (Algorithm~\ref{alg:ASUESA}) is similar to ACUESA. In the smooth case the proximal gradient is simply the gradient so that the main update in SUESA (see Step~5) is a gradient descent step. The minimizer of the lower bound $\phi_{k+1}^*$ is updated in place of $\varphi_{k+1}^*$ --- this variation means that SUESA is \emph{not} an instance of CUESA when $h = 0$, nor is ASUESA and instance of ACUESA (see Section~\ref{sec:compare} for further details).

\begin{theorem}\label{thm1}
	Let Assumption~\ref{A_SCL} hold. The sequences $\{x_k\}_{k=0}^\infty$, $\{\phi_k(x)\}_{k=0}^\infty$ and $\{\alpha_k\}_{k=0}^\infty$ generated by SUESA (Algorithm~\ref{alg:SUESA}) form a UES.
\end{theorem}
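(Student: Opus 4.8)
The plan is to verify directly that the three sequences $\{x_k\}_{k=0}^\infty$, $\{\phi_k(x)\}_{k=0}^\infty$, $\{\alpha_k\}_{k=0}^\infty$ produced by SUESA satisfy the two requirements \eqref{eq:def1} and \eqref{eq:def2} of Definition~\ref{def:UES}. Requirement \eqref{eq:def1}, namely $\phi_k(x)\le f(x)=F(x)$ for all $x\in\R^n$ and all $k\ge0$, is immediate: SUESA builds $\phi_{k+1}$ by the convex combination \eqref{eq:zzzzz4} with $\alpha_k=\mu/L$, and $\alpha_k\in(0,1)$ (assuming $\mu<L$; if $\mu=L$ then $f$ is quadratic and the problem is trivial), so this is exactly Lemma~\ref{lem:lb}. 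Hence the real content of the theorem is the one-step contraction \eqref{eq:def2}, i.e. $f(x_{k+1})-\phi_{k+1}^*\le(1-\alpha_k)(f(x_k)-\phi_k^*)$.

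For \eqref{eq:def2} I would start from the equivalent expression for $\phi_{k+1}^*$ given in Lemma~\ref{l:equivphikstar}, i.e. \eqref{eq:phistarequiv}, and substitute the SUESA choices $y_k=x_k$ (so $y_k^{++}=x_k^{++}$) together with $x_{k+1}=x_k^+=x_k-\tfrac1L\nabla f(x_k)$. This yields
\[
\phi_{k+1}^* = (1-\alpha_k)\phi_k^* + \alpha_k(1-\alpha_k)\tfrac{\mu}{2}\tnorm{v_k-x_k^{++}} + \alpha_k\big(f(x_k)-\tfrac{1}{2\mu}\tnorm{\nabla f(x_k)}\big).
\]
Inserting this into \eqref{eq:def2}, the $(1-\alpha_k)\phi_k^*$ terms cancel on the two sides and the $f(x_k)$ terms recombine into $f(x_k)$, so \eqref{eq:def2} is seen to be equivalent to the purely primal inequality
\[
f(x_{k+1}) \le f(x_k) - \tfrac{\alpha_k}{2\mu}\tnorm{\nabla f(x_k)} + \alpha_k(1-\alpha_k)\tfrac{\mu}{2}\tnorm{v_k-x_k^{++}}.
\]

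To close this, I would use the descent lemma obtained from $L$-smoothness \eqref{eq:ass2}: applying \eqref{eq:ass2} at $y=x_k$, $x=x_{k+1}=x_k-\tfrac1L\nabla f(x_k)$ gives $f(x_{k+1})\le f(x_k)-\tfrac1{2L}\tnorm{\nabla f(x_k)}$. So it suffices to show $-\tfrac1{2L}\tnorm{\nabla f(x_k)}\le -\tfrac{\alpha_k}{2\mu}\tnorm{\nabla f(x_k)}+\alpha_k(1-\alpha_k)\tfrac{\mu}{2}\tnorm{v_k-x_k^{++}}$. The key point is that the step size $\alpha_k=\mu/L$ is chosen precisely so that $\tfrac{\alpha_k}{2\mu}=\tfrac1{2L}$; the two gradient-norm terms then cancel exactly and the inequality collapses to $0\le\alpha_k(1-\alpha_k)\tfrac{\mu}{2}\tnorm{v_k-x_k^{++}}$, which is trivially true because $\alpha_k\in(0,1)$. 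Combining this with Lemma~\ref{lem:lb} shows the three sequences form a UES.

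I expect no genuine analytic obstacle here; the difficulty is purely organizational. The two things one must get right are (i) using the telescoped form \eqref{eq:phistarequiv} of $\phi_{k+1}^*$, in which the $\tnorm{v_k-y_k^{++}}$ term appears with a manifestly nonnegative coefficient and the $\phi_k^*$ contribution enters as $(1-\alpha_k)\phi_k^*$ so that it cancels against the right-hand side of \eqref{eq:def2}, and (ii) recognizing that the fixed step $\alpha_k=\mu/L$ exactly balances the gradient penalty $\tfrac{\alpha_k}{2\mu}\tnorm{\nabla f(x_k)}$ against the guaranteed decrease $\tfrac1{2L}\tnorm{\nabla f(x_k)}$ from the gradient step. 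Once those are aligned, nonnegativity of a squared norm finishes the argument.
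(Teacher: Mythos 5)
Your proposal is correct and follows essentially the same route as the paper's proof: both rely on Lemma~\ref{lem:lb} for \eqref{eq:def1}, the descent inequality $f(x_{k+1})\le f(x_k)-\tfrac1{2L}\tnorm{\nabla f(x_k)}$ from \eqref{eq:ass2}, the telescoped form \eqref{eq:phistarequiv} of $\phi_{k+1}^*$, the exact cancellation $\tfrac{\alpha_k}{2\mu}=\tfrac1{2L}$ forced by $\alpha_k=\mu/L$, and dropping the nonnegative term $\alpha_k(1-\alpha_k)\tfrac{\mu}{2}\tnorm{v_k-y_k^{++}}$. The only difference is cosmetic (you rearrange \eqref{eq:def2} into an equivalent primal inequality before invoking the descent lemma, whereas the paper subtracts $\phi_{k+1}^*$ directly from the descent inequality), and your aside about requiring $\mu<L$ for $\alpha_k\in(0,1)$ is a reasonable caveat the paper leaves implicit.
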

\begin{proof}
By Lemma~\ref{lem:lb}, \eqref{eq:def1} holds. To prove \eqref{eq:def2}, combining the definition of $x_{k+1}$ (Step~5 in Algorithm~\ref{alg:SUESA}) with \eqref{eq:ass2}, and noting that $x_k=y_k$, gives
	\begin{eqnarray}\label{eq:zzzz3}
		f(x_{k+1}) = f(y_k^+) &\leq& f(y_k)  - \tfrac{1}{2L} \tnorm{\nabla f(y_k)}.
	\end{eqnarray}
It is straightforward to establish that $f(x_{k+1}) - \phi_{k+1}^*\leq (1- \alpha_k) (f(x_k) -\phi_k^*)$ using \eqref{eq:phistarequiv} and \eqref{eq:zzzz3}, and noting that $\alpha_k = \mu/L\in (0,1)$ for all $k\geq0$.
\end{proof}
\begin{corollary}\label{coro1}
Let Assumption~\ref{A_SCL} hold.  Then the sequences $\{x_k\}_{k=0}^\infty$, $\{\phi_k(x)\}_{k=0}^\infty$ and $\{\alpha_k\}_{k=0}^\infty$ generated by SUESA (Algorithm~\ref{alg:SUESA}) form a UES, so SUESA converges at a linear rate: $f(x_k) -\phi_k^* \leq (1- \tfrac{\mu}{L})^k (f(x_0) -\phi_0^*).$
\end{corollary}

\begin{theorem}\label{thm2}
Let Assumption~\ref{A_SCL} hold. The series of sequences $\{x_k\}_{k=0}^\infty$, $\{\phi_k(x)\}_{k=0}^\infty$ and $\{\alpha_k\}_{k=0}^\infty$ generated by ASUESA in Algorithm~\ref{alg:ASUESA} form a UES.
\end{theorem}
\begin{proof}
Note that, rearranging the expression for $y_k$ in Step~4 of Algorithm~\ref{alg:ASUESA} gives \eqref{eq:vk} and because $\beta_k = \tfrac{1}{1+\alpha_k}$, \eqref{eq:alphabeta1} holds. Thus, by the convexity of $f$ we have
	\begin{eqnarray}\label{eq:pqr}
		-\alpha_k\langle\nabla f(y_k),v_k -y_k\rangle \notag
		&\overset{\eqref{eq:vk}}{=}& -\alpha_k\langle\nabla f(y_k),y_k + \tfrac{\beta_k}{1- \beta_k} (y_k-x_k) -y_k\rangle \notag\\
		&\leq& \tfrac{\alpha_k\beta_k}{1- \beta_k} \big(f(x_k) - f(y_k)  \big) \overset{\eqref{eq:alphabeta1}}{=} f(x_k) - f(y_k) .
	\end{eqnarray}
To confirm that $f(x_{k+1})-\phi_{k+1}^*\leq (1-\alpha_k) (f(x_k)-\phi_k^*)$, combine \eqref{eq:phistarequiv} and \eqref{eq:zzzz3}, complete the square, apply \eqref{eq:pqr} and note that $\alpha_k = \sqrt{\frac{\mu}{L}}$.\qed	
\end{proof}

\begin{corollary}\label{coro2}
Let Assumption~\ref{A_SCL} hold.  Then the sequences $\{x_k\}_{k=0}^\infty$, $\{\phi_k(x)\}_{k=0}^\infty$ and $\{\alpha_k\}_{k=0}^\infty$ generated by ASUESA (Algorithm~\ref{alg:ASUESA}) form a UES, so ASUESA converges at the optimal linear rate: $f(x_k) -\phi_k^* \leq (1- \sqrt{\tfrac{\mu}{L}})^k (f(x_0) -\phi_0^*).$
\end{corollary}

\subsection{A Toy Example}\label{sec:toyeg}


Notice that for SUESA, at iteration $k\geq 0$, the distance between $x_k$ and the minimizer of the lower bound shrinks at a fixed rate. That is, since $\alpha_k = \tfrac{\mu}{L}$, the following equality holds:
\begin{align}\label{eq:xkvk}
	x_{k+1} - v_{k+1} &= \left(x_k - \tfrac{1}{L} \nabla f(x_k)\right) - \left((1- \alpha_k)v_k + \alpha_k (x_k - \tfrac{1}{\mu} \nabla f(x_k) )\right) \notag\\
	& = (1- \tfrac{\mu}{L}) (x_k - v_k).
\end{align}
Equation \eqref{eq:xkvk} illustrates that, after each iteration of Algorithm~\ref{alg:SUESA}, the line joining $x_{k+1}$ and $v_{k+1}$ is parallel to the line joining $x_k$ and $v_k$. Moreover, the distance between the two points is reduced by precisely $(1- \tfrac{\mu}{L})$ at every iteration. Intuitively, $x_k$ and the minimizer $v_k$ are becoming ever closer, and eventually they both converge to $x^*$ (recall Theorem~\ref{thm1}).

This fact can be visualized via the following toy example. Consider the (smooth) regularized logistic regression problem, i.e., problem \eqref{eq:Problem} with $h=0$ and
\begin{align*}
	f(x) = \sum_{i=1}^m \log(1+\exp(-y_i \ve{x}{a_i})) + \tfrac \lambda 2 \tnorm{x},
\end{align*}
where $a_i \in \R^n$ is the $i$th feature vector with corresponding (binary) label $y_i\in \R$.
For this example, 100 two dimensional data points with binary labels $\{a_i, y_i\}$ were randomly generated (so $m=100$ and $n=2$), and these points and labels are shown in the top left plot in Figure~\ref{fig:toyexample}. (Each point $a_i$ is plotted on a 2D grid, and the point is colored green or red to highlight its label $y_i$). Parameter $\lambda= 0.01$, so the strong convexity constant is $\mu=0.01$. SUESA and ASUESA were used to solve this problem, starting from the point $x_0 = (-20,10)^T$. Both algorithms were stopped after 100 iterations and the results of this experiment are shown in Figure~\ref{fig:toyexample}. The optimal solution is $x^* = (-0.2610,0.2407)^T$ with corresponding optimal value $F^* \equiv f^* = 0.6779$ (all to 4dp).


\begin{figure}[H]\centering
\hspace{-4mm}
\includegraphics[scale=.26]{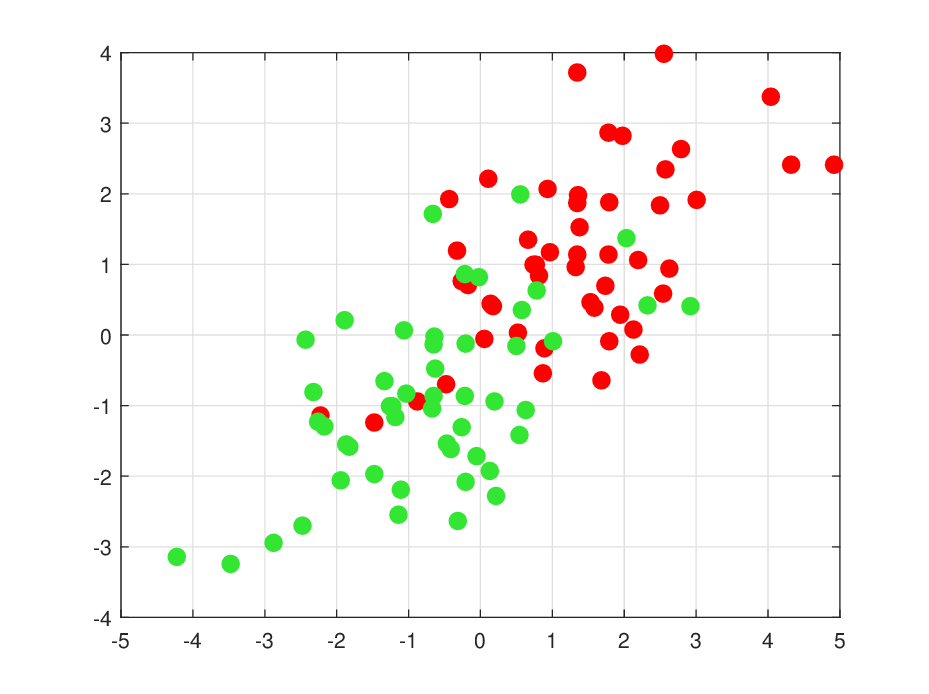}\hspace{-2mm}
\includegraphics[scale=.26]{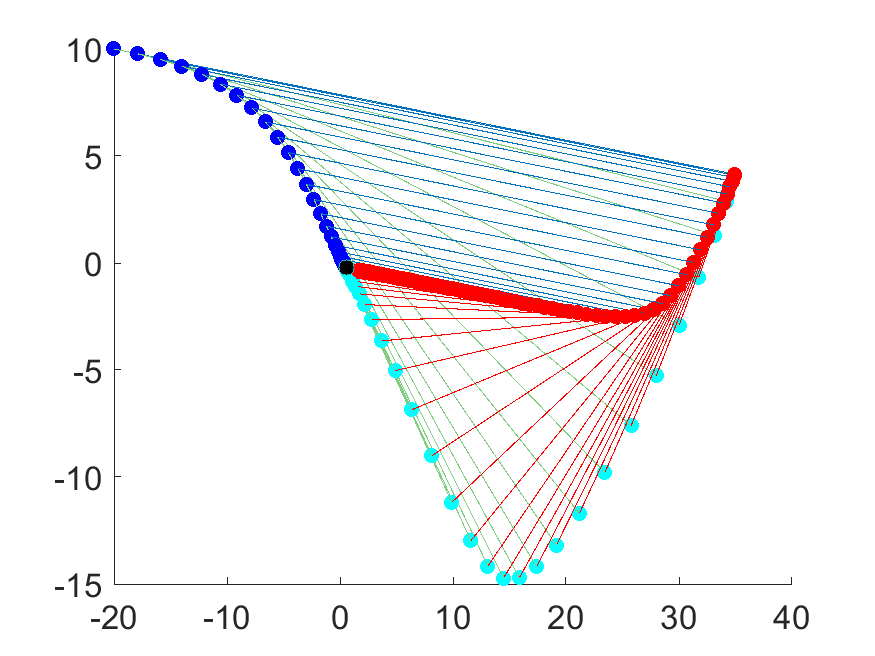}\hspace{-2mm}
\includegraphics[scale=.26]{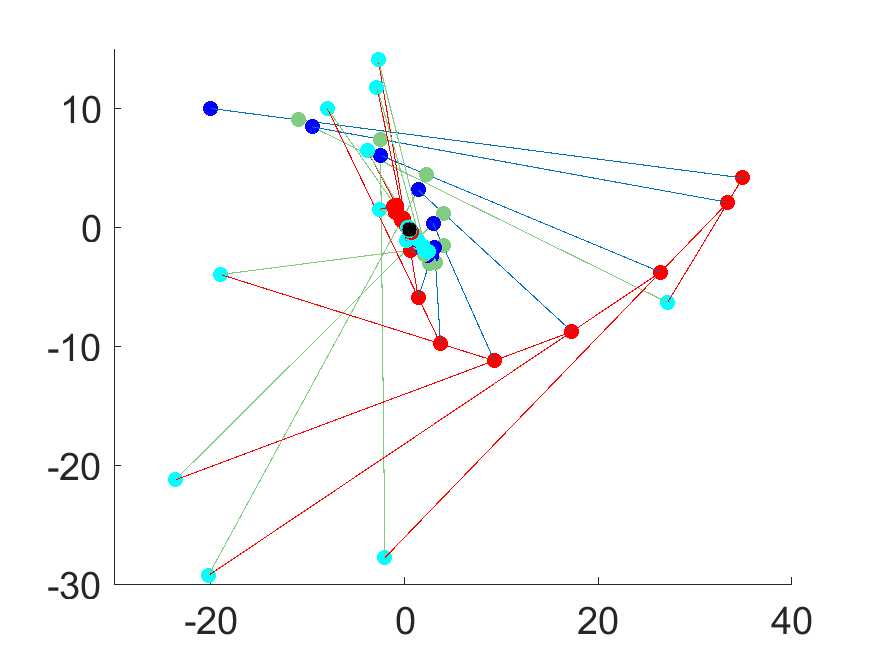}\\
\includegraphics[scale=.28]{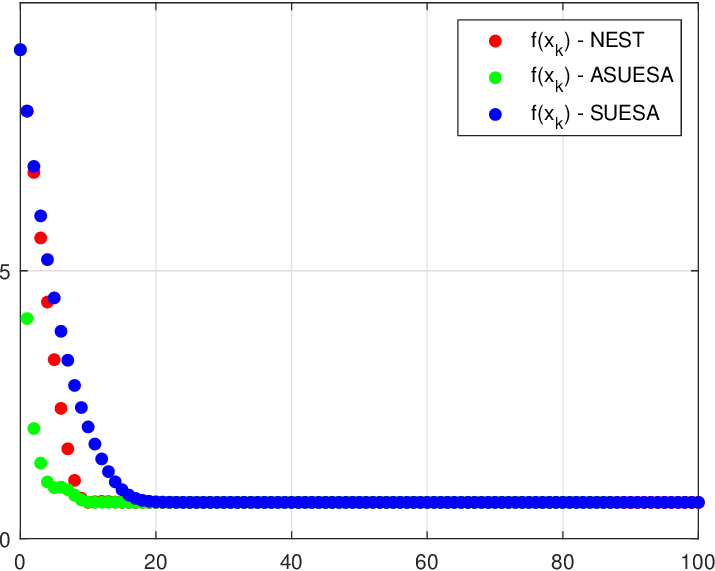}\hspace{3mm}
\includegraphics[scale=.28]{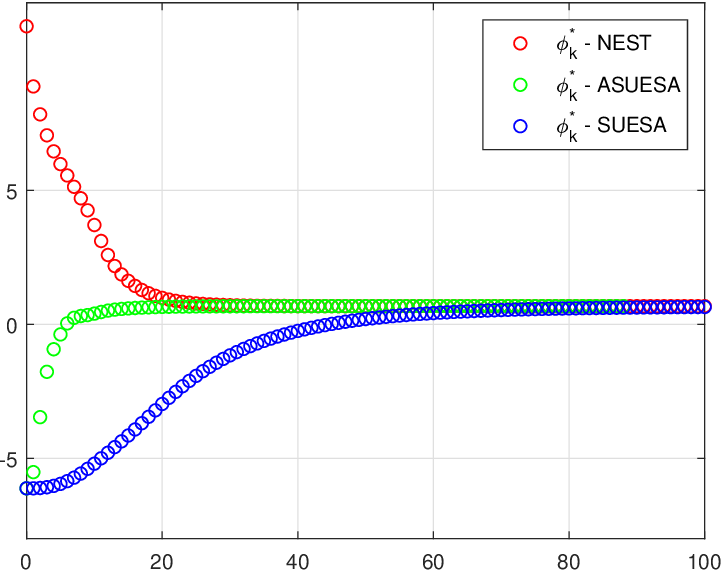}\hspace{3mm}
\includegraphics[scale=.28]{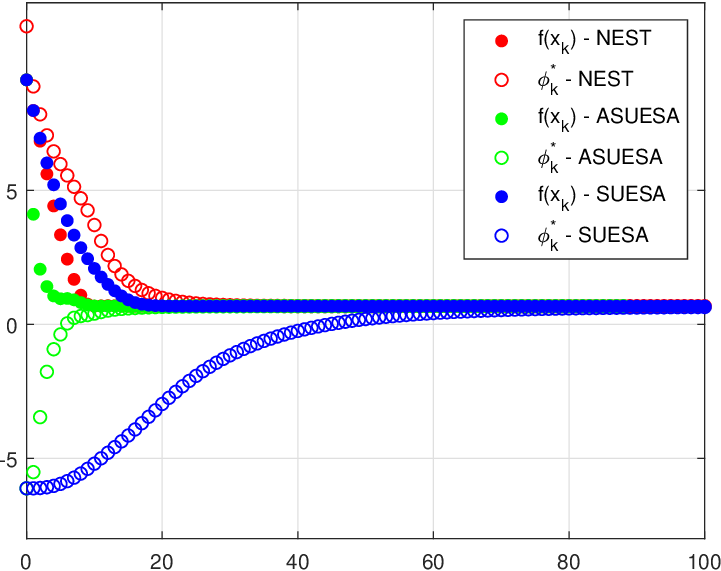}\\
\includegraphics[scale=.28]{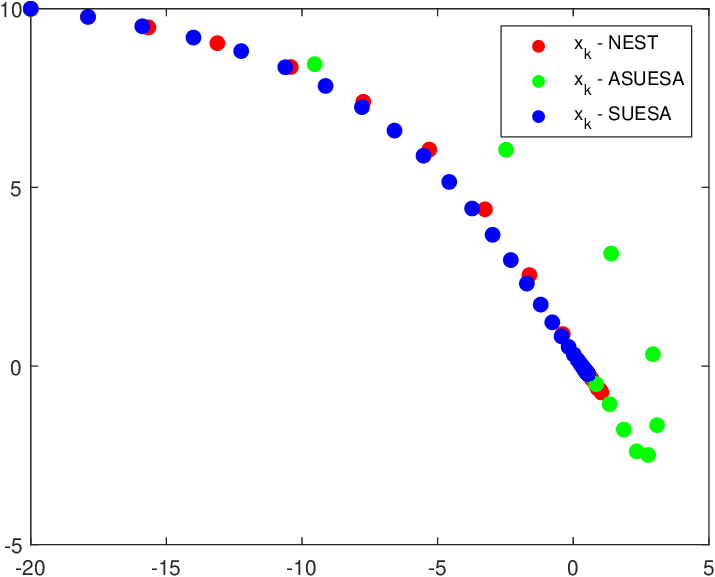}\hspace{3mm}
\includegraphics[scale=.28]{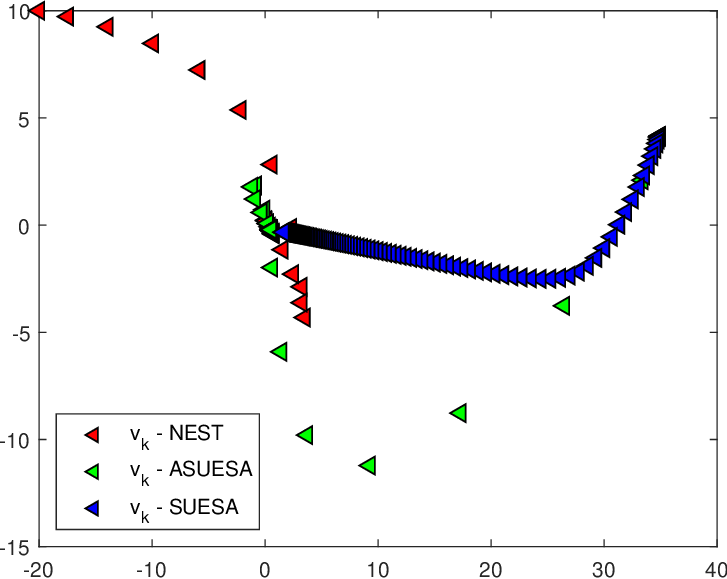}\hspace{3mm}
\includegraphics[scale=.28,trim={1cm 0 0 0},clip]{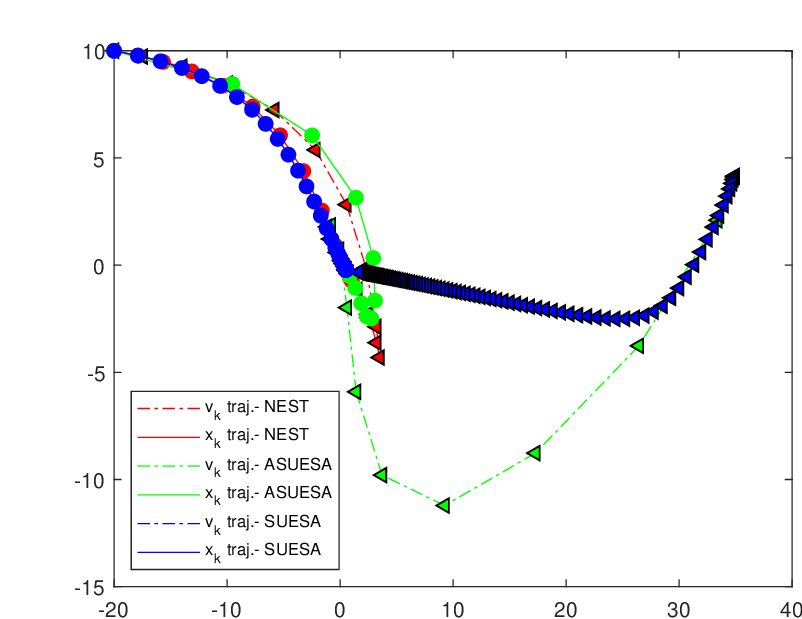}
\caption{SUESA and ASUESA applied to the toy example described in Section~\ref{sec:toyeg}. Top row from left: the simulated data; the iterates generated by SUESA; the iterates generated by ASUESA.
Middle row from left: the function values $f(x_k)$ for SUESA, ASUESA and NEST; the values $\phi_k^*$ for SUESA, ASUESA and NEST algorithm; both $f(x_k)$ and $\phi_k^*$ for SUESA, ASUESA and NEST.
Bottom row from left: the iterates $x_k$ generated by SUESA, ASUESA and NEST; the points $v_k$ generated by SUESA, ASUESA and NEST; both $x_k$ and $v_k$ for SUESA, ASUESA and NEST.}
\label{fig:toyexample}
\end{figure}

%
%
%
%
%

Consider the iterates generated by SUESA in Figure~\ref{fig:toyexample}. The blue dots represent the iterates $x_k$, starting from $x_0 = (-20,10)^T$. The aqua dots are the long steps $x_k^{++}$. Notice that the iterates (short steps) $x_k$, $x_k^{+} (\equiv x_{k+1})$ and long steps $x_k^{++}$ all lie on the same (aqua) line. This is simply because $x_k^+$ and $x_k^{++}$ are the result of starting from $x_k$ and then moving in the direction $-\nabla f(x_k)$, scaled by $1/L$ or $1/\mu$, respectively. So all the aqua lines correspond to the negative gradient directions. The red dots correspond to the points $v_k$. Initially $v_0 = x_0^{++} \approx (35,5)^T$ so the first red and aqua point coincide. The blue lines join $x_k$ and $v_k$, and they are all parallel. As SUESA progresses, the points $x_k$ (blue) and $v_k$ (red) eventually converge to the minimizer (black dot).

The last plot on the first row in Figure~\ref{fig:toyexample} shows the iterates of ASUESA. Additionally for ASUESA, the green points correspond to the iterates $y_k$, and one notices that these points lie on the line joining $x_k$ and $v_k$.


The second row of Figure~\ref{fig:toyexample} shows (left) the function values $f(x_k)$, (middle) the bounds $\phi_k^*$, and (right) both the function values $f(x_k)$ and bounds $\phi_k^*$ for SUESA, ASUESA and NEST for the toy problem. The middle plot shows that the bounds $\phi_k^*$ for SUESA and ASUESA are \emph{lower bounds for} $f(x_k)$, while the bound $\phi_k^*$ for NEST is an upper bound for $f(x_k)$. This is why a verifiable stopping condition can be used for SUESA and ASUESA (the function values decrease, the lower bounds increase, and the gap sandwiches the optimal function value $f(x^*)$, squeezing closer to it). On the other hand, for NEST, $\phi_k^*$ pushes $f(x_k)$ down from above, but provides no information about how close the current value $f(x_k)$ is from the optimal value $f(x^*)$.

The third row of Figure~\ref{fig:toyexample} shows (left) the iterates $x_k$, (middle) the points $v_k$, and (right) both the iterates $x_k$ and the points $v_k$ for SUESA, ASUESA and NEST for the toy problem. The left and middle plots clearly show that the iterates $x_k$ and points $v_k$ are different for each algorithm. \emph{ASUESA is not the same as NEST}. The trajectory of the iterates $x_k$ (and the points $v_k$) are distinct for each algorithm. This confirms that the algorithms presented here are new, and are not simply an existing algorithm with an added lower bound.

\subsection{A comparison of the composite and smooth algorithms when $h\equiv0$}\label{sec:compare}

A natural question to ask is, `Do the composite algorithms (CUESA and ACUESA) recover the smooth algorithms (SUESA and ASUESA) when $h \equiv 0$?'. Ultimately, the answer to this question is no, but this is a consequence of the termination condition (i.e., the lower bounds), rather than the path of the iterates.

Table~\ref{table:compare} compares the vectors generated by ASUESA (Algorithm~\ref{alg:ASUESA}) and ACUESA (Algorithm~\ref{alg:ACUESA}) in the smooth case for a given initial point $x_0\in \R^n$. Note that the same values $\alpha_k$ and $\beta_k$ are used by both algorithms, and for $h\equiv 0$, $G_L(y_k) \equiv \nabla f(y_k)$. Table~\ref{table:compare} confirms that the iterates generated by ASUESA and ACUESA follow the same path.

\begin{table}[h!]\centering
  \begin{tabular}{|c|c|c|c|c|c|}
  \hline
  Variable & ACUESA($h\equiv 0$)&  &ASUESA &   \\
  \hline
  $v_0$ & $x_0^{++}$ & \eqref{eq:minvalminmize}& $x_0^{++}$& \eqref{eq:c0v0}  \\
  $y_k$ &$\beta_k x_k + (1- \beta_k) v_k$ & Step~4 & $\beta_k x_k + (1- \beta_k) v_k$ & Step~4  \\
  $x_{k+1}$ & $y_k - \frac1L \nabla f(y_k)$& Step~5 & $y_k - \frac1L \nabla f(y_k)$& Step~5 \\
  $v_{k+1}$ & $(1-\alpha_k)v_k + \alpha_k y_k^{++}$& \eqref{eq:vkcomp}& $(1- \alpha_k) v_k + \alpha_k y_k^{++}$ &  \eqref{eq:vkcomp}\\
  \hline
\end{tabular}
\caption{The vectors generated by ACUESA and ASUESA when $h\equiv 0$.}
\label{table:compare}
\end{table}

However, consider the stopping gap for each algorithm, i.e., compare
\begin{eqnarray}\label{eq:Gap_ASUESA}
  f(x_{k+1}) - \phi_{k+1}^* &\overset{\eqref{eq:phistarequiv}}{=}& f(x_{k+1}) - (1-\alpha_k)\alpha_k\tfrac{\mu}2\tnorm{v_{k}-y_k^{++}}  + \tfrac{\alpha_k}{2\mu}\tnorm{\nabla f(y_k)}
    \notag\\
    && - (1-\alpha_k)\phi_k^* - \alpha_k f(y_k).
\end{eqnarray}
with
\begin{eqnarray}\label{eq:Gap_ACUESA}
    f(x_{k+1}) - \varphi_{k+1}^* &\overset{\eqref{eq:varphistarequiv}}{=}& f(x_{k+1}) - (1-\alpha_k)\alpha_k\tfrac{\mu}2\tnorm{v_{k}-y_k^{++}}  + \tfrac{\alpha_k}{2\mu}\tnorm{\nabla f(y_k)}
    \notag\\
    && - (1-\alpha_k)\varphi_k^* - \alpha_k f(y_k^+) - \alpha_k\tfrac1{2L}\tnorm{\nabla f(y_k)}.
\end{eqnarray}
At every iteration, by \eqref{eq:zzzz3} it holds that $- f(y_k)\leq -f(y_k^+) -\tfrac{\alpha_k}{2L} \tnorm{\nabla f(y_k)}$ and so we conclude that  $f(x_{k+1}) - \phi_{k+1}^* \leq f(x_{k+1}) - \varphi_{k+1}^*$ for all $k\geq 0$. This confirms that, in the smooth case with $h \equiv 0$, ACUESA does not recover ASUESA. Moreover, it is advantageous to use ASUESA because the algorithm will terminate sooner than ACUESA for $h\equiv 0$.

Similar arguments can be used to show that CUESA and SUESA are not the same algorithm in the smooth case, and that it is advantageous to use SUESA because SUESA will terminate earlier than CUESA.

\section{An algorithm with adaptive $L$}\label{sec:adaptiveL}

In all of the algorithms presented so far, the Lipschitz constant $L$ is explicitly used. However, by studying the convergence proofs for Algorithms~\ref{alg:CUESA}--\ref{alg:ASUESA} one observes that the role of $L$ is to enforce a reduction in the function value from one iteration to the next. Thus, it is natural to ask the question, `Can an \emph{adaptive} Lipschitz constant, say $L_k$, be used in place of the true Lipschitz constant $L$, while preserving convergence guarantees?'. This is discussed now.

When the Lipschitz constant $L$ is unknown, or is expensive to compute, it may be preferable to employ an `adaptive' Lipschitz constant, say $L_k$, that approximates $L$ locally. This approach, studied by Nesterov in \cite{Nesterov07,Nesterov13}, has the additional advantage that $L_k$ may be smaller than the true Lipschitz constant $L$, which can lead to large step sizes. To maintain convergence properties for Algorithms~\ref{alg:CUESA}--\ref{alg:ASUESA}, certain inequalities must hold; the relevant inequalities are as follows.

\paragraph{Non-smooth case.}
For composite functions, \eqref{eq:compFreduce} and \eqref{eq:Fkp11} must hold for CUESA and ACUESA, respectively. So, if $L_k$ satisfies
\begin{eqnarray}
\label{eq:comAdapL}
  F\left(y_k - \tfrac1{L_k} G_{L_k}(y_k)\right) \leq F(y_k) - \tfrac1{2L_k} \tnorm{G_{L_k}(y_k)},
\end{eqnarray}
then the algorithms are still guaranteed to converge. If $L_k$ satisfies \eqref{eq:comAdapL}, then one obtains the improvement $\alpha_k = \mu/L_k$ (or $\alpha_k = \sqrt{\mu/L_k}$ for the accelerated case) at every iteration.

\paragraph{Smooth case.} For smooth functions, \eqref{eq:zzzz3} must hold for SUESA and ASUESA. This means that at every iteration, if $L_k$ satisfies
\begin{equation}\label{eq:smoAdapL}
f\left(y_k - \tfrac1{L_k} \nabla f(y_k) \right)
\leq f(y_k ) - \tfrac1{2 L_k} \tnorm{\nabla f(y_k)},
\end{equation}
then convergence guarantees for SUESA and ASUESA are maintained.
If $L_k$ satisfies \eqref{eq:smoAdapL}
then we have the improvement $\alpha_k = \mu/L_k$ (or $\alpha_k = \sqrt{\mu/L_k}$ for the accelerated case) at every iteration.

With these two inequalities in mind, the adaptive Lipschitz process is described now. Initialize Algorithms~\ref{alg:CUESA}--\ref{alg:ASUESA} with the estimate $0 < \mu \leq L_0\leq L$, and increase and decrease factors $u>1$ and $d\geq 1$, respectively. At each iteration $k\geq1$ of Algorithms~\ref{alg:CUESA}--\ref{alg:ASUESA}, an inner loop (described in Algorithm~\ref{alg:adapL}) is employed to find the approximation $L_k$. The inner loop (Algorithm~\ref{alg:adapL}) replaces Steps~4--5 in Algorithms~\ref{alg:CUESA}--\ref{alg:ASUESA}. The psuedocode is presented now, and the inner iteration counter $s$ in used in Algorithm~\ref{alg:adapL} to distinguish it from the outer loop iteration counter $k$.

\begin{algorithm}
	\caption{Finding $L_k$ in iteration $k$ of Algorithms~\ref{alg:ASUESA} and \ref{alg:ACUESA}.}
	\label{alg:adapL}
	\begin{algorithmic}[1]
		\STATE Input: $x_k, v_k$ $u>1$, $d \geq 1$, $0 < \mu < L_0\leq L$ and $L_{k-1}$.
        \STATE Initialize: $L_k^{(s)} = \max\{L_0,L_{k-1}/d\}$.
		\FOR{$s = 0,1,2,\dots$}
		\STATE  $\alpha_k^{(s)} = \sqrt{\tfrac{\mu}{L_k^{(s)}}}$, $\beta_k^{(s)}  = \tfrac{1}{1+\alpha_k^{(s)} }$.\label{ref:mmmmm0}
		\STATE Set $y_k^{(s)}  = \beta_k^{(s)}  x_k + (1- \beta_k^{(s)} ) v_k$.
		\STATE Set $x_{k+1}^{(s)}  = y_k^{(s)} - \frac{1}{L_k^{(s)}} G_{L_k^{(s)}}(y_k^{(s)}).$
\IF{\eqref{eq:smoAdapL} or \eqref{eq:comAdapL} holds}
    \STATE Break.
  \ELSE
    \STATE $L_k^{(s+1)}$ = $u \cdot L_k^{(s)}$.\label{ref:mmmmm1}
  \ENDIF
\ENDFOR
\STATE Output: $L_k = L_k^{(s)}$, $\alpha_k = \alpha_k^{(s)}$, $\beta_k = \beta_k^{(s)}$, $y_k = y_k^{(s)}$ and $x_{k+1} = x_{k+1}^{(s)}$.
	\end{algorithmic}
\end{algorithm}

Algorithm~\ref{alg:adapL} begins with the estimate $L_k^{(0)}=\max\{L_0,L_{k-1}/d\}$. This ensures that (i) $L_k \geq L_0\geq \mu > 0$ so that convergence of the outer loop is maintained (because $\alpha_k = \mu/L_k \in (0,1)$) must hold); and allows that possibility that (ii) $L_k^{(0)}$ is $\frac1d$ times smaller than $L_{k-1}$ so that large step sizes might be used. Next, $L_k^{(s)}$ is (possibly repeatedly) multiplied by the increase factor $u$ until \eqref{eq:smoAdapL} (or \eqref{eq:comAdapL}) is satisfied at inner iteration $S$, at which point, $L_k = L_k^{(S)}$ is passed to the outer loop and iteration $k$ continues with $L_k$ used in place of $L$. Following this process, it possibly occurs that at some iteration $k$, $L_k < L$. In this case, the stepsize $1/L_k$ is used, which is larger than $1/L$.

The strategy above holds for  Algorithms~\ref{alg:ACUESA} and \ref{alg:ASUESA}, but it is straightforward to adapt it to Algorithms~\ref{alg:CUESA} and \ref{alg:SUESA}  by modifying the variables as $\alpha_s = \mu/L_s$ and $\beta_s =1$.

Using the arguments in \cite[Section~3]{Nesterov13}, the value $L_k$ can increase only if $L_k \leq L$. Thus, by Assumption~\ref{A_SCL} (Lipschitz continuity), the following chain of inequalities holds
\begin{equation}
  0 < \mu \leq L_0 \leq L_k \leq u \cdot L.
\end{equation}
The following lemma bounds the number of inner iterations.

\begin{lemma}[Lemma~4 in \cite{Nesterov13}]
Let Assumption~\ref{A_SCL} hold, and let $u >1$, $d \geq 1$ and $ 0<\mu \leq L_0 \leq L$.
Then the maximum number of times that Lines~\ref{ref:mmmmm0}~to~\ref{ref:mmmmm1} of Algorithm~\ref{alg:adapL} are executed during the first $K$ iterations of Algorithm~\ref{alg:ACUESA} (or Algorithm~\ref{alg:ASUESA}), say $N_k$, is bounded by
\begin{equation}
  N_K \leq \left(1+\frac{\ln d}{\ln u}\right)\cdot(K+1) + \frac1{\ln u} \max\left\{\ln \frac{u \cdot \ln L}{d \cdot \ln L_0},0\right\}.
\end{equation}
\end{lemma}

\section{Numerical Experiments}\label{sec:numericalexperiments}

In this section, we present numerical results to compare our proposed algorithms with several other methods that have an optimal convergence rate. The algorithms are as follows, and are summarized in Table~\ref{table-DiscAlgs}.

\emph{OQA/OQA+.} The Optimal Quadratic Averaging algorithm (OQA) \cite{Drusvyatskiy16}, which builds upon the work in \cite{Bubeck15}, maintains a quadratic lower bound on the objective function value at every iteration. The quadratic lower bound is called `optimal' because it is the `best' lower bound that can be obtained as a convex combination of the previous 2 quadratic lower bounds. In OQA, $x_{k+1}$ is set to be the minimizer of $f(x)$ on the line joining the points $x_k^+$ and the minimizer of the current quadratic lower bound. In \cite{Drusvyatskiy16} the author suggest a variant of OQA, which we call OQA+ here, that computes $x_{k}^+$ via a line search that does not use the true Lipschitz constant $L$.

\emph{NEST/NEST+.} NEST denotes the algorithm described in Chapter 2 of \cite{Nesterov04}, which is for \emph{smooth} functions. Further, NEST+ is a variant of NEST in which the Lipschitz constant $L$ is adaptively update via the strategy in \cite{Nesterov07,Nesterov13}. Both variants are \emph{accelerated} algorithms.

\emph{CNEST/CNEST+.} CNEST denotes Algorithm (4.9) in \cite{Nesterov07}, which can be applied to \emph{composite nonsmooth} functions. Further, CNEST+ is a variant of CNEST in which the Lipschitz constant $L$ is adaptively update via the strategy in \cite{Nesterov07,Nesterov13}. Both variants are \emph{accelerated} algorithms.

\emph{GD.} We also implement a Gradient Descent (GD) method which uses a fixed stepsize of $\tfrac{1}{L}$. Note that this is similar to Algorithm~\ref{alg:SUESA}, although GD does not maintain any kind of lower bound. As the only non-optimal algorithm, Gradient Descent provides a benchmark that will enable us to observe any performance advantages of the optimal methods.

\begin{table}[H]\tiny
	\centering
	\begin{tabular}{l|l} \toprule
		{Algorithm} & {Description}  \\ \midrule
		{OQA} &  {Optimal Quadratic Averaging Algorithm} \\
		{OQA+}  & {Optimal Quadratic Averaging Algorithm with $x_k^+= \text{line-search} (x_{k},x_k-\nabla f(x_k))$}   \\
        {SUESA}  & {Smooth Underestimate Sequence Algorithm}   \\
		{ASUESA}  & {Accelerated Smooth Underestimate Sequence Algorithm}\\
		{ASUESA+}  & {Accelerated Smooth Underestimate Sequence Algorithm with adaptive Lipschitz constant} \\
		{NEST}   & {Accelerated algorithm described in Chapter 2 of \cite{Nesterov04}} \\
		{NEST+}  & {Accelerated algorithm described in Chapter 2 of \cite{Nesterov04} with adaptive Lipschitz constant}  \\\midrule
		{CUESA}  &  {Composite Underestimate Sequence Algorithm}     \\
		{CUESA+} &  {Composite Underestimate Sequence Algorithm with adaptive Lipschitz constant} \\
		{ACUESA} &  {Accelerated Composite Underestimate Sequence Algorithm}  \\
		{ACUESA+} &  {Accelerated Composite Underestimate Sequence Algorithm with adaptive Lipschitz constant} \\
		{CNEST}  &  {Accelerated Algorithm (4.9) in \cite{Nesterov07} with fixed Lipschitz constant}     \\
		{CNEST+}   & {Accelerated Algorithm (4.9) in \cite{Nesterov07} with adaptive Lipschitz constant} \\\bottomrule
	\end{tabular}
	\normalsize
	\caption{Description of implemented algorithms}
\label{table-DiscAlgs}
\end{table}

\subsection{Empirical Risk Minimization}

We consider two Empirical Risk Minimization (ERM) problems, which are popular in the machine learning literature. In particular, we study ERM with a squared hinge loss
\begin{equation}\label{eq:squaHingeLoss}
f(x) =  \frac{1}{m} \sum_{i=1}^{m} \big(\max \{0,1 - y_ia_i^Tx\}\big)^2 +  \tfrac{\lambda}{2}\tnorm{x},
\end{equation}
and ERM with a logistic loss (also called logistic regression)
\begin{equation}\label{eq:logisticRe}
f(x) = \frac{1}{m} \sum_{i=1}^{m} \log\big( 1+ e^{-y_ia_i^Tx}\big) +  \tfrac{\lambda}{2}\tnorm{x}.
\end{equation}
In each case $y_i \in\{-1,+1\}$ is the label and $a_i \in \R^n$ represents the training data for $i = 1,2,...,m$. All the datasets in our experiments come from LIBSVM database \cite{Chang11}. Also note that for all experiments we have $\mu = \lambda$.

Note that in these experiments, when implementing a line search in direction $v$ (whenever relevant), the dot products $a_i^Tv$ are reused.

\subsubsection*{Comparison on Decreasing Objective Values}

In the first experiment we compare the OQA, ASUESA and NEST algorithms (both the standard and adaptive Lipschitz variants) and investigate how the objective function values behave on several test problems. The test problems considered in this experiment are the \texttt{ala} dataset with a squared hinge loss and a value $\lambda = 10^{-4}$, the \texttt{rcv1} dataset with a logistic loss and a value $\lambda = 10^{-4}$, and the \texttt{covtype} dataset with a squared hinge loss and a value $\lambda = 10^{-5}$.
\begin{figure*}[h!]
	\centering
	\includegraphics[scale=.15]{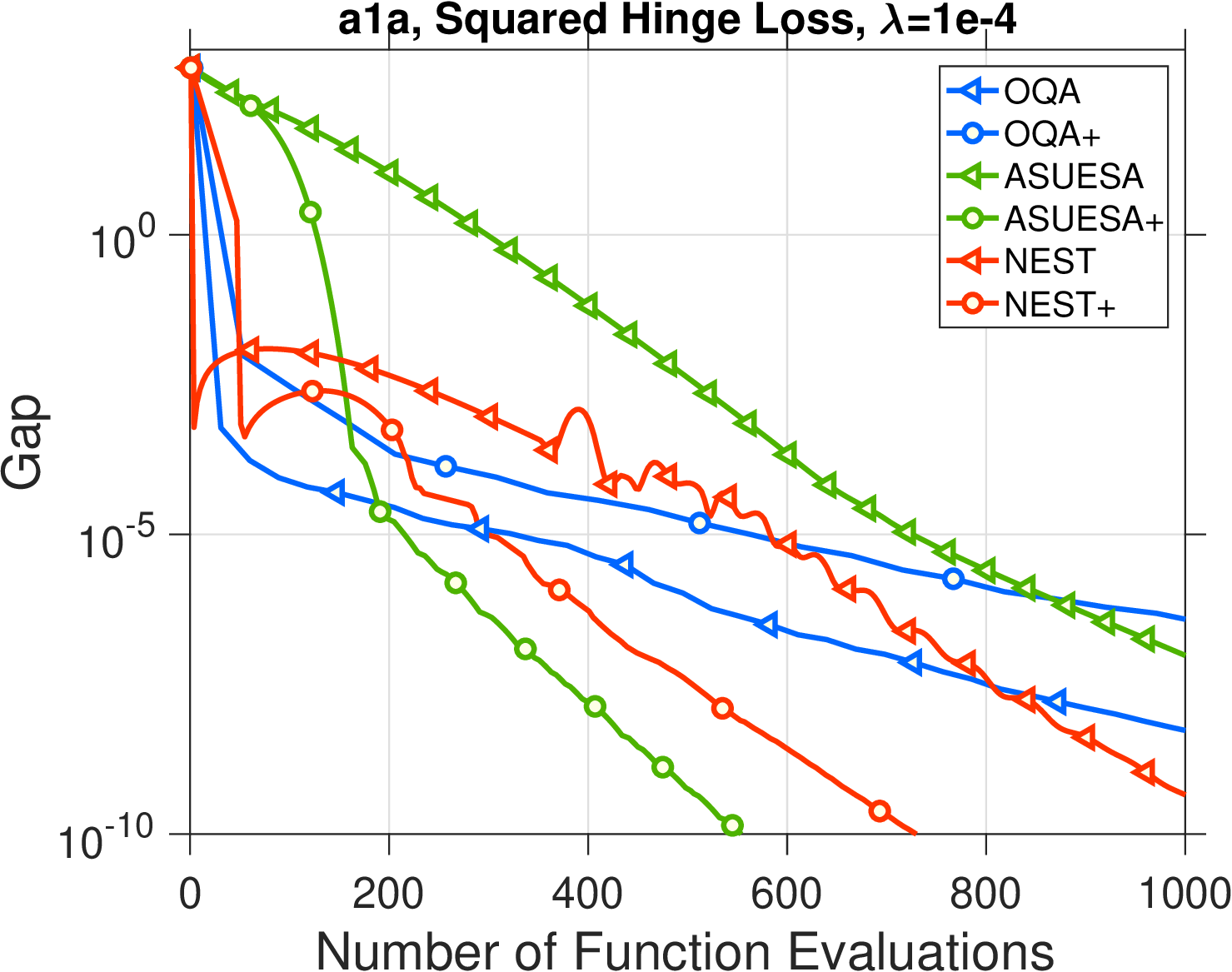}
    \includegraphics[scale=.15]{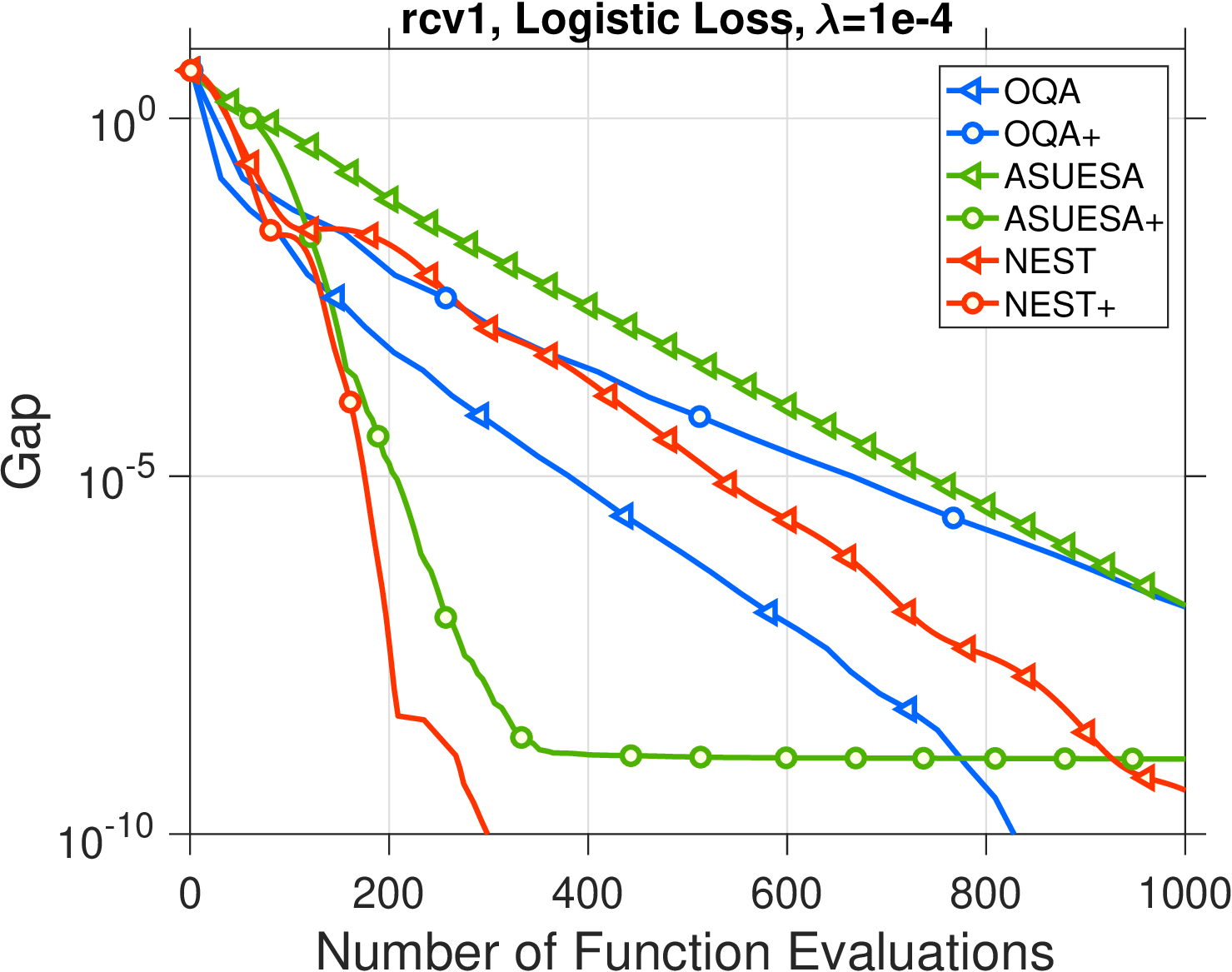}
    \includegraphics[scale=.15]{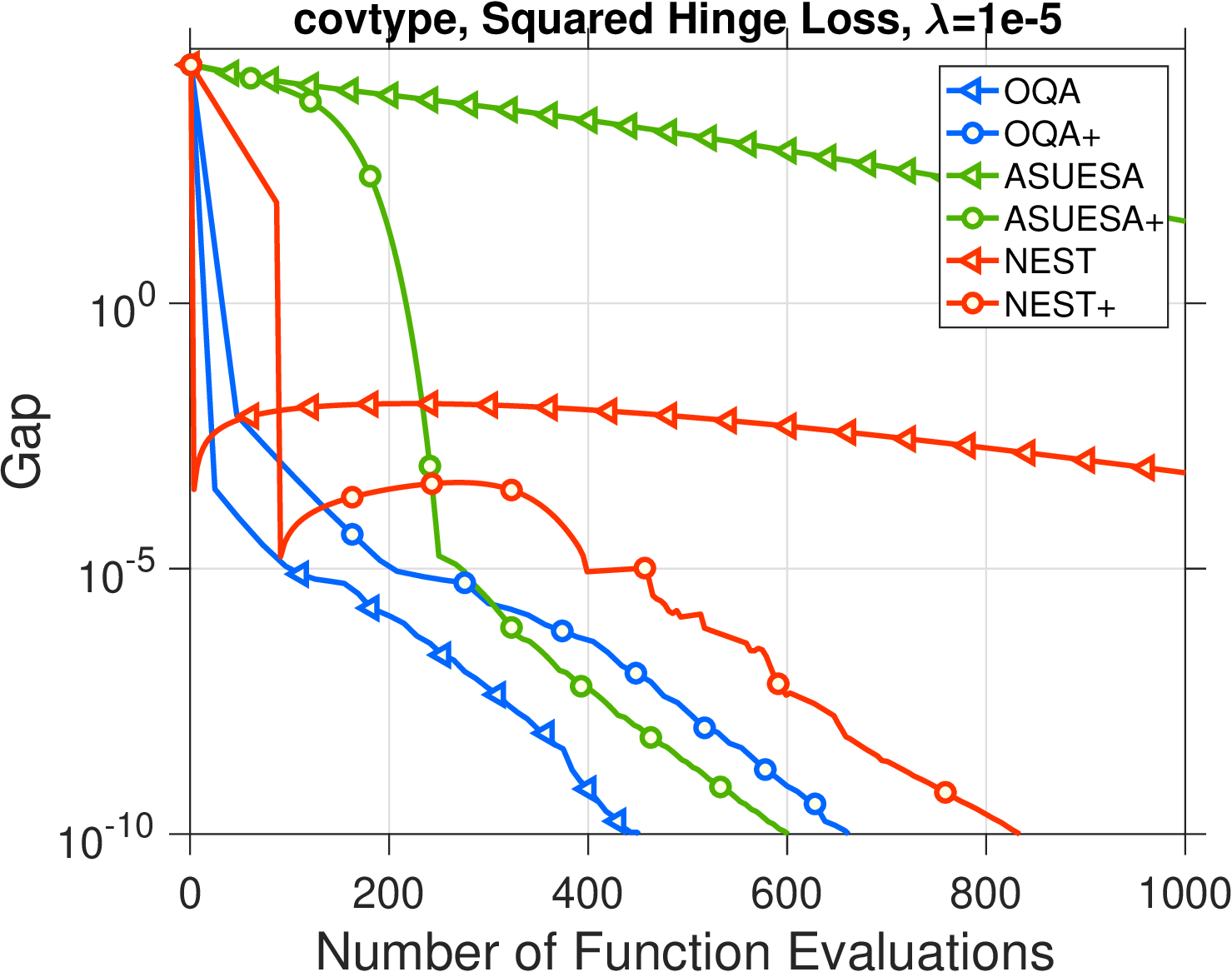}
	\includegraphics[scale=.15]{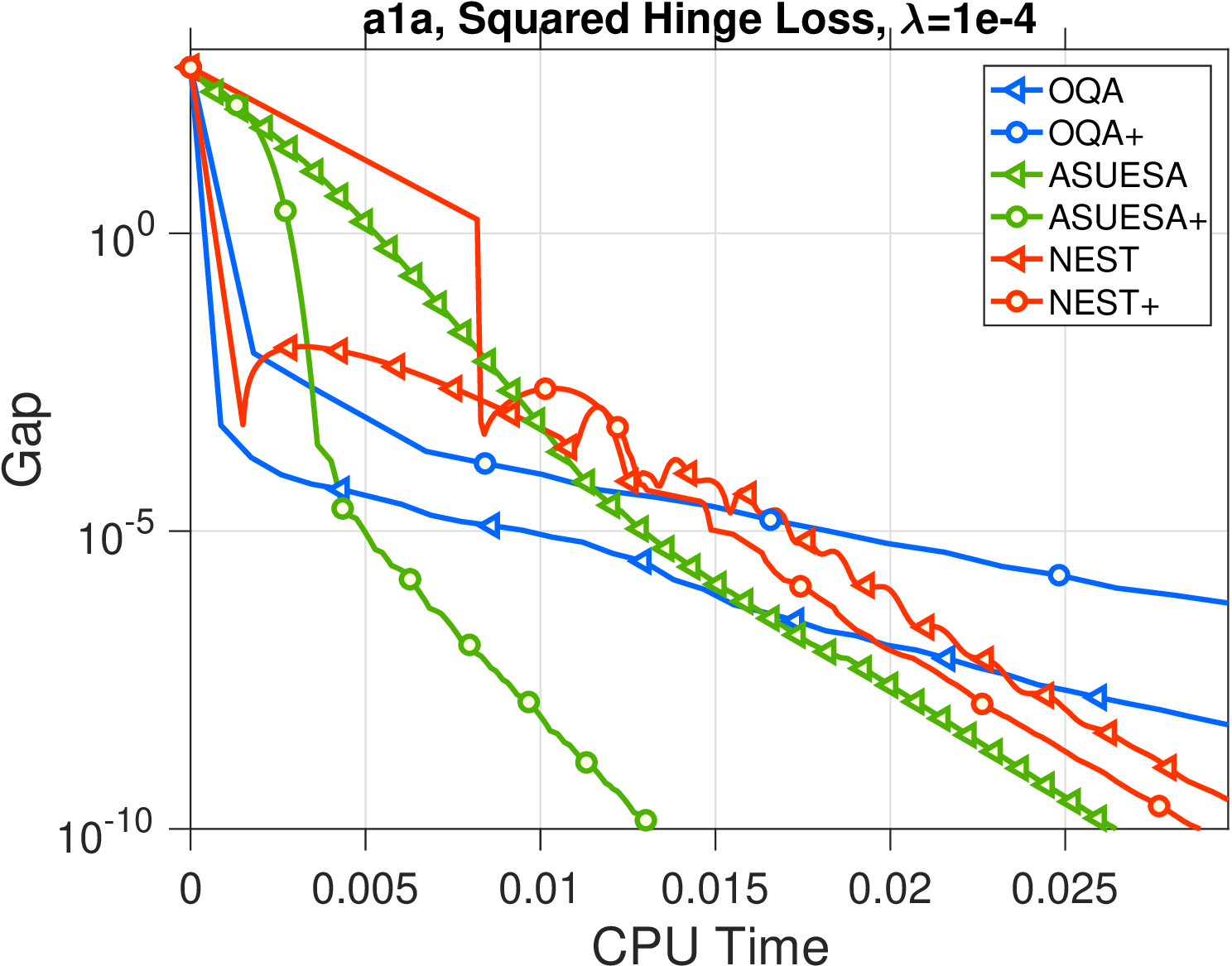}
	\includegraphics[scale=.15]{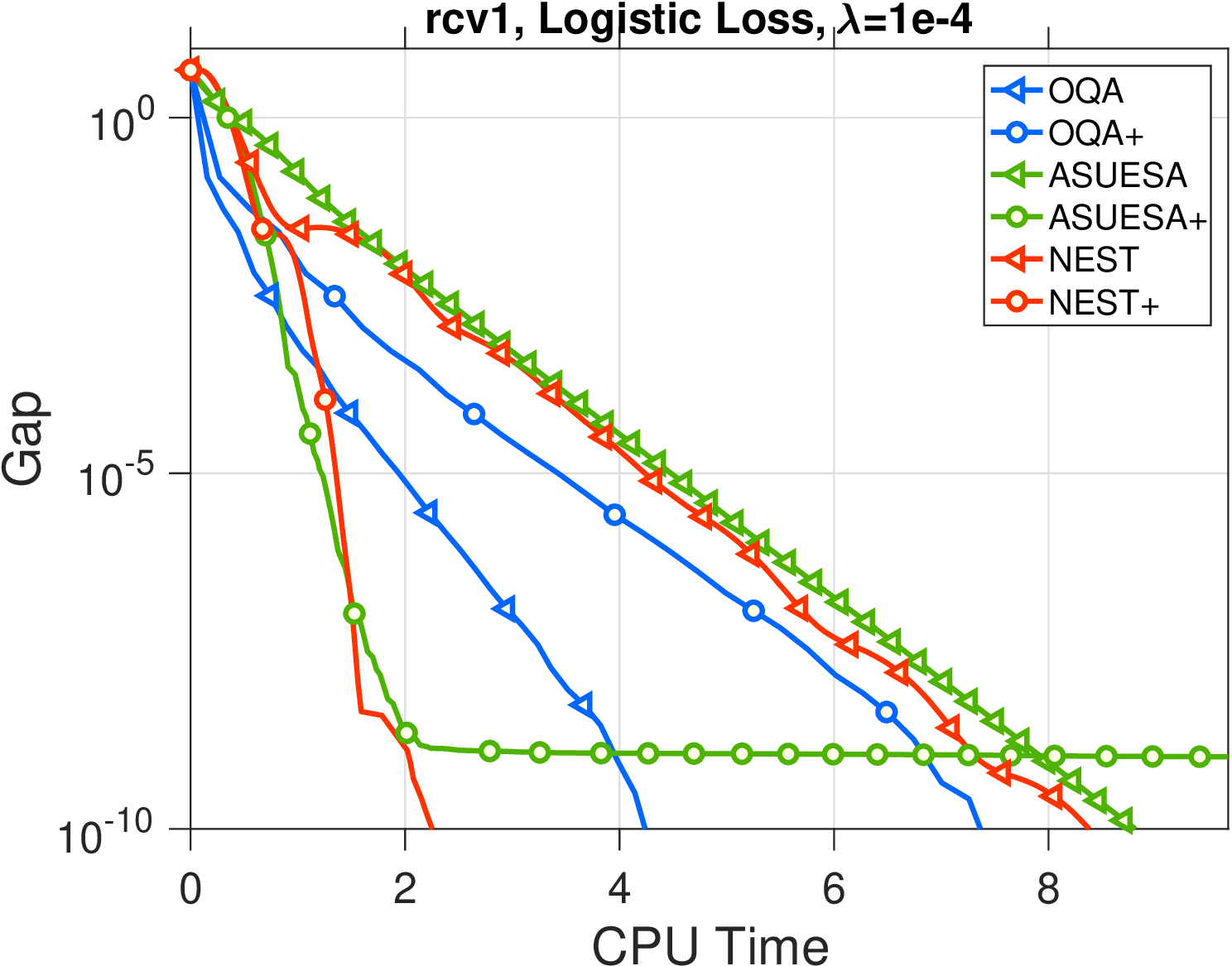}
	\includegraphics[scale=.15]{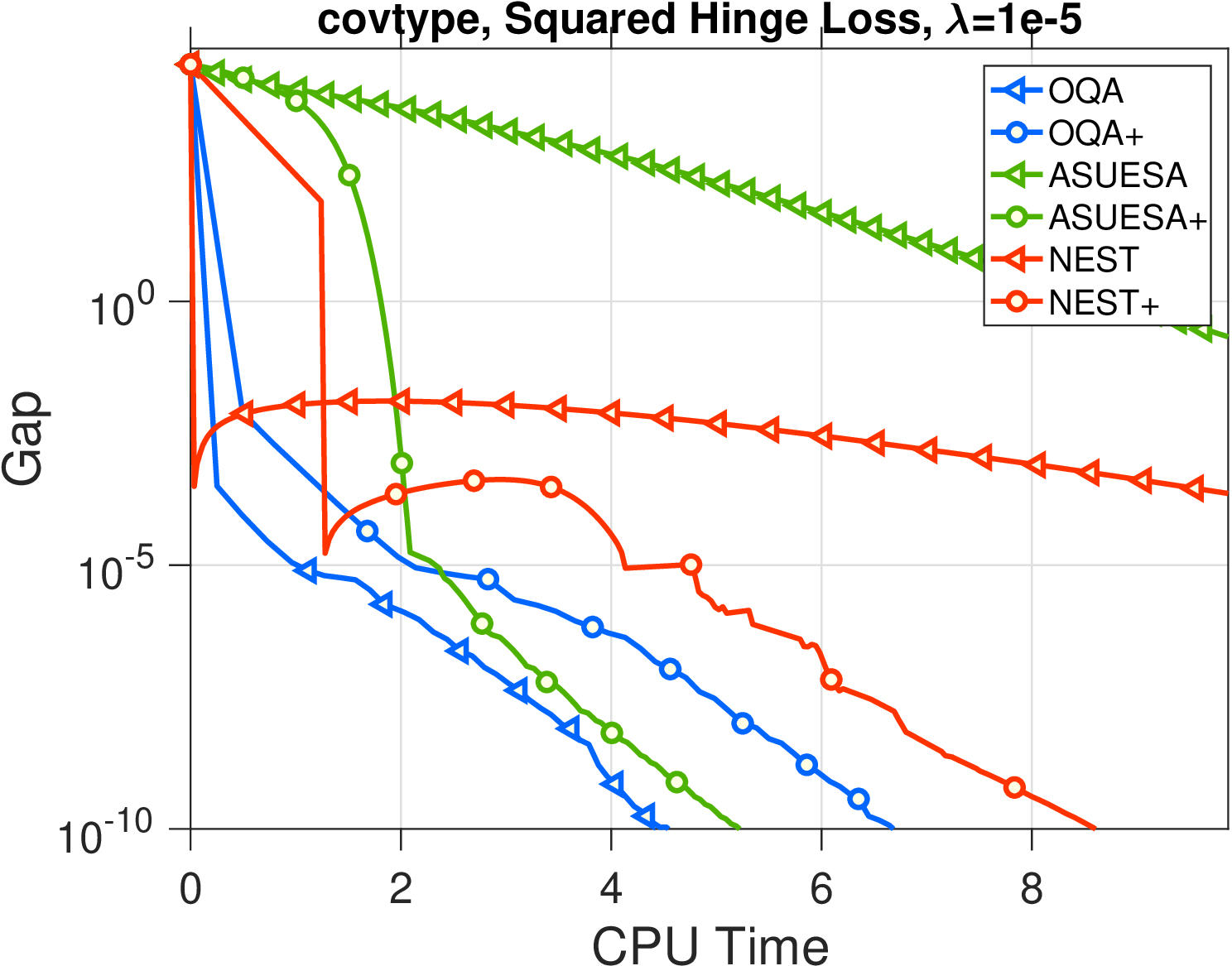}
	\caption{Evolution of the gap $f(x_k)-\phi_k^*$ for each algorithm compared with the number of function evaluations and cputime.}
	\label{fig:exp11}
\end{figure*}
In Figure~\ref{fig:exp11} we plot the gap $f(x_k)-\phi_k^*$ vs the number of function evaluations and the gap $f(x_k)-\phi_k^*$ vs the cpu time. The figure shows the advantages of using an adaptive Lipschitz constant with the adaptive methods performing better than their original versions in most cases. Figure~\ref{fig:exp11} also shows that ASUESA+ performs very well, being the best algorithm on the first dataset, and the second best algorithm on the other two datasets.
\textcolor{black}{The numerical experiments highlight that the gap, $f(x_k) - \phi_k^*$, goes to zero \emph{monotonically} for our proposed methods, which matches our theoretical results. It is crucial to note that this does \emph{not} necessarily mean that the objective function value decreases monotonically. Note also that the results confirm that Nesterov's method is non-monotone as well (which is also observed  in the study \cite{o2015adaptive}).}

\subsubsection*{Theory and Practice for OQA and ASUESA}
In this numerical experiment we study ASUESA and OQA and investigate how their practical performance compares with that predicted by theory. For the OQA algorithm a line search is needed to determine a necessary algorithmic variable, and to ensure that theory for OQA holds, the line search should be exact. In this experiment we will use bisection to compute this variable, but we will restrict the number of bisection steps allowed to $b=2,5,20$. Figure~\ref{fig:bisection} plots the ratio $(f(x_k) - \phi^*_{k})/(f(x_{k-1}) - \phi^*_{k-1})$ for ASUESA and for three instances of OQA, where each instance uses a different number of bisection steps $b=2,5,20$. We also plot $1-\sqrt{\tfrac{\mu}{L}}$ (black dots), which is the amount of decrease in the gap $f(x_k)-\phi_k^*$ at each iteration predicted by the theory. (In theory, we should have $(f(x_k) - \phi^*_{k})/(f(x_{k-1}) - \phi^*_{k-1}) \leq 1-\sqrt{\tfrac{\mu}{L}}$ for all $k\geq 0$.)


\begin{figure*}[h!]
	\centering
	\includegraphics[scale=.15]{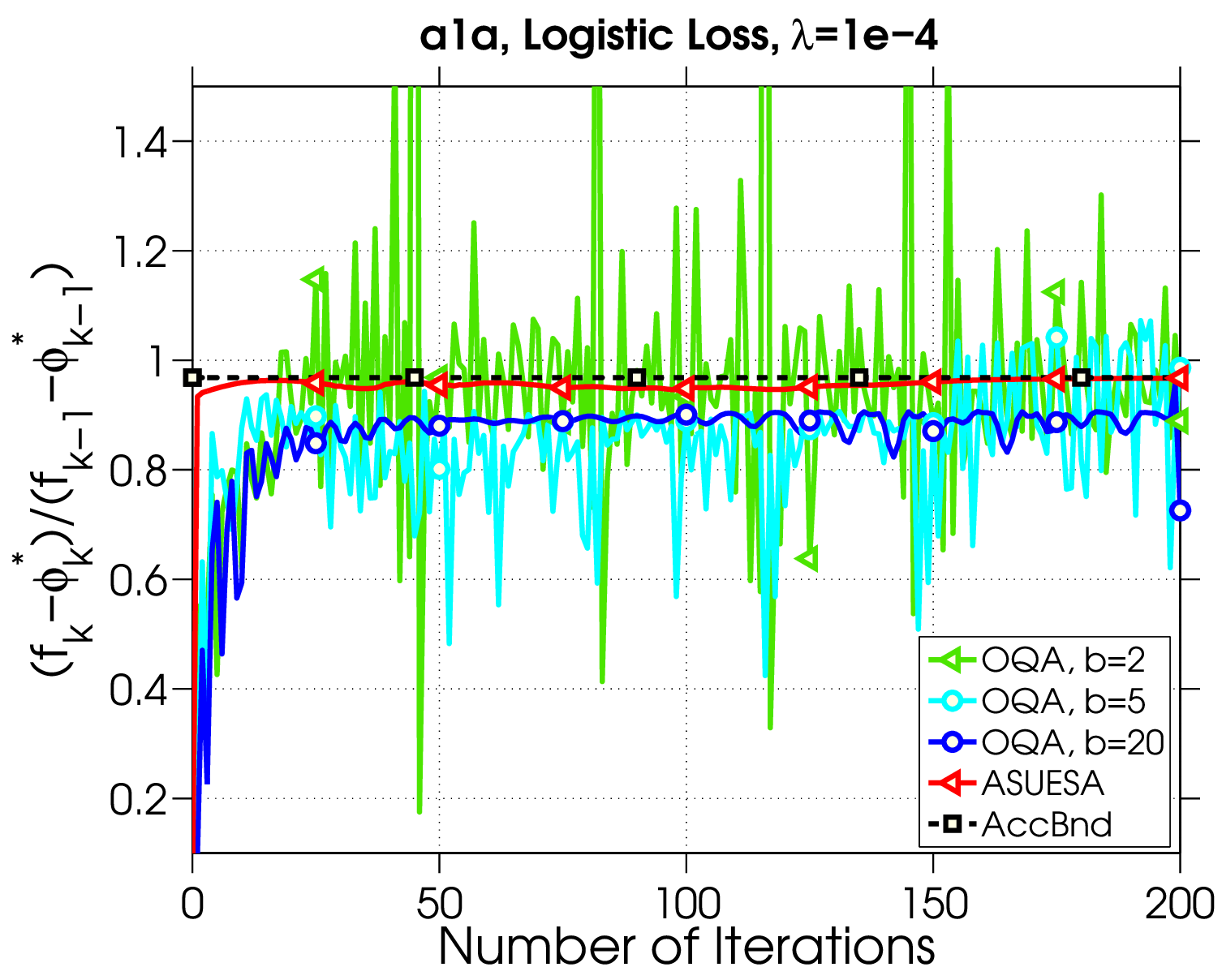}
	\includegraphics[scale=.15]{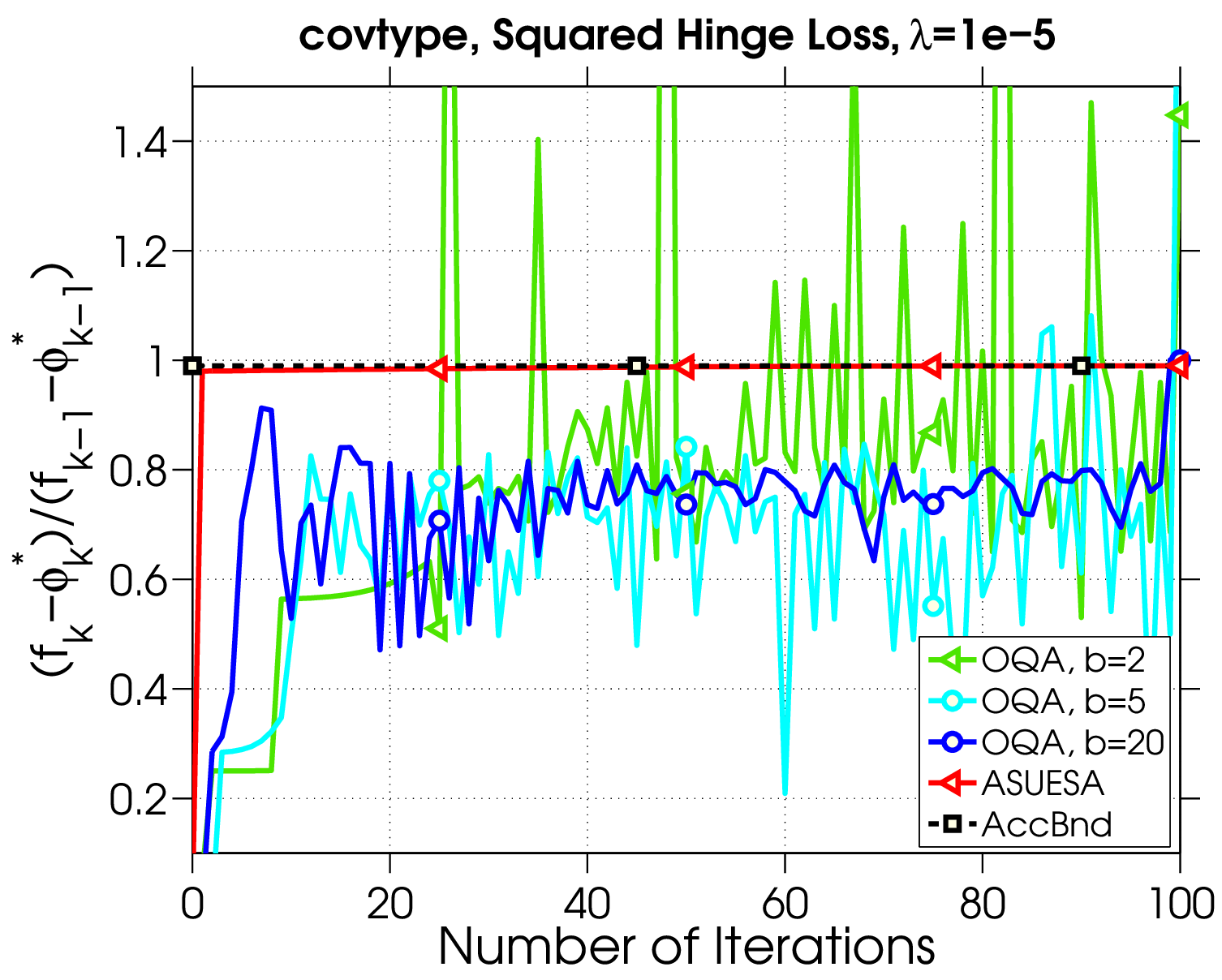}
	\caption{Comparison of  $\tfrac{f(x_k) - \phi^*_{k}}{f(x_{k-1}) - \phi^*_{k-1}}$ for ASUESA and for OQA with different numbers of bisection steps ($b=2,5,20$). The black dots are $1-\sqrt{\tfrac{\mu}{L}}$.}
	\label{fig:bisection}
\end{figure*}

From the plots in Figure~\ref{fig:bisection} we see that ASUESA performs very well, and as predicted by the theory, with the ratio $(f(x_k) - \phi^*_{k})/(f(x_{k-1}) - \phi^*_{k-1})$ always strictly below the theoretical bound. On the other hand, the quality of line search affects OQA significantly. The fewer the number of line search (bisection) iterations, the more likely it is for OQA to violate the theoretical results. Note that this is not necessarily surprising because the theory for OQA requires the exact minimizer along a line segment to be found, so 2 or 5 iterations of bisection may be simply too few to find it. Notice that when $b=2$, the green line shows that OQA behaves erratically, with the ratio $(f(x_k) - \phi^*_{k})/(f(x_{k-1}) - \phi^*_{k-1})$ being greater than 1 on many iterations, indicating that the gap is growing on those iterations. When we use OQA with $b=5$ steps of bisection at each iteration (light blue line), the algorithm performs better, and often, but not always, the ratio is less than 1. Finally, the dark blue line shows the behaviour of OQA when $b=20$ steps of bisection at each iteration. The dark blue line is always below the theoretical bound of $1-\sqrt{\tfrac{\mu}{L}}$, indicating good algorithmic performance (often better than predicted by theory). However, the line search needed by OQA comes at an additional computational cost, which can still mean that the overall runtime is longer for OQA than for ASUESA, as we now show.

Here a similar experiment is performed to compare the theoretical and practical performance of SUESA and ASUESA. We have already seen that the theoretical results for ASUESA give a proportional reduction of $1-\sqrt{\tfrac{\mu}{L}}$ in the gap at every iteration. However, for SUESA, the proportional reduction in the gap is $1-\tfrac{\mu}{L}$. We investigate how these theoretical bounds compare with the practical performance of each of these algorithms. We use the \texttt{ala}, \texttt{rcv1} and \texttt{covtype} datasets for this experiment, and for each of the three datasets we form both a logistic loss, and a squared hinge loss to create 6 problem instances. The results are shown in Figure~\ref{fig:thvsprac}.
\begin{figure*}[h!]\centering
	\includegraphics[scale=.15]{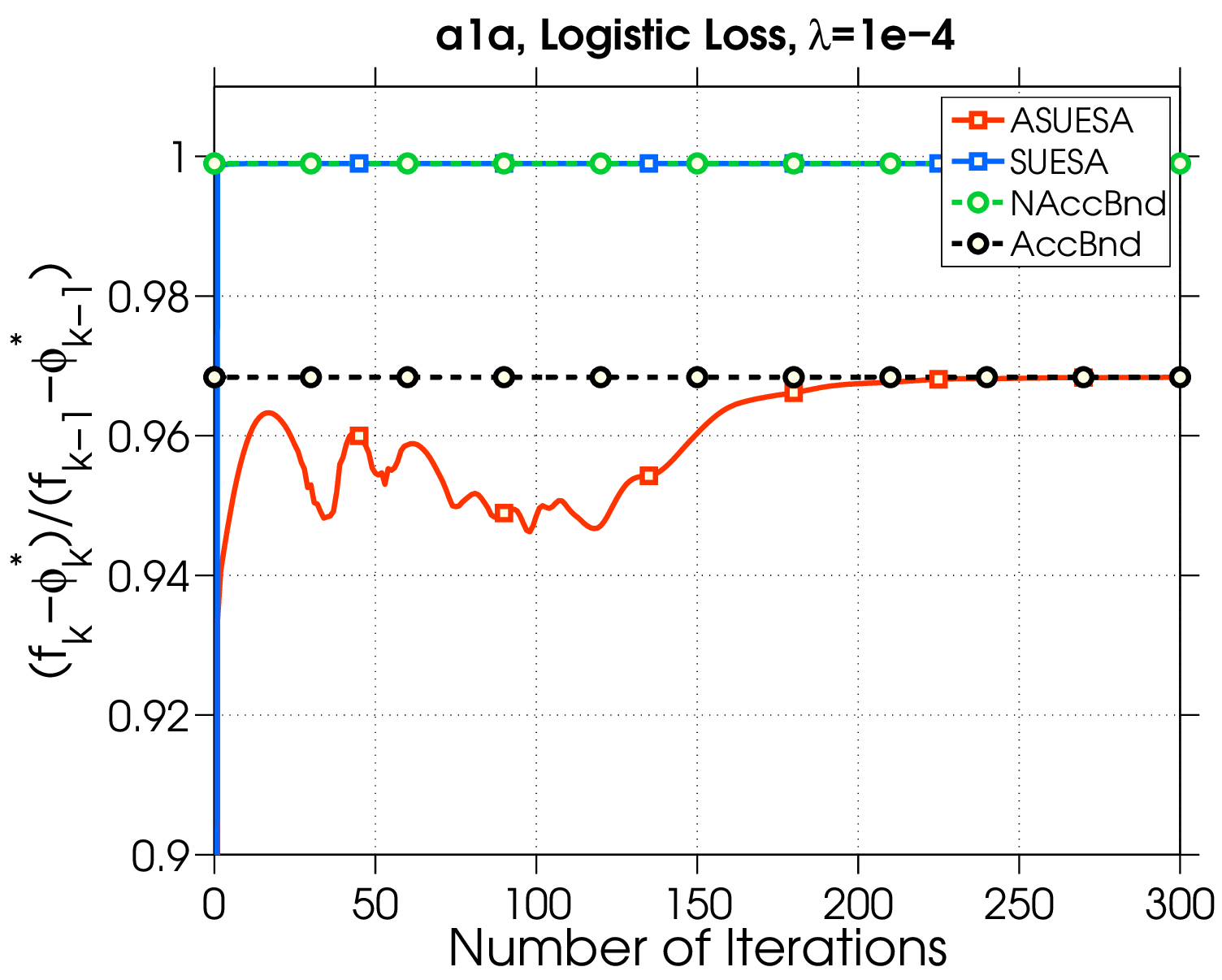}
	\includegraphics[scale=.15]{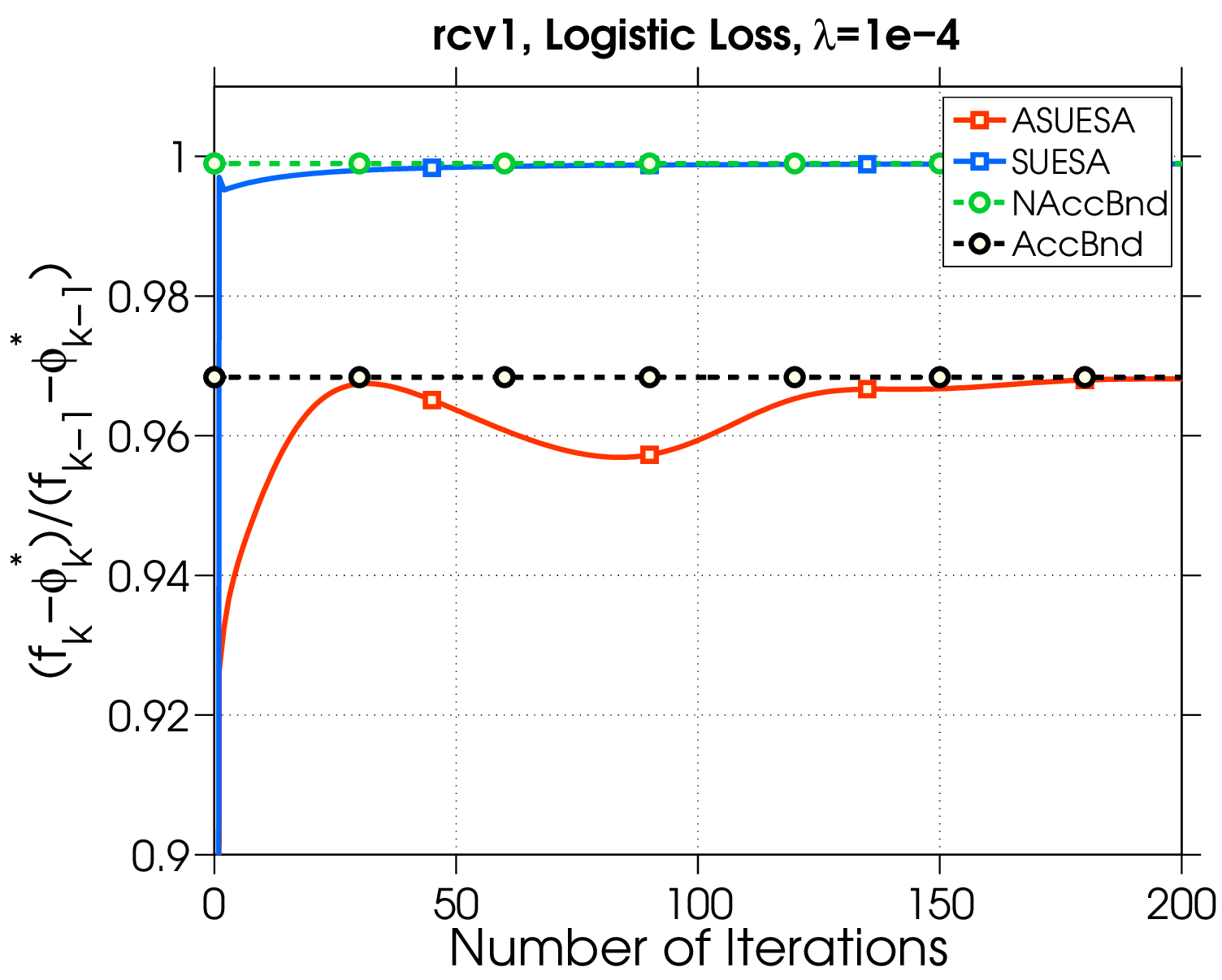}
    \includegraphics[scale=.15]{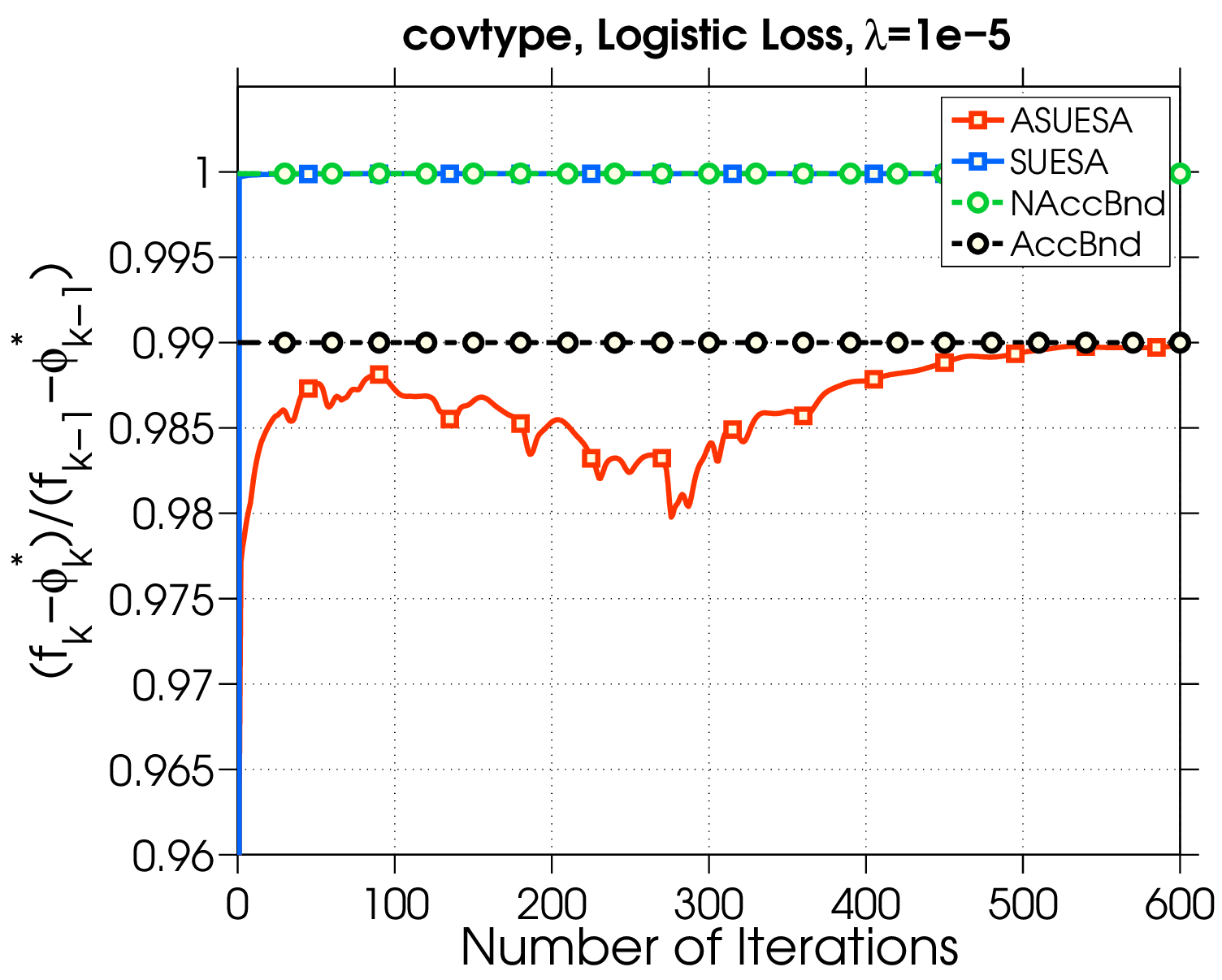}
    \includegraphics[scale=.15]{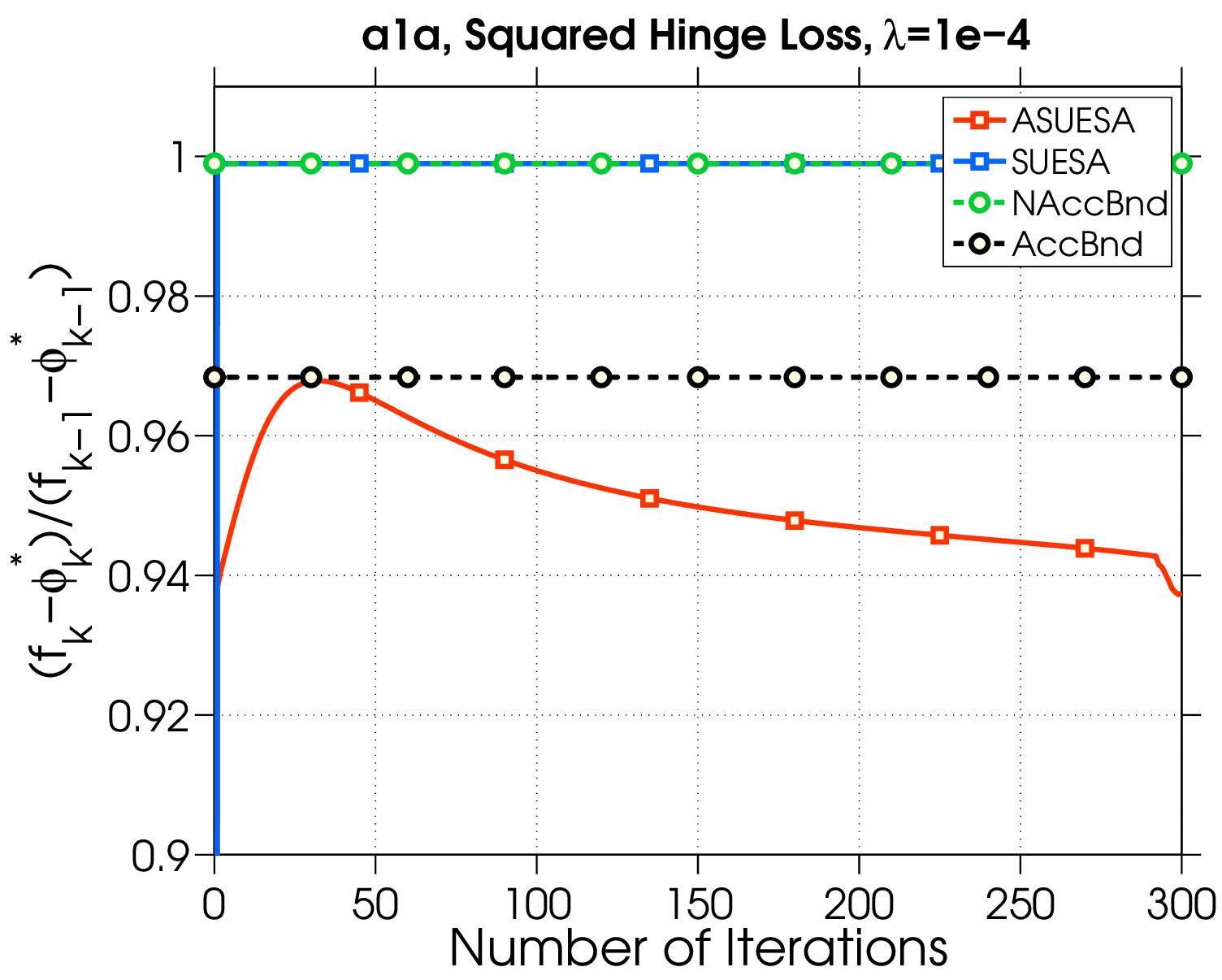}
	\includegraphics[scale=.15]{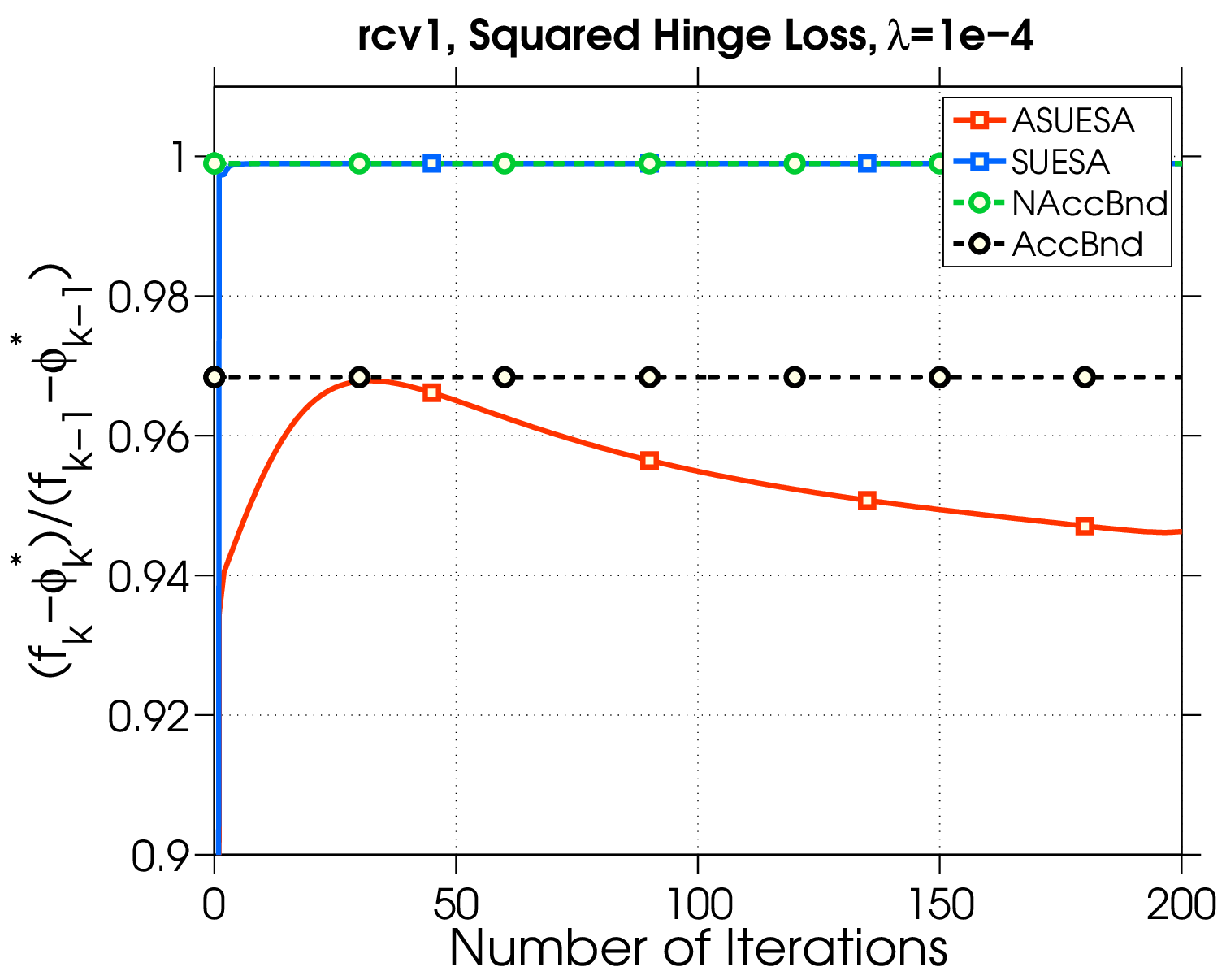}
	\includegraphics[scale=.15]{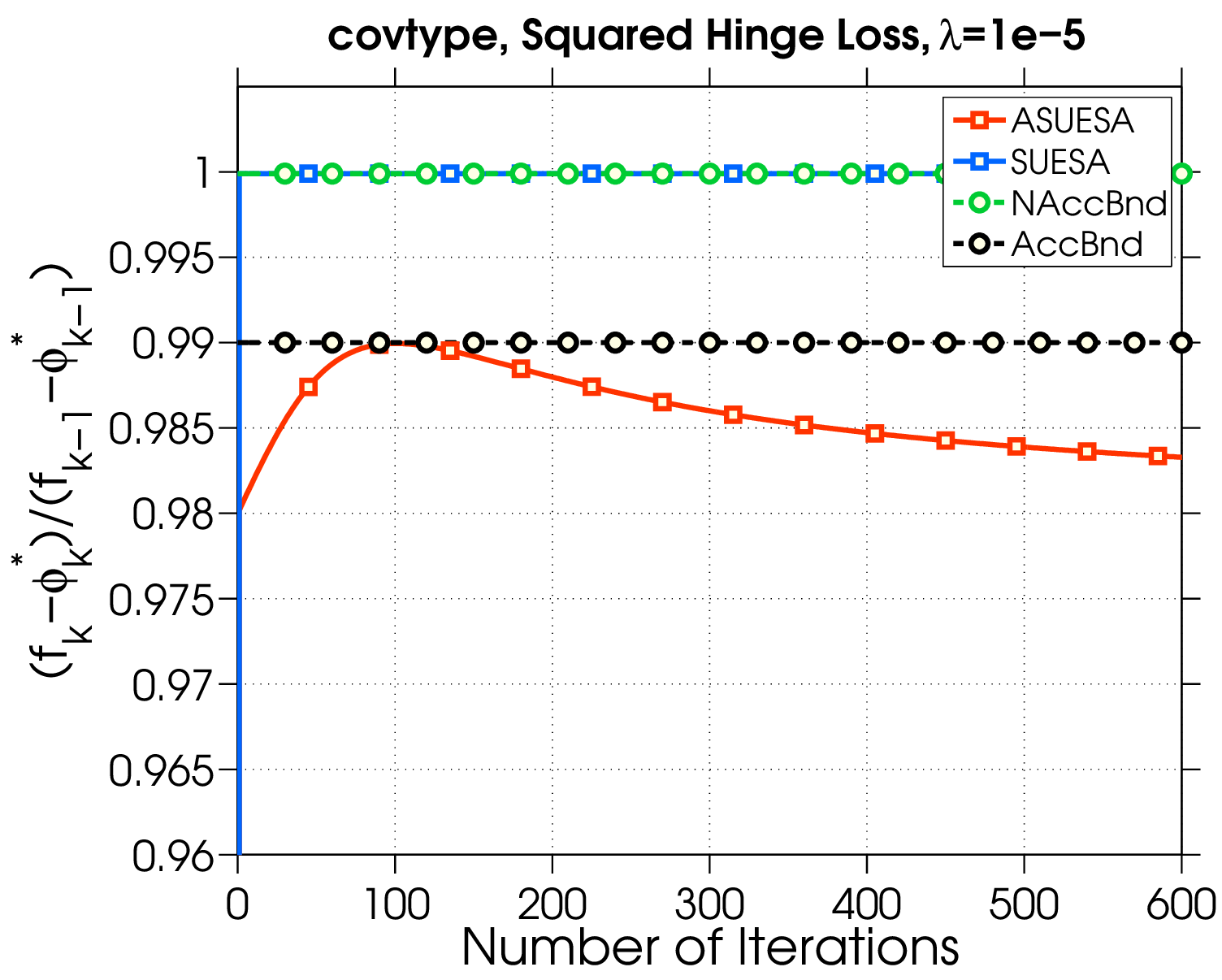}
	\caption{Comparison of $\tfrac{f(x_k) - \phi^*_{k}}{f(x_{k-1}) - \phi^*_{k-1}}$ for SUESA and ASUESA and $1 - \tfrac{\mu}{L}$(green line) and $1-\sqrt{\tfrac{\mu}{L}}$ (black line). }
	\label{fig:thvsprac}
\end{figure*}
Figure~\ref{fig:thvsprac} presents the ratio $\tfrac{f(x_k) - \phi^*_{k}}{f(x_{k-1}) - \phi^*_{k-1}}$ for SUESA and ASUESA. Also displayed is the theoretical (unaccelerated) rate $1 - \tfrac{\mu}{L}$ (the green line) and the theoretical (accelerated) rate $1-\sqrt{\tfrac{\mu}{L}}$ (the black line). One sees that the practical performance of SUESA is very similar to that predicted by the theory because the blue line matches the green line closely. Another observation is that for the accelerated algorithm (ASUESA), in practice, the reduction in the gap $f(x_k) - \phi^*_{k}$ is often more optimistic than the theoretical rate.

\subsection{Experiments on composite functions}

In this section we perform several numerical experiments on problems with a composite objective. Specifically, we consider the elastic net problem, which is problem \eqref{eq:Problem} with
\begin{equation}\label{eq:squaHingeLosscomp}
F(x) =  \frac{1}{n} \sum_{i=1}^{n} (a_i^Tx -y_i)^2 +  \tfrac{\lambda_1}{2}\tnorm{x} + \tfrac{\lambda_2}{2}\|x\|_1.
\end{equation}
Notice that the first two terms in \eqref{eq:squaHingeLosscomp} are smooth, while the $\ell_1$-norm term makes \eqref{eq:squaHingeLosscomp} nonsmooth overall.
We compare our Algorithm \ref{alg:CUESA} and \ref{alg:ACUESA} (CUESA and ACUESA) with the one proposed in \cite{Nesterov07} (CNEST). As stated previously, each of these algorithms can be implemented with either a fixed $L$ or an adaptive $L$, and we will compare each algorithm under both of these two options.

For these experiments we again use the 3 datasets \texttt{ala}, \texttt{rcv1} and \texttt{covtype}. For the \texttt{ala} data the regularization parameters were set to $\lambda_1= \lambda_2 =  10^{-4}$, for the \texttt{rcv1} data the regularization parameters were set to $\lambda_1 = 10^{-4}$ and $ \lambda_2 = 10^{-5}$, and for the \texttt{covtype} data the regularization parameters were set to $\lambda_1 = 10^{-4}$ and $ \lambda_2 = 10^{-6}$.
\begin{figure}[htbp]
	\centering
	\includegraphics[scale=.15]{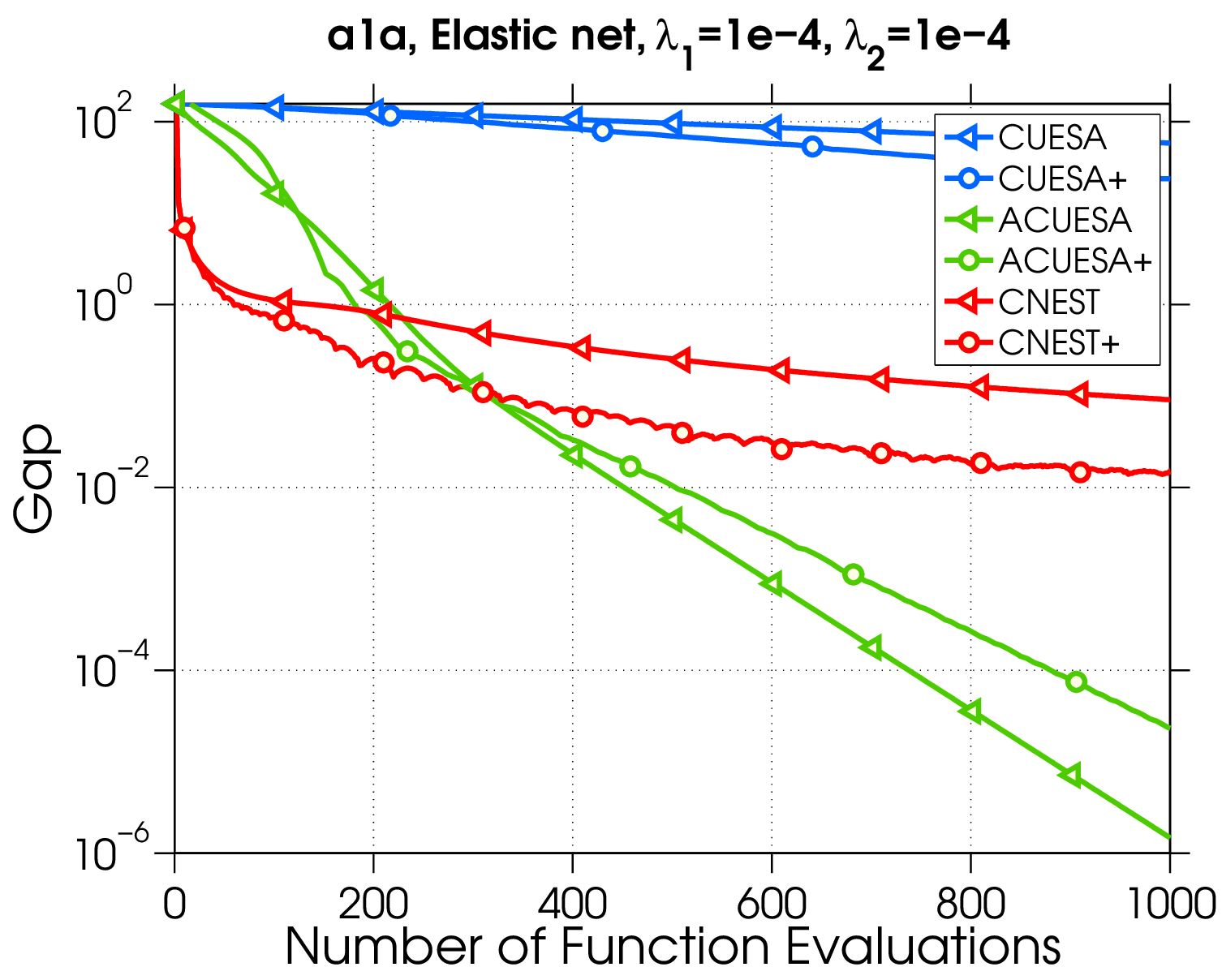}
    \includegraphics[scale=.15]{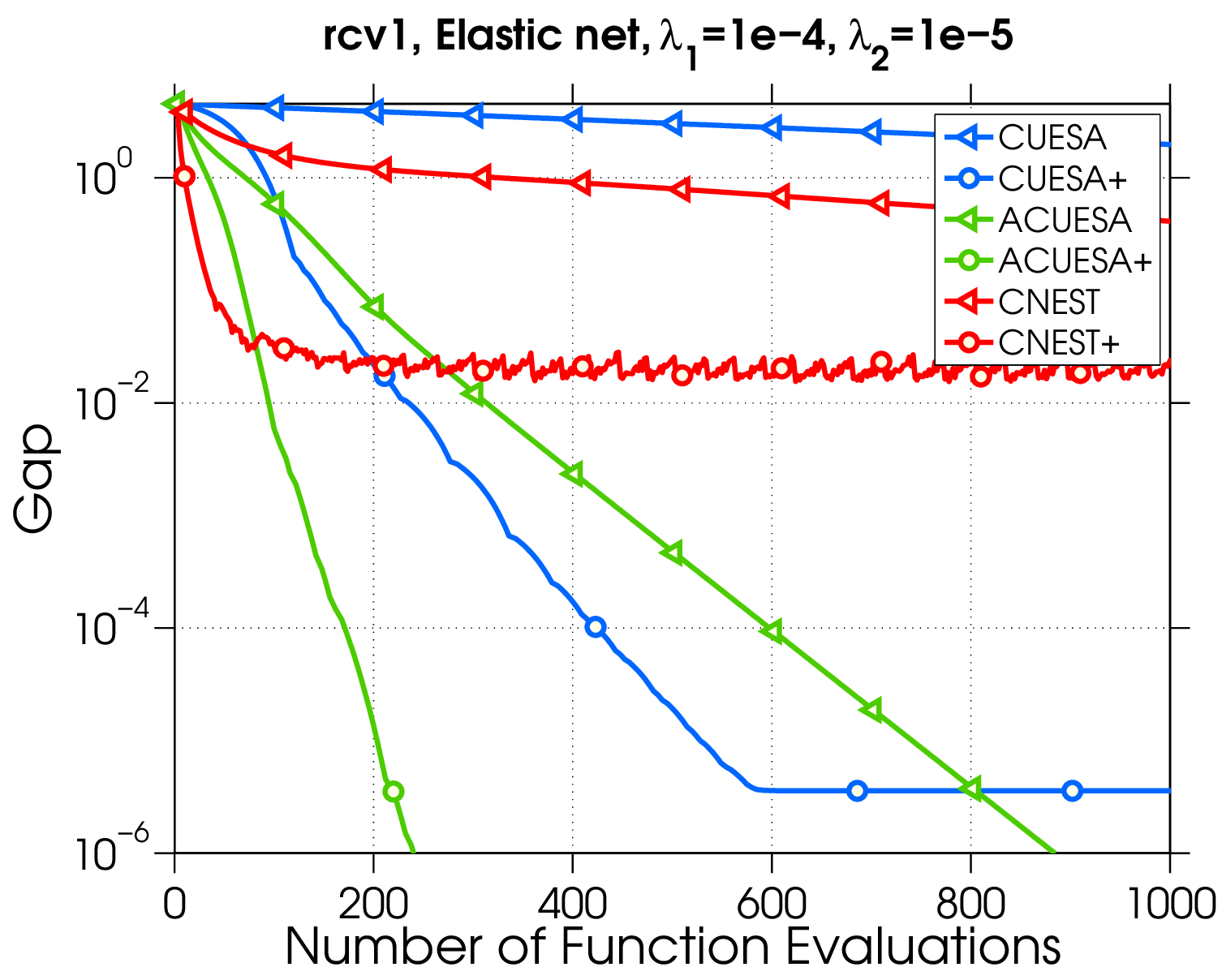}
    \includegraphics[scale=.15]{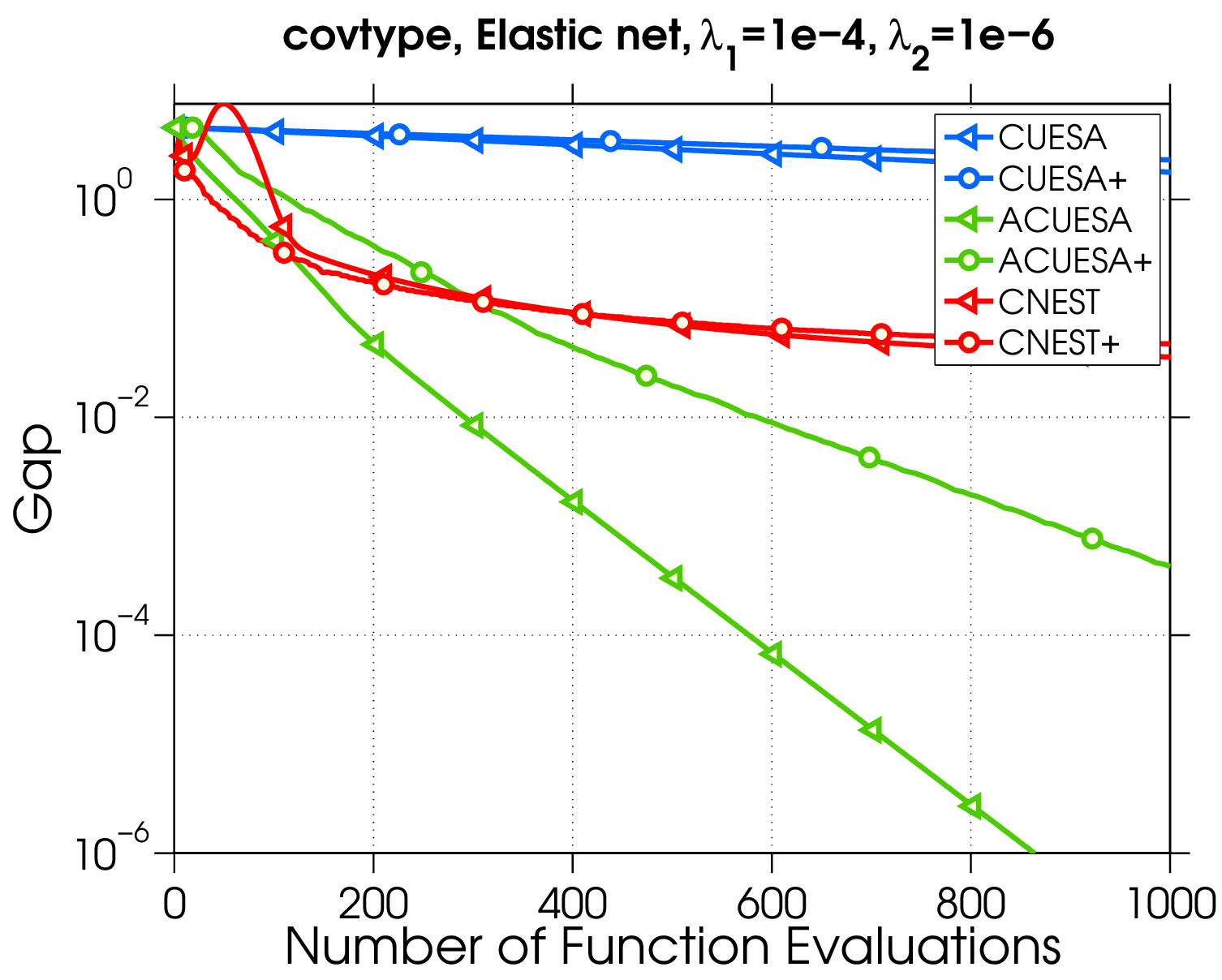}
	\includegraphics[scale=.15]{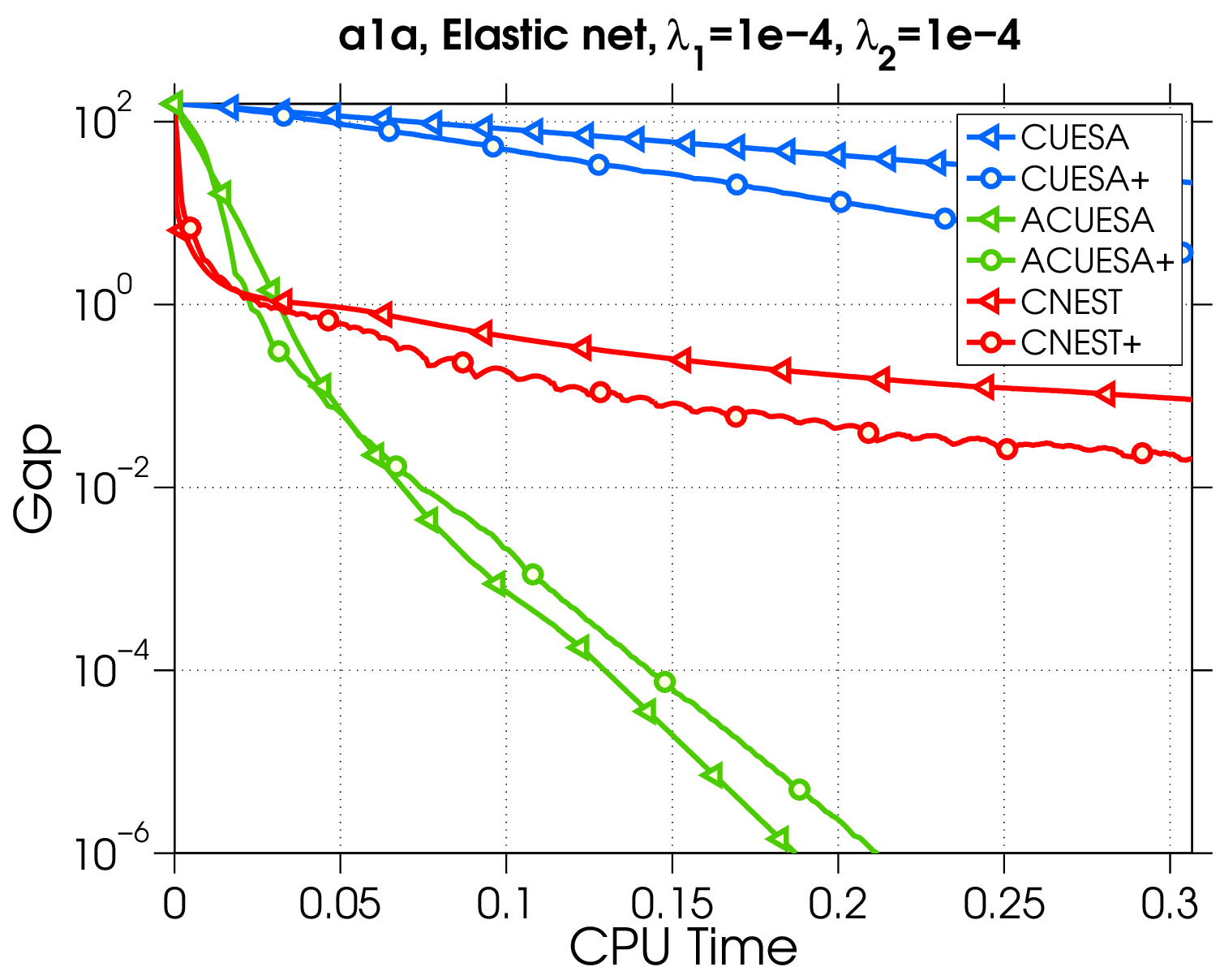}
	\includegraphics[scale=.15]{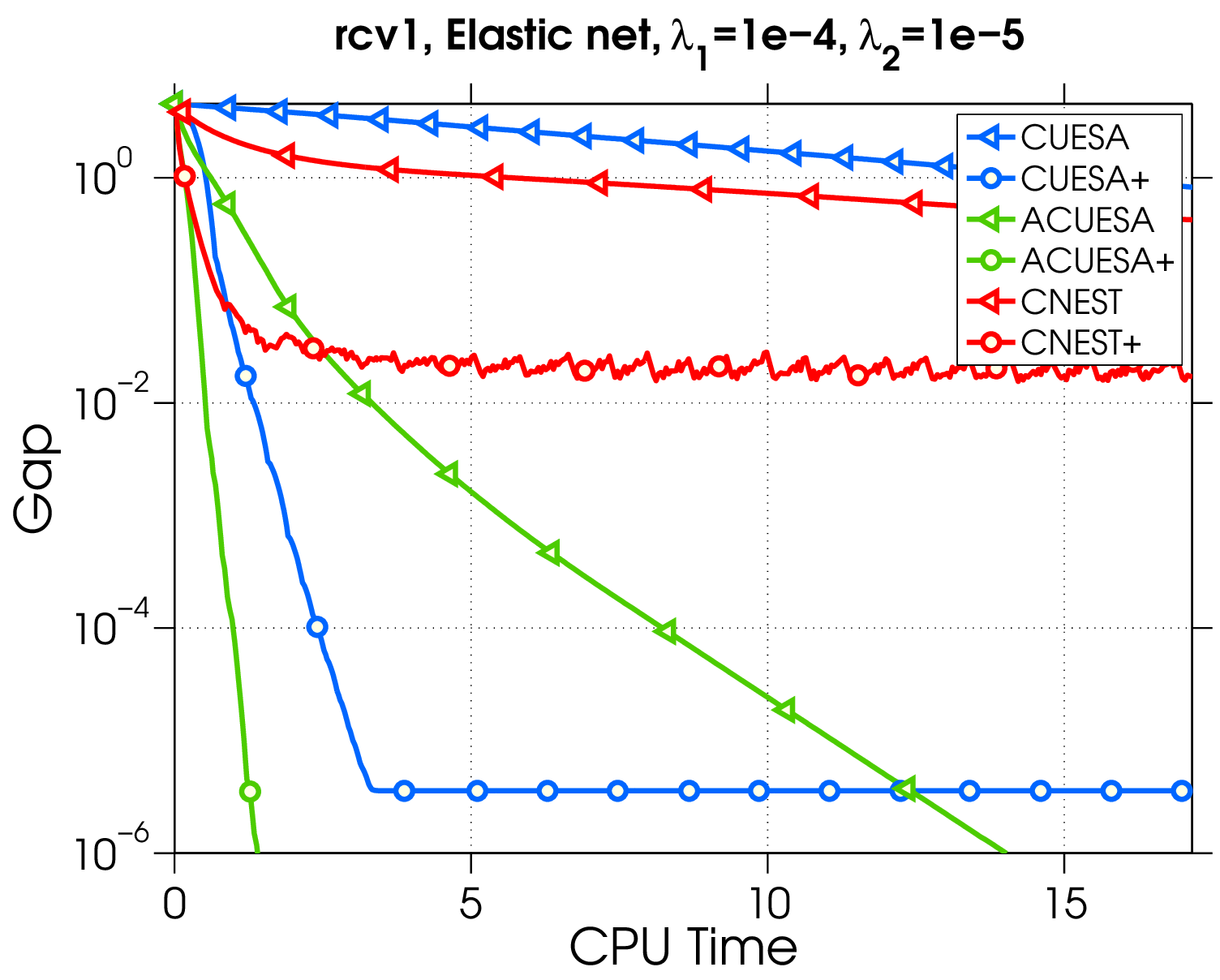}	
	\includegraphics[scale=.15]{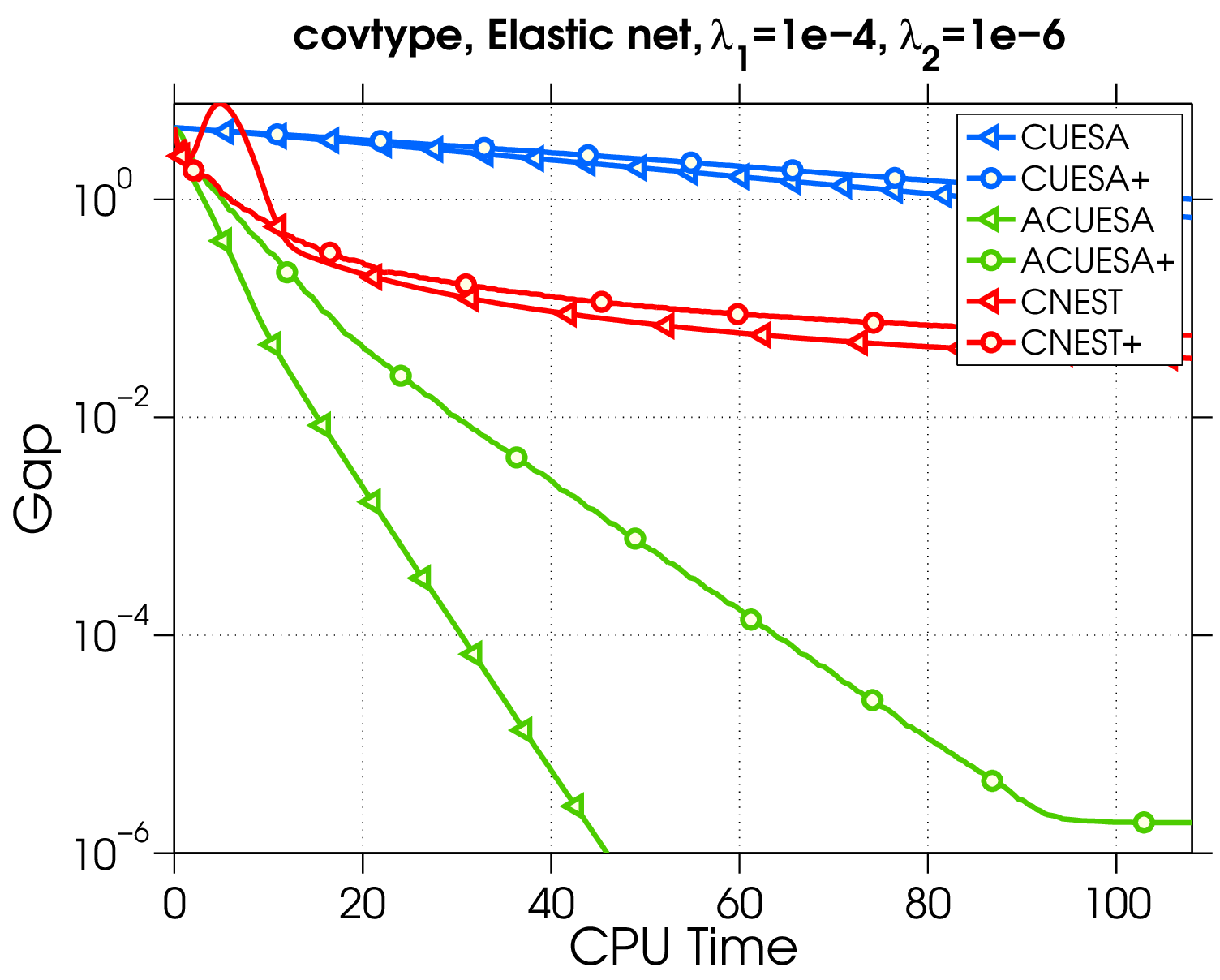}	
 	\caption{Comparison of how gaps between the objective values and the minimum amount of the lower bounds decreases for different algorithms. We observe the advantage of ACUESA+ in both number of function evaluations and running time.}
	\label{fig:comp}
\end{figure}
The results of this experiment are presented in Figure~\ref{fig:comp}, and they show the clear practical advantage of the ACUESA algorithm. The ACUESA algorithm outperforms the CNEST algorithm in all problem instances. Interestingly, on the \texttt{rcv1} dataset, the CUESA+ algorithm (CUESA with an adaptive Lipschitz constant) performs better than the accelerated ACUESA algorithm, although the ACUESA+ (accelerated plus adaptive Lipschitz constant) algorithm is still the best overall.

In the final numerical experiment presented here, we investigate the theoretical vs practical performance of CUESA and ACUESA. We set up three problems using each of the 3 datasets already described, and the results are presented in Figure~\ref{fig:comptheoryvsprac}.
\begin{figure}[H]
	\centering
	\includegraphics[scale=.15]{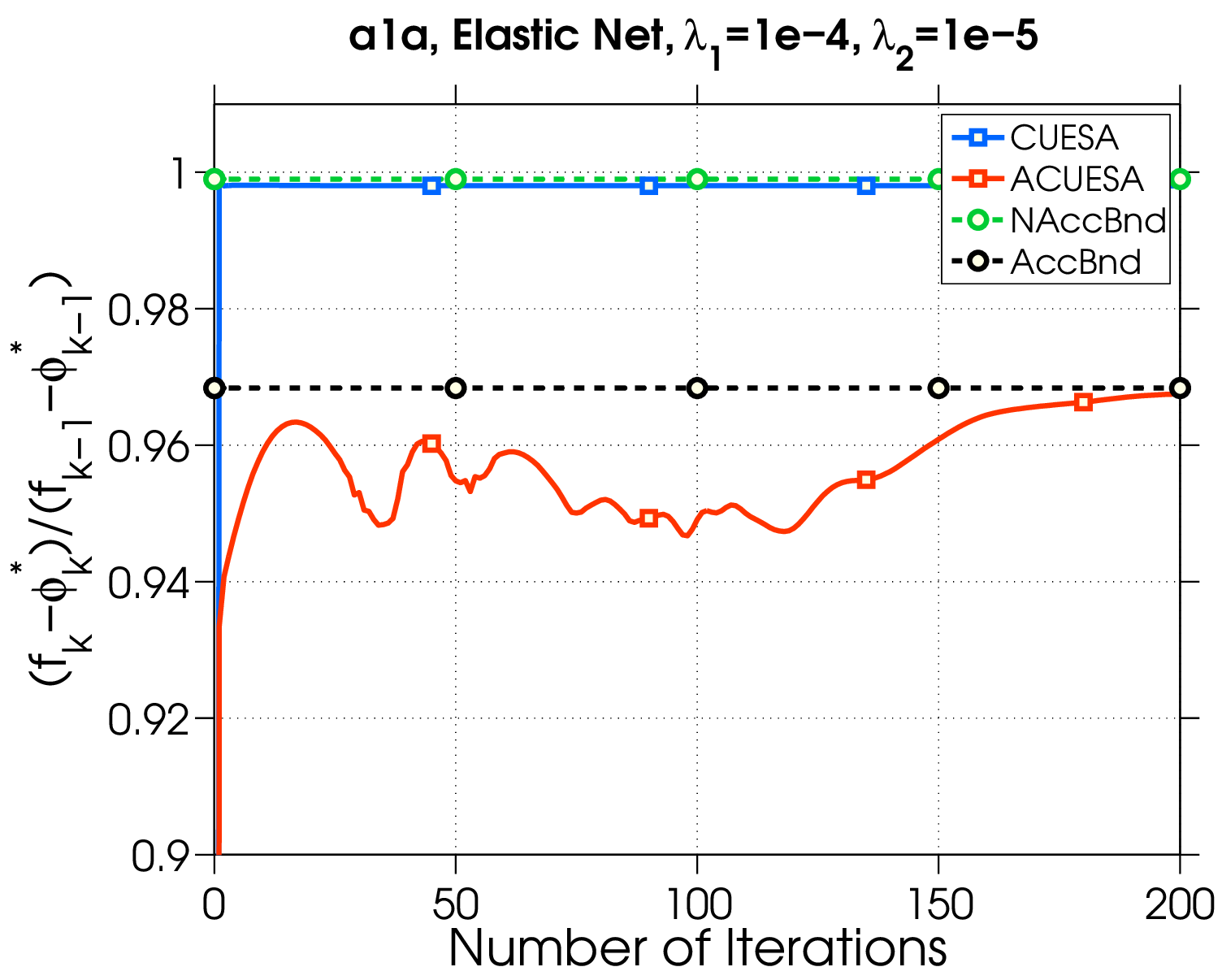}
	\includegraphics[scale=.15]{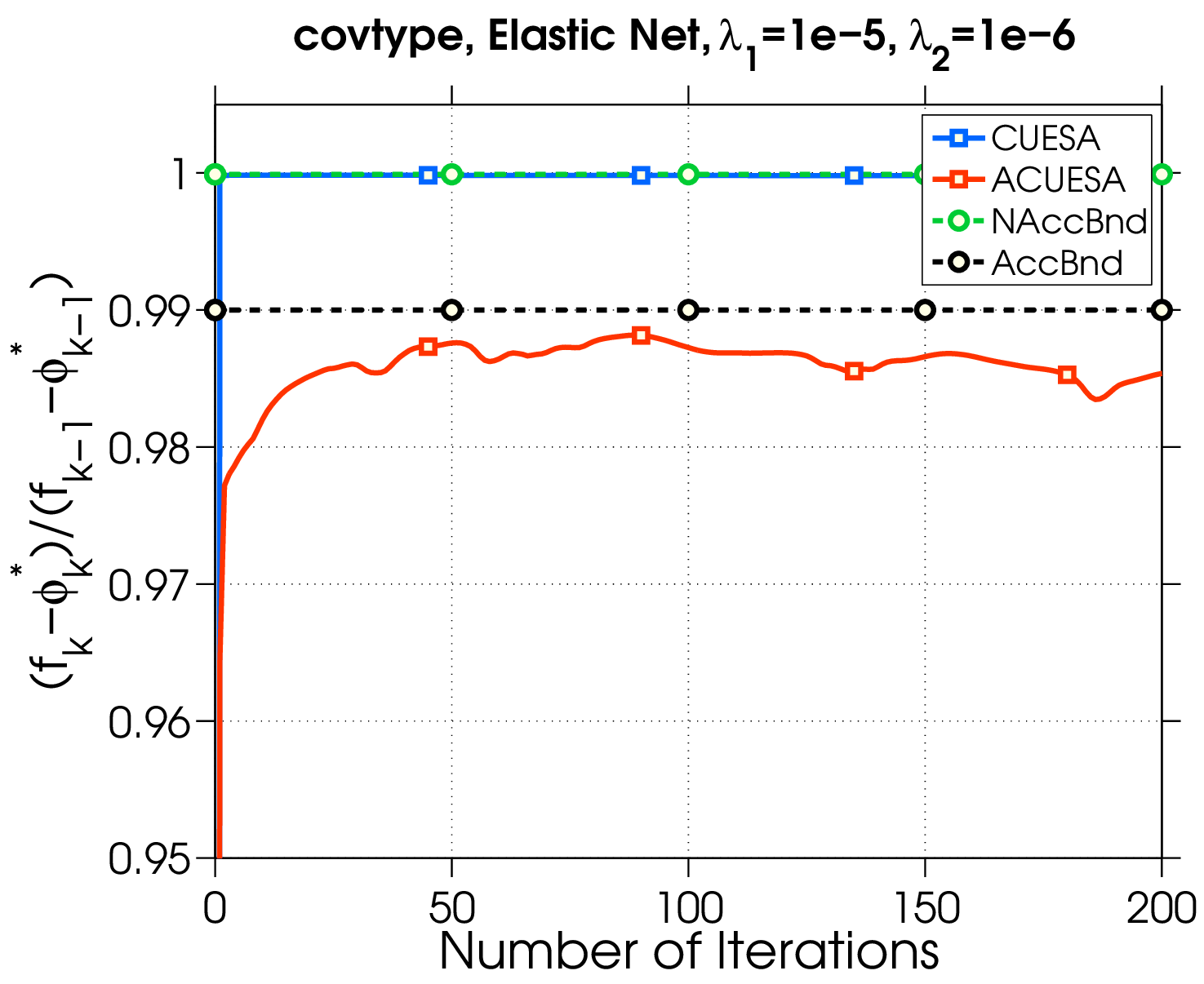}
	\includegraphics[scale=.15]{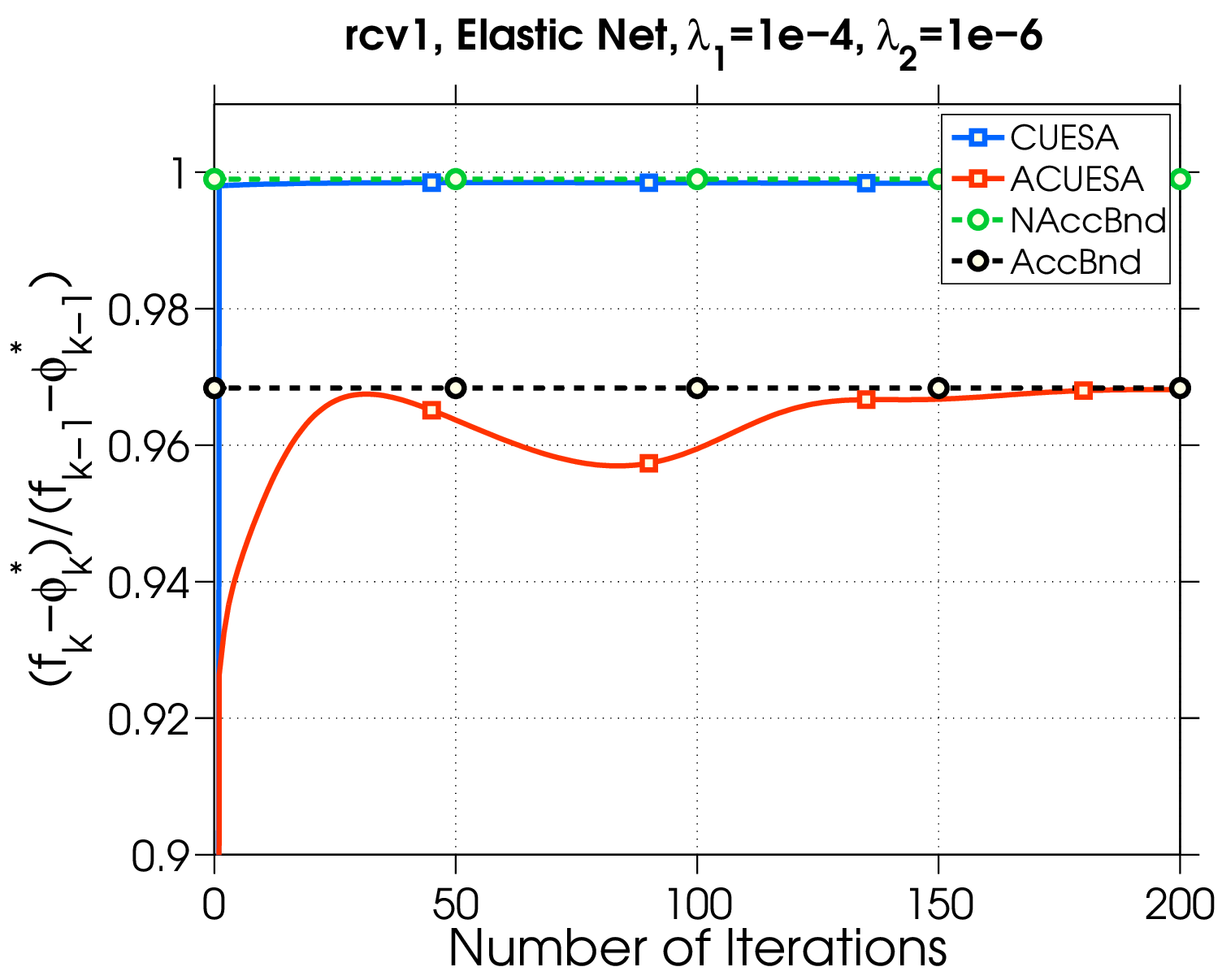}
	\caption{Comparison of  $\tfrac{f(x_k) - \phi^*_{k}}{f(x_{k-1}) - \phi^*_{k-1}}$ for CUESA and ACUESA and $1 - \tfrac{\mu}{L}$(green line) and $1-\sqrt{\tfrac{\mu}{L}}$ (black line). Here we have a similar observation as in Figure \ref{fig:comp}.}
	\label{fig:comptheoryvsprac}
\end{figure}
As before, the green line represents the theoretical (unaccelerated) rate $1 - \tfrac{\mu}{L}$ and the black line represents the theoretical (accelerated) rate $1 - \sqrt{\tfrac{\mu}{L}}$. Note that the practical performance of CUESA closely matches the theoretical rate. We also observe that the practical performance of ACUESA is always at least as good as the theoretical rate, and can often get better decrease in the gap per iteration than $1 - \sqrt{\tfrac{\mu}{L}}$.

All the numerical results presented in this section strongly support the practical success of the SUESA, ASUESA, CUESA and ACUESA algorithms.

\section{Conclusion}\label{sec:conclusion}

In this paper we studied efficient algorithms for solving the strongly convex composite problem \eqref{eq:Problem}. Four new algorithms were proposed --- CUESA, ACUESA, SUESA and ASUESA --- to solve \eqref{eq:Problem} in both the composite and smooth cases. Each algorithm maintains a global lower bound on the objective function value, which can be used as an algorithm stopping condition to ensure and $\epsilon-$optimal solution. Moreover, the definition of a new underestimate sequence that incorporates three sequences, one of which is a global lower bound on the objective function, and this framework was used to establish convergence guarantees for the algorithms proposed here. Our algorithms have a linear rate of convergence, and the two accelerated variants (ACUESA and ASUESA) converge at the optimal linear rate. We also presented a strategy to adaptively select a local Lipschitz constant for the situation when one does not wish to, or cannot, compute the true Lipschitz constant. Numerical experiments show that our algorithms are computationally competitive when compared with other state-of-the-art methods including Nesterov's accelerated gradient methods and optimal quadratic averaging methods.

\bibliographystyle{plain}
\bibliography{citations}

\appendix
\section{Comparison of a UES and Nesterov's ES}\label{sec:appendix}

Here we briefly compare the definition of an underestimate sequence with Nesterov's definition of an estimate sequence. The original definition of an ES only applied to smooth functions that satisfy Assumption~\ref{A_SCL} so we restrict our discussion to this case. Moreover, we will assume that the same sequence $\{\alpha_k\}_{k=0}^{\infty}$ is used when discussing a UES and an ES. Consider the following definition.
\begin{definition}[Definition 2.2.1 in \cite{Nesterov04}]\label{def:NesterovES}
  A pair of sequences $\{\phi_k^N(x)\}_{k=0}^{\infty}$ and $\{\lambda_k^N\}_{k=0}^{\infty}$ $\lambda_k^N \geq 0$ is called an estimate sequence of function $f(x)$ if
  \begin{itemize}
    \item[(i)] $\lambda_k^N \to 0$; and
    \item[(ii)] for any $x \in \R^n$ and all $k\geq 0$ we have $\phi_k^N(x) \leq (1-\lambda_k^N)f(x) +\lambda_k^N \phi_0^N(x).$
  \end{itemize}
\end{definition}
The definition is general and does not say anything about convergence. With this in mind, Nesterov's ES is coupled with the following lemma.
\begin{lemma}[Lemma 2.2.1 in \cite{Nesterov04}]\label{lem:NestConvegence}
   If for some sequence $\{x_k\}$ we have
   \begin{equation}\label{eq:NestUB}
     f(x_k) \leq (\phi_k^N)^* \eqdef \min_{x\in\R^n} \phi_k^N(x),
   \end{equation}
   then $f(x_k) - f^* \leq \lambda_k^N (\phi_0^N(x^*) - f^*)\to 0$.
\end{lemma}

The first observation to make is the clear difference between $f$ and $\phi_k(x)$ for a UES and an ES. In particular, for a UES, at \emph{every iteration} it holds that $\phi_k(x)\leq F(x)$ \eqref{eq:def1} so every $\phi_k(x)$ ($k\geq0$) is a global underestimate (global lower bound) of the objective function \emph{for all} $x\in \R^n$. However, for an ES, the function $\phi_k^N(x)$ is not necessarily a global upper bound for $f$ and it is necessarily not a global lower bound for $f$. What must hold is that, at iteration $k$, the minimizer of the approximation function $\phi_k^N(x)$ must be at least as large as $f(x_k)$ (the value of the objective function at the current point).

A second major differences that one observes is as follows. If an algorithm generates a series of sequences that form a UES (satisfying Definition~\ref{def:UES}), then (assuming $\sum_{k=0}^{\infty} \alpha_k = \infty$), the algorithm is guaranteed to converge (see Proposition~\ref{prop:UES}). On the other hand, if an algorithm generates a series of sequences that form an ES (satisfying Definition~\ref{def:NesterovES}), there is no such algorithm convergence guarantee. This statement is made concrete by considering the next lemma and the text that follows it.

\begin{lemma}[Lemma 2.2.2 in \cite{Nesterov04}]\label{lem:N_ES}
  Assume that (1) $f$ satisfies Assumption~\ref{A_SCL}, (2) $\phi_0^N(x)$ is an arbitrary function on $\R^n$, (3) $\{y_k\}_{k=0}^{\infty}$ is an arbitrary sequence in $\R^n$, (4) $\{\alpha_k\}_{k=0}^{\infty}$ with $\alpha_k \in (0,1)$ and $\sum_{k=0}^{\infty} \alpha_k = \infty$ and (5) $\lambda_0^N = 1$. Then the pair of sequences $\{\phi_k^N(x)\}_{k=0}^{\infty}$ and $\{\lambda_k^N\}_{k=0}^{\infty}$ recursively defined by
  \begin{eqnarray}
    \lambda_{k+1}^N &=& (1-\alpha_k)\lambda_k^N,\label{eq:Nlambda}\\
    \phi_{k+1}^N(x) &=& (1-\alpha_k)\phi_k^N(x) + \alpha_k\Big(f(y_k) + \langle \nabla f(y_k),x-y_k \rangle + \tfrac{\mu}2\tnorm{x-y_k}\Big) \label{eq:Nphik}
  \end{eqnarray}
  is an estimate sequence.
\end{lemma}
Combining \eqref{eq:zzzzz4}, with \eqref{eq:lowerboundDF} and \eqref{eq:normequivcomposite} shows that $\phi_{k+1}(x)$ in \eqref{eq:zzzzz4} is equivalent to $\phi_{k+1}^N(x)$ in \eqref{eq:Nphik} and therefore, \emph{the construction in this work is an estimate sequence} ($\phi_{k+1}(x) \equiv  \phi_{k+1}^N(x)$). However, we will now show that the construction does not satisfy \eqref{eq:NestUB}, and therefore, even though the iterates generated by SUESA/ASUESA form an estimate sequence, Lemma~\ref{lem:NestConvegence} \emph{cannot} be used to prove convergence of SUESA/ASUESA.

\begin{lemma}[Lemma~2.2.3 in \cite{Nesterov04}]\label{lem:NestCanonical}
Let $\phi_0^N(x) = (\phi_0^N)^* + \frac{\gamma_0^N}2\tnorm{x-v_0^N}$. Then the process described in Lemma~\ref{lem:N_ES} preserves the canonical form of functions $\{\phi_k^N(x)\}_{k=0}^{\infty}$:
  \begin{equation}\label{eq:nestcanonicalform}
  \phi_{k+1}^N(x) = (\phi_{k+1}^N)^* + \tfrac{\gamma_k^N}2\tnorm{x - v_{k+1}^N},
 \end{equation}
 where the sequences $\{\gamma_k^N\}_{k=0}^{\infty}$, $\{v_k^N\}_{k=0}^{\infty}$ and $\{(\phi_k^N)^*\}_{k=0}^{\infty}$ are defined as follows:
\begin{eqnarray}
  \gamma_{k+1}^N &=& (1-\alpha_k)\gamma_k^N + \alpha_k \mu,\label{eq:Nestgamma}\\
  v_{k+1}^N &=& \tfrac{1}{\gamma_{k+1}^N}\Big((1-\alpha_k)\gamma_k^Nv_k^N + \alpha_k \mu y_k - \alpha_k \nabla f(y_k)\Big),\label{eq:Nestvk}\\
  (\phi_{k+1}^N)^* &=& (1-\alpha_k)(\phi_k^N)^* + \alpha_k f(y_k) - \tfrac{\alpha_k^2}{2 \gamma_{k+1}^N}\tnorm{\nabla f(y_k)} \notag\\
  &&+ \tfrac{\alpha_k(1-\alpha_k)\gamma_k^N}{\gamma_{k+1}^N}\Big(\tfrac{\mu}{2}\tnorm{y_k - v_k^N} + \langle\nabla f(y_k), v_k^N - y_k\rangle\Big)\label{eq:Nestphik}
\end{eqnarray}
\end{lemma}
Note that in \eqref{eq:nestcanonicalform} the numerator in front of the norm term is $\gamma_k^N$, while in \eqref{eq:phicanonical} it is $\mu$. Substituting $\gamma_k^N = \mu$ into \eqref{eq:Nestgamma} gives $\gamma_{k+1}^N = \gamma_k^N = \mu$ so setting $\gamma_0^N = \mu$ ensures $\gamma_k^N$ is fixed for all $k\geq 0$ in Lemma~\ref{lem:NestCanonical}. Now, using $\gamma_{k+1}^N = \gamma_k^N = \mu$ in \eqref{eq:Nestvk}, and recalling the form of a long step shows that $v_{k+1}^N  \equiv v_{k+1}$ in \eqref{eq:vkcomp}. It remains to observe that \eqref{eq:phistarequiv} (combined with \eqref{eq:normequivcomposite}) is equivalent to \eqref{eq:Nestphik}.

Thus, the difference between the construction in this work and the construction in \cite{Nesterov04} comes down to the minimizer and minimal value of $\phi_0^{N}(x)$. For an ES, it must hold that $f(x_0)\leq (\phi_0^{N})^*$
, and the initialization of scheme (2.2.6) in \cite{Nesterov04}, as well as the proof of Theorem 2.2.1, explicitly mentioned the use of the choice $v_0 = x_0$ for Nesterov's method. However, note that other choices of $v_0$ can still provide the equality $(\phi_0^{N})^* = f(x_0)$, which means that the minimal value of the first element in the ES would be unchanged, i.e., $f(x_0)$, and the minimizer would be shifted from $x_0$ to other points. On the other hand, this contrasts with SUESA/ASUESA, where it is required that $\phi_0(x) \leq f(x)$ and so they are initialized with $v_0 = x_0^{++}$ and $\phi_0^*= f(x_0) - \tfrac{1}{2\mu}\tnorm{\nabla f(x_0)} \textcolor{black}{\leq f(x_0)}$; see \eqref{eq:c0v0}.

Finally, note that Definition~\ref{def:UES} also holds for composite functions. While Definition~\ref{def:NesterovES} only holds for smooth functions, Nesterov has extended the ES framework to the composite setting; see Section~4 in \cite{Nesterov08} and Section~4 in \cite{Nesterov13}. Moreover, the relationship between the OQA method and an ES is discussed Appendix A in \cite{Drusvyatskiy16}.

\textcolor{black}
 {
\section{Comparison of a UES and the study \cite{diakonikolas2019}}
In \cite{diakonikolas2019}, the authors proposed a general scheme for the analysis of first-order methods. For the strongly convex cases (both smooth and nonsmooth), there are some similarities between the methods in \cite{diakonikolas2019} and our proposed methods, but we stress that the approaches in this work are inherently different from those in \cite{diakonikolas2019}. To be more specific, Table \ref{tab:compareWithRev2Pape} summarizes the iterates of ASUESA, and compares them with the iterates (for the smooth and strongly convex setting) in \cite{diakonikolas2019}. The similarities and differences between the corresponding two studies are described as follows:
 \begin{enumerate}
     \item The $\beta_k$s are different for each method, and if the problem is ill-conditioned (large $\kappa$), then the $\beta_k$s are close to each other ($\approx 1-\dfrac{1}{\sqrt{\kappa}}$).
     \item The $y_k$s have the same update structure, but because the $\beta_k$s are different, this results in \emph{different updates} $y_k$ (later we will see that the $v_k$s and $x_k$s are also different as the algorithms progress). 
     \item The $x_{k+1}$s are identical in \emph{structure}. However, again the iterates $x_{k+1}$ are \emph{different} because the $\beta_k$s, and subsequently $y_k$s, are \emph{different} for both methods.
     \item The $v_{k+1}$s are different both in structure and clearly in their values.
     \item The $x_k$s in our proposed study form part of an underestimate sequence, which guarantees the natural stopping criteria, i.e., $f(x_k) - \phi_k^*$ goes to zero in a linear rate. 
 \end{enumerate}
 }
 \begin{table}[htb]
	\centering
	\caption{Comparison of the iterates}
	\begin{tabu}{l|c|c}
		\toprule
		\textbf{Iterates}  & \textbf{ASUESA (current study)}& \textbf{Diakonikolas  and  Orecchia[2019]} \\
		\midrule
		\rowfont{\color{black}} $\beta_k $& $\dfrac{1}{1+ \sqrt{1/ \kappa}}$& $1- \dfrac{\sqrt{4\kappa +1}-1}{2\kappa}$ \\\hdashline
		\rowfont{\color{black}}$y_k$& $\beta_k x_k + (1-\beta_k)v_k$ & $\beta_k x_k + (1-\beta_k)v_k$ \\\hdashline
		\rowfont{\color{black}}$x_{k+1}$& $y_k - \dfrac{1}{L} \nabla f(y_k)$ & $y_k - \dfrac{1}{L} \nabla f(y_k)$\\ \hdashline
		\rowfont{\color{black}}$v_{k+1}$& $(1-\alpha_k) v_k + (\alpha_k)(y_k -\dfrac{1}{\mu} \nabla f(y_k) )$ & $\sum a_i (-\mu y_i - \mu A^{(k+1)}\nabla f(y_i))$\\
		\bottomrule
	\end{tabu}
	\label{tab:compareWithRev2Pape}
\end{table} 

\end{document}